\documentclass[11pt,a4paper]{article}

\usepackage{inputenc}
\usepackage{amsmath}
\usepackage{bm}
\usepackage{bbold}
\usepackage{amsthm}
\usepackage{enumerate}

\usepackage[hyphens]{url}

\usepackage{hyperref}
\usepackage{breakurl}

\usepackage{tikz}\tikzset{x=1cm,y=1cm,z=1cm}
\usetikzlibrary{perspective}

\usepackage{pgfplots}\pgfplotsset{compat=1.16}

\title{Application of tropical optimization for solving multicriteria problems of pairwise comparisons using log-Chebyshev approximation\thanks{
Internat. J. Approx. Reason. 169, 109168 (2024)}
}

\author{N. Krivulin\thanks{Faculty of Mathematics and Mechanics, Saint Petersburg State University, 28 Universitetsky Ave., St.~Petersburg, 198504, Russia, nkk@math.spbu.ru.}
}

\date{}

\newtheorem{theorem}{Theorem}
\newtheorem{lemma}[theorem]{Lemma}

\theoremstyle{definition}



\begin{document}

\maketitle

\begin{abstract}
We consider a decision-making problem to find absolute ratings of alternatives that are compared in pairs under multiple criteria, subject to constraints in the form of two-sided bounds on ratios between the ratings. Given matrices of pairwise comparisons made according to the criteria, the problem is formulated as the log-Chebyshev approximation of these matrices by a common consistent matrix (a symmetrically reciprocal matrix of unit rank) to minimize the approximation errors for all matrices simultaneously. We rearrange the approximation problem as a constrained multiobjective optimization problem of finding a vector that determines the approximating consistent matrix. The problem is then represented in the framework of tropical algebra, which deals with the theory and applications of idempotent semirings and provides a formal basis for fuzzy and interval arithmetic. We apply methods and results of tropical optimization to develop a new approach for handling the multiobjective optimization problem according to various principles of optimality. New complete solutions in the sense of the max-ordering, lexicographic ordering and lexicographic max-ordering optimality are obtained, which are given in a compact vector form ready for formal analysis and efficient computation. We present numerical examples of solving multicriteria problems of rating four alternatives from pairwise comparisons to illustrate the technique and compare it with others.
\\

\textbf{Keywords:} idempotent semifield, tropical optimization, log-Che\-byshev approximation, constrained multiobjective optimization, multiple criteria evaluation.
\\

\textbf{MSC (2020):} 15A80, 90C24, 41A50, 90C29, 90B50
\end{abstract}

\section{Introduction}

Decision-making problems that are encountered in real-world applications often have to cope uncertain data and involve multiple objectives. To handle uncertainty in data in these problems, various approaches are adopted, including the applications of interval, fuzzy and possibilistic data models \cite{Bellman1970Decisionmaking,Dubois1980Fuzzy,Carlsson2011Possibility}. A key component of many approaches that aim to address uncertainty is the use of the arithmetics of idempotent semirings and semifields, which provides a bridge between these approaches and tropical mathematics. The interaction between tropical (idempotent) mathematics which concentrates on the theory and applications of algebraic systems with idempotent operations, and the fuzzy, interval and possibilistic mathematics appears to be bidirectional. On the one hand, fuzzy and possibilistic models have inspired the study of idempotent algebraic systems such as fuzzy algebra, {\L}ukasiewicz semirings and Viterbi semirings (see, e.g. \cite{Golan2003Semirings,Gondran2008Graphs,Dinola2015Semiring,Gavalec2015Decision}). On the other hand, the tropical mathematics offers a conceptual and analytical framework for computations in fuzzy, interval and other arithmetics that involve $\max$ and $\min$ operations. Examples of related recent studies include \cite{Gavalec2015Tropical,Valverdealbacete2015Spectra,Krivulin2019Tropical,Shitov2020Factoring}. The above observations indicate that the development and investigation of novel solutions in tropical mathematics can benefit the fuzzy sets theory and other research domains where idempotency plays an essential role to deal with uncertainty.

As another bridge between the tropical mathematics and the mathematics of uncertainty, we consider the concept of Chebyshev distance and related theory of Chebyshev approximation. In tropical semifields, Chebyshev distance is introduced in a direct way using only internal operations, which offers a strong potential for developing efficient techniques to solve optimization problems involving Chebyshev approximation. At the same time, the methods and techniques of Chebyshev approximation appear to be in growing demand as useful approach to solve optimization and other problems in fuzzy mathematics \cite{Dubois1999Computing,Liu2008Multiobjective,Li2010Chebyshev}, which makes the related results in tropical mathematics relevant.  

The known methods to address uncertain decision problems under multiple criteria include both crisp and fuzzy implementations of regular multiobjective optimization methods and heuristic algorithms \cite{Luhandjula1989Fuzzy,Buckley1990Multiobjective,Fiedler2006Linear,Elwahed2008Intelligent}. One of the approaches is to use methods and techniques of tropical optimization which is concerned with optimization problems that are formulated and solved in terms of tropical mathematics. Due to the clear connection between the fuzzy and tropical mathematics, the development of new solutions that are based on multiobjective tropical optimization suggests a promising resource to handle uncertain multicriteria decision problems, for example through a fuzzy implementation of the optimization method.

The problem of evaluating ratings (scores, priorities, weights) of alternatives based on pairwise comparisons under multiple criteria is an uncertain decision problem, where the uncertainty arises from subjective judgment errors and imprecise data of comparisons. This uncertainty leads to inconsistent or opposite results of rating and ranking alternatives, and needs to be managed to achieve acceptable solutions. The problem under consideration is widely occurring and highly demanded in decision making, and has been studied for decades in many researches including the classical paper by L.~L.~Thurstone \cite{Thurstone1927Law}. Given results of comparisons of decision alternatives in the form of pairwise comparison matrices, the problem is to find a vector of individual ratings of alternatives, which can be used to rank alternatives according to their ratings and thus provide a basis for optimal decisions. 

The solution techniques available for the problem apply various heuristic procedures that do not guarantee the optimal solution, but usually produce acceptable results in practice, and mathematical methods that are formally justified, but may be computationally very expensive. Along with the methods based on conventional algebra, there exist solutions that use interval arithmetic, fuzzy arithmetic and tropical algebra.

One of the most prominent approach to solve multicriteria pairwise comparison problems, which is known as the analytical hierarchy process, has been proposed in the 1970s by T.~L.~Saaty \cite{Saaty1977Scaling,Saaty1990Analytic,Saaty2013Onthemeasurement}. The approach employs the principal eigenvector method to derive a vector of individual ratings by calculating the eigenvectors of the pairwise comparison matrices, corresponding to their maximal eigenvalues (the Perron eigenvectors). In succeeding years the approach was extended to handle problems with interval and fuzzy pairwise comparison matrices \cite{Laarhoven1983Fuzzy,Buckley1985Fuzzy,Saaty1987Uncertainty,Salo1995Preference} (see also literature overviews in \cite{Krejci2018Pairwise,Ramik2020Pairwise}) and to implement tropical algebra \cite{Elsner2004Maxalgebra,Elsner2010Maxalgebra,Gursoy2013Analytic,Tran2013Pairwise,Gavalec2015Decision}.

Another widely used approach is the weighted geometric mean method, which is based on matrix approximation in the sense of the Euclidean metric in logarithmic scale \cite{Narasimhan1982Geometric,Crawford1985Note,Barzilai1987Consistent}. Under this approach, the individual ratings are obtained from a matrix of pairwise comparisons as the vector of geometric means of elements in the rows of the matrix. For interval and fuzzy extensions of the geometric mean method, one can consult \cite{Krejci2018Pairwise,Ramik2020Pairwise}.

Different solution approaches to the pairwise comparison problems may yield results where the acquired ratings produce different or contradictory orders of alternatives \cite{Belton1983Shortcoming,Barzilai1987Consistent,Ishizaka2006How,Mazurek2021Numerical}. Some of the approaches offer analytical solutions with low computational complexity, whereas the other are based on numerical algorithms that may have a sufficient computational cost. As a result, the development of new effective solutions of the problem to supplement and complement existing approaches remains relevant.

A technique that applies approximation of pairwise comparison matrices in Chebyshev metric in logarithmic scale (log-Chebyshev approximation) is proposed and developed in \cite{Krivulin2015Rating,Krivulin2016Using,Krivulin2019Tropical}. The technique leads to optimization problems that are formulated and solved in the framework of tropical algebra where addition is defined as maximum. Using methods and results of tropical optimization yields analytical solutions of the problem in a compact vector form ready for both formal analysis and straightforward computations. In contrast to the methods of principle eigenvector and geometric mean, which offer unique solutions to the problem of pairwise comparisons, the log-Chebyshev approximation approach can produce multiple solutions. At the same time, an important advantage of this approach is that it can be directly extended to solve pairwise comparison problems with constraints imposed on the individual ratings of alternatives, whereas both methods of principal eigenvector and geometric mean cannot accommodate additional constraints without considerable complication of solution.

The log-Chebyshev approximation technique differs from another existing approach based on tropical algebra, which is exploited in many works including \cite{Elsner2004Maxalgebra,Elsner2010Maxalgebra,Gursoy2013Analytic,Tran2013Pairwise,Goto2022Polyad}. This approach can be considered as a tropical analogue of the principal eigenvector method, where the ordinary Perron eigenvector is formally replaced by the tropical eigenvector of the pairwise comparison matrix. The solution obtained as the tropical eigenvector is known to be one of the solutions provided by the log-Chebyshev approximation. However, the tropical eigenvector as shown in \cite{Krivulin2016Using} can yield a less relevant solution in the context of the pairwise comparison problem than other vectors that are found using the approximation. Since the tropical analogue of the principal eigenvector method may miss better results provided by the log-Chebyshev approximation technique, it seems unnecessary to consider this method separately. 

Multicriteria pairwise comparison problems can be considered as optimization problems with multiple objectives and solved using methods and techniques available in multiobjective optimization \cite{Ehrgott2005Multicriteria,Luc2008Pareto,Benson2009Multiobjective,Nakayama2009Sequential,Ramesh2013Multiple}. The most common approach to solving multiobjective optimization problems is to obtain Pareto-optimal (nondominated) solutions that cannot be improved for any one objective without worsening at least one other objective. To obtain Pareto-optimal solutions, various techniques are applied to reduce the problem with a vector objective function to one or more problems with ordinary (scalar) objective functions. As examples, one can consider linear scalarization used in the weighted geometric mean method \cite{Crawford1985Note,Barzilai1987Consistent} and Chebyshev scalarization in the log-Chebyshev approximation method \cite{Krivulin2019Tropical}. Other approaches to solve multiobjective problems include max-ordering, lexicographic ordering and lexicographic max-ordering techniques \cite{Ehrgott2005Multicriteria}. 

In this paper, we consider a new constrained decision problem to find absolute ratings of alternatives that are compared in pairs under multiple criteria, subject to constraints in the form of two-sided bounds (box-constraints) on ratios between the ratings. Given matrices of pairwise comparisons according to the criteria, the problem is formulated as the log-Chebyshev approximation of these matrices by a common consistent matrix (a symmetrically reciprocal matrix of unit rank) to minimize the approximation errors for all matrices simultaneously \cite{Krivulin2019Tropical}. We rearrange the approximation problem as a constrained multiobjective optimization problem of finding a vector that determines the approximating consistent matrix. The constrained optimization problem is then represented in the framework of tropical algebra where addition is defined as taking maximum and multiplication is as usual.

We apply results and further develop methods of tropical optimization to handle the multiobjective optimization problem according to various principles of optimality. The results obtained include several statements proved as lemmas and theorems to provide a basis for formal derivation and justification of the solution procedures. We present numerical examples of solving multicriteria problems of rating four alternatives to illustrate the technique. Specifically, we show how the proposed techniques is used to solve the problem of selecting a plan for vacation from \cite{Saaty1977Scaling}, and compare results.

The novelty of this study is twofold and lies in the development of new solution methods and results that are important for both decision making and tropical optimization. First, we consider a new class of constrained multicriteria pairwise comparison problems where the individual ratings of alternatives are evaluated subject to box-constraints on the ratios of ratings. These constraints can describe predefined relations between ratings, derived from some prior knowledge (such as approved and adopted results of previous studies) independent of the current comparison data. We extend the solution obtained in \cite{Krivulin2021Algebraic} for a constrained bi-criteria problem to the problems with arbitrary fixed number of criteria. The constrained problems under study allow to take into account the increasing complexity of contemporary decision processes, yet they have received little or no attention in the literature. 

Furthermore, we develop a new technique to handle the above constrained problems, which combines the log-Chebyshev approximation of pairwise comparison matrices with multicriteria optimization based on the max-ordering, lexicographic ordering and lexicographic max-ordering principles of optimality. The solutions are given analytically in a compact parametric form that represents all solution vectors of ratings in a way ready for formal analysis and efficient computation. We observe that implementation of the lexicographic ordering and lexicographic max-ordering optimization schemes to solve multiobjective problems involves solving a series of constrained scalar optimization problems, where the constraints of a problem define a feasible set as the solution set of the previous problem in the series \cite{Ehrgott2005Multicriteria}. The need for this predetermines and justifies the application of the log-Chebyshev approximation which can solve constrained problems analytically, rather than the methods of the principal eigenvector and geometric mean, which are not able to address the constrained pairwise comparison problems in a direct way. 

Second, we present new results on the solution of tropical optimization problems, which are of independent interest. A theorem is derived that formally establishes the equivalence between the solution of a constrained tropical optimization problem, the solution of a linear vector inequality and a set of vectors given in parametric form. The theorem is obtained as a consequence of some known facts (see, e.g. \cite{Krivulin2015Extremal,Krivulin2015Multidimensional,Krivulin2017Direct,Krivulin2017Tropical}) combined into one result here for the first time to serve as a useful analytical tool for implementation in tropical optimization. Specifically, this result plays a key role in the construction and justification of solution procedures for the constrained multicriteria pairwise comparison problem under consideration. 

We derive new solutions for particular problems of maximizing and minimizing a function in the multiplicative form of the Hilbert seminorm under a normalization constraint. This result is used to deal with multiple solutions of log-Chebyshev approximation in the pairwise comparison problem, which can make it difficult to choose the optimal solution in decision making. We improve and simplify the technique developed in \cite{Krivulin2019Tropical} to handle the multiple solutions issue by finding two solution vectors that have the highest ratio (the best differentiating solution) and the lowest ratio (the worst differentiating solution) between ratings. For some problems, the technique may fail to overcome the issue and yield both best and worst differentiating solutions that are not unique. We propose a new approach, which in case of multiple solutions allows to reduce the number of best differentiating solutions and replace all worst differentiating solutions by one solution.

Finally, we develop new computational procedures to solve a class of constrained multiobjective tropical optimization problems according to various principles of optimality, which can find application in other practical contexts including temporal project scheduling and facility location problems.

The rest of the paper is organized as follows. We start with Section~\ref{S-SCPCP} where a constrained pairwise comparison problem with a single criterion and its solution based on the log-Chebyshev approximation are considered. In Section~\ref{S-MPCP}, we extend the pairwise comparison problem to take into account multiple criteria and describe various solution approaches. Section~\ref{S-PADNR} offers an outline of basic definitions and results of tropical max-algebra needed below. In Section~\ref{S-TOP}, we present solutions to tropical optimization problems, which are used to handle multicriteria pairwise comparison problems in Section~\ref{S-SMPCP}. Sections~\ref{S-IE} and \ref{S-SVPP} give numerical examples to illustrate the obtained results. Finally, Section~\ref{S-C} includes short concluding remarks.

\section{Single-Criterion Pairwise Comparison Problem}
\label{S-SCPCP}

Suppose that $n$ alternatives are compared in pairs, which results in an $(n\times n)$-matrix $\bm{C}=(c_{ij})$ of pairwise comparisons, where the entry $c_{ij}>0$ shows by how many times alternative $i$ is considered more preferable than alternative $j$. The pairwise comparison matrix $\bm{C}$ is assumed to satisfy the condition $c_{ij}=1/c_{ji}$ (and thus $c_{ii}=1$) to be valid for all $i,j=1,\ldots,n$, which makes the matrix $\bm{C}$ be symmetrically reciprocal. This condition says that if alternative $i$ is estimated to be $c_{ij}$ times better than $j$, then alternative $j$ must be $1/c_{ij}$ times better ($c_{ij}$ times worse) than $i$.

Given a pairwise comparison matrix $\bm{C}$ which represents the results of relative evaluation of one alternative against another, the problem of interest is to calculate absolute ratings of alternatives in the form of an $n$-vector $\bm{x}=(x_{j})$ where $x_{j}>0$ represents the rating of alternative $j$.

A pairwise comparison matrix $\bm{C}$ is referred to as consistent if the condition $c_{ij}=c_{ik}c_{kj}$ holds for all $i,j,k=1,\ldots,n$. This condition corresponds to the natural transitivity of judgments, which requires that if alternative $i$ is considered $c_{ik}$ times better than $k$, and alternative $k$ is $c_{kj}$ times better than $j$, then alternative $i$ should be $c_{ik}c_{kj}$ times better than $j$. 

For a consistent matrix $\bm{C}$, it is not difficult to verify that there exists a positive vector $\bm{x}=(x_{j})$ whose entries determine the entries of $\bm{C}$ by the relation $c_{ij}=x_{i}/x_{j}$ to be valid for all $i,j$, which means that the matrix $\bm{C}$ is of unit rank. Indeed, the transitivity condition $c_{ij}=c_{ik}c_{kj}$ implies that any two columns $j$ and $k$ in $\bm{C}$ are collinear, and thus we can write $\bm{C}=\bm{x}\bm{y}^{T}$ where $\bm{x}=(x_{j})$ and $\bm{y}=(y_{j})$ are positive column vectors.

Since each entry of the matrix $\bm{C}$ is given by $c_{ij}=x_{i}y_{j}$ with $x_{i}y_{i}=c_{ii}=1$, we have $y_{i}=1/x_{i}$ for all $i$, which yields the above relation. Moreover, it directly follows from this relation that the vector $\bm{x}$, which is defined up to a positive factor, can be taken as a vector of absolute ratings of alternatives and thus gives the solution of the pairwise comparison problem.

The matrices of pairwise comparisons that appear in real-world problems are commonly not consistent, which makes the problem of evaluating absolute ratings nontrivial. The solution techniques available to handle the problem include various heuristic methods that offer acceptable results in many cases in practice, and approximation methods that provide mathematically justified optimal solutions.

\subsection{Solution Approaches to Pairwise Comparison Problem}

The heuristic methods are mainly based on aggregating columns of the pairwise comparison matrix to obtain a solution by calculating a weighted sum of these columns \cite{Choo2004Common}. In the most widely used method of principal eigenvector \cite{Saaty1977Scaling,Saaty1990Analytic,Saaty2013Onthemeasurement}, the solution vector $\bm{x}$ is defined as the sum of columns taken with weights assumed proportional to the components of $\bm{x}$. Under this assumption, the vector $\bm{x}$ must satisfy the equation $\bm{C}\bm{x}=\lambda\bm{x}$ where $1/\lambda$ is a coefficient of proportionality, and it is found as the principal (Perron) eigenvector of the matrix $\bm{C}$ corresponding to the maximum eigenvalue.

The approximation methods reduce the problem to finding a consistent matrix $\bm{X}$ that approximates a given matrix $\bm{C}$ in the sense of some distance function as approximation error \cite{Choo2004Common}. Then, a positive vector $\bm{x}$ that determines the approximating consistent matrix is taken as a vector of absolute ratings of alternatives. The approximation problem is solved as an optimization problem of minimizing the distance between the matrices $\bm{C}$ and $\bm{X}$, which provides a formal justification of the solution obtained.

If the approximation error is measured on the standard linear scale, the approach normally leads to complex multiextremal optimization problems that are very hard to solve \cite{Saaty1984Comparison}. On the contrary, the application of the logarithmic scale (with logarithm to a base greater than $1$) makes the approximation problem more tractable and even allows the solution to be derived analytically in an exact explicit form.

A widespread approximation technique to solve the pairwise comparison problem measures the error between the matrices $\bm{C}=(c_{ij})$ and $\bm{X}=(x_{i}/x_{j})$ by using the Euclidean metric in logarithmic scale \cite{Narasimhan1982Geometric,Crawford1985Note,Barzilai1987Consistent}. This technique known as the method of geometric mean, involves finding a positive vector $\bm{x}=(x_{j})$ that solves the problem 
\begin{equation*}
\begin{aligned}
\min_{\bm{x}>\bm{0}}
&&&
\sum_{1\leq i,j\leq n}\left(\log c_{ij}-\log\frac{x_{i}}{x_{j}}\right)^{2}.
\end{aligned}
\end{equation*}

The standard solution approach, which applies the first derivative test to find the roots of the partial derivatives of the objective function with respect to all $x_{i}$, yields a solution vector $\bm{x}$ with the components given in the parametric form
\begin{equation*}
x_{i}
=
\left(
\prod_{j=1}^{n}c_{ij}
\right)^{1/n}
u,
\qquad
u>0,
\qquad
i=1,\ldots,n.
\end{equation*}

After normalization (e.g. by dividing by the sum of components), this vector is taken as the optimal solution of the pairwise comparison problem.

As another approximation technique, a method of minimizing the Chebyshev distance in logarithmic scale (a log-Chebyshev approximation method) is proposed in \cite{Krivulin2015Rating,Krivulin2016Using,Krivulin2019Tropical}. The method aims to find positive vectors $\bm{x}$ that solve the problem
\begin{equation}
\begin{aligned}
\min_{\bm{x}>\bm{0}}
&&&
\max_{1\leq i,j\leq n}\left|\log c_{ij}-\log\frac{x_{i}}{x_{j}}\right|.
\end{aligned}
\label{P-minx_maxijcijlogxixj}
\end{equation}

We observe that the methods of principal eigenvector and geometric mean lead to unique solutions and offer efficient computational procedures of calculating the result. At the same time, the solution provided by log-Chebyshev approximation can be nonunique and thus require further analysis. Considering an approximate character of the pairwise comparison model which usually leads to inconsistent results of pairwise comparisons, multiple solutions of the problem look quite reasonable and even allow to select a solution satisfying additional constraints.
 

\subsection{Log-Chebyshev Approximation of Pairwise Comparison Matrix}

Let us consider the problem of unconstrained log-Chebyshev approximation at \eqref{P-minx_maxijcijlogxixj}. Observing that the logarithm to a base greater than $1$ monotonically increases, the objective function is rewritten as (see, e.g. \cite{Krivulin2019Tropical})
\begin{equation*}
\max_{1\leq i,j\leq n}\left|\log c_{ij}-\log\frac{x_{i}}{x_{j}}\right|
=
\log\max_{1\leq i,j\leq n}\frac{c_{ij}x_{j}}{x_{i}}.
\end{equation*}

Since the logarithmic function on the right-hand side attains its maximum there where its argument is maximal, we remove the logarithm from the objective function. Moreover, it is not difficult to verify (see \cite{Elsner2004Maxalgebra,Elsner2010Maxalgebra,Krivulin2019Tropical}) that the obtained objective function is connected with the standard relative approximation error by the equality
\begin{equation*}  
\max_{1\leq i,j\leq n}\frac{c_{ij}x_{j}}{x_{i}}
=
\max_{1\leq i,j\leq n}\frac{|c_{ij}-x_{i}/x_{j}|}{c_{ij}}
+
1.
\end{equation*}
This means that the log-Chebyshev approximation of the matrix $\bm{C}$ is equivalent to the approximation in the sense of relative error as well.

Consider an extension of the pairwise comparison problem where the ratings of alternatives are to be evaluated subject to some predefined constraints. Suppose the constraints imposed on the ratings are in the form of two-sided bounds on ratios between the ratings. These constraints can reflect prior knowledge about the relationship between ratings, which can be obtained by different assessment techniques or resulted from the nature of alternatives.

Given a nonnegative matrix $\bm{B}=(b_{ij})$ where $b_{ij}>0$ shows that alternative $i$ must be considered not less than $b_{ij}$ times better than $j$, and $b_{ij}=0$ indicates that no constraint is imposed on $i$ with respect to $j$. The constraints are given by the inequalities $b_{ij}x_{j}\leq x_{i}$ for all $i,j=1,\ldots,n$, or equivalently by the double inequality $b_{ij}\leq x_{i}/x_{j}\leq1/b_{ji}$ when $b_{ji}>0$. Combining the former inequalities for all $j$ yields the system of inequalities 
\begin{equation*}
\max_{1\leq j\leq n}b_{ij}x_{j}
\leq
x_{i},
\qquad
i=1,\ldots,n.
\end{equation*}

We incorporate the constraints into \eqref{P-minx_maxijcijlogxixj} to formulate the following problem of constrained log-Chebyshev approximation. Given a positive $(n\times n)$-matrix $\bm{C}=(c_{ij})$ of pairwise comparisons and nonnegative $(n\times n)$-matrix $\bm{B}=(b_{ij})$ of constraints, the problem is to find positive $n$-vectors $\bm{x}=(x_{j})$ that achieve
\begin{equation}
\begin{aligned}
\min_{\bm{x}>\bm{0}}
&&&
\max_{1\leq i,j\leq n}\frac{c_{ij}x_{j}}{x_{i}};
\\
\text{s.t.}
&&&
\max_{1\leq j\leq n}b_{ij}x_{j}
\leq
x_{i},
\qquad
i=1,\ldots,n.
\end{aligned}
\label{P-minx_maxijcijxixj-maxjbijxjleqxi}
\end{equation}

Note that multiplying all components of the vector $\bm{x}$ by a common positive factor does not change both objective function and inequality constraints in problem \eqref{P-minx_maxijcijxixj-maxjbijxjleqxi}, and hence the solutions of the problem are scale-invariant.

Finally, we observe that the methods of principal eigenvector and geometric mean cannot accommodate the above constraints in a straightforward way without considerable modification and complication of the solution. As it is shown later, the constrained problem of log-Chebyshev approximation at \eqref{P-minx_maxijcijxixj-maxjbijxjleqxi} can be directly solved to give the result in a compact vector form.

\subsection{Best and Worst Differentiating Solutions}

If problem \eqref{P-minx_maxijcijxixj-maxjbijxjleqxi} has a unique solution up to a positive factor, this solution is taken as the vector of absolute ratings of alternatives in question.

Suppose that the solution of \eqref{P-minx_maxijcijxixj-maxjbijxjleqxi} is not unique and denote the set of obtained solution vectors $\bm{x}$ by
\begin{equation*}
X
=
\arg\min_{\bm{x}>\bm{0}}
\left\{
\max_{1\leq i,j\leq n}\frac{c_{ij}x_{j}}{x_{i}}\
:\
\max_{1\leq j\leq n}b_{ij}x_{j}
\leq
x_{i},
\quad
i=1,\ldots,n
\right\}.
\end{equation*}

A nonunique solution vector of the pairwise comparison problem may make it difficult to derive an optimal decision, and thus further analysis is needed to characterize the result by one or two vectors that are reasonably representative of the solution. An approach developed in \cite{Krivulin2019Tropical} suggests to reduce the entire set of solutions to two vectors that provide the ``best'' and ``worst'' answers to the questions of which alternative has the highest rating and how much this rating differs from the ratings of other alternatives. As the best solution, the approach takes a vector that maximally differentiates the alternatives with the highest and lowest ratings, and as the worst a vector that minimally differentiates all alternatives. The difference between alternatives is measured by the ratio in the form of a multiplicative version of the Hilbert (span, range) seminorm of the vector $\bm{x}$, which is given by
\begin{equation}
\max_{1\leq i\leq n}x_{i}
\Big/
\min_{1\leq j\leq n}x_{j}
=
\max_{1\leq i\leq n}x_{i}
\times
\max_{1\leq j\leq n}x_{j}^{-1}.
\label{E-maxximinxi}
\end{equation}

The best and worst differentiating solutions are then obtained by maximizing and minimizing the Hilbert seminorm over all vectors $\bm{x}\in X$, which leads to the optimization problems
\begin{equation*}
\begin{aligned}
\max_{\bm{x}\in X}
&&&
\max_{1\leq i\leq n}x_{i}\times\max_{1\leq j\leq n}x_{j}^{-1};
\end{aligned}
\qquad\qquad
\begin{aligned}
\min_{\bm{x}\in X}
&&&
\max_{1\leq i\leq n}x_{i}\times\max_{1\leq j\leq n}x_{j}^{-1}.
\end{aligned}
\end{equation*}

As examples show (see, e.g. \cite{Krivulin2019Tropical}), one of the shortcomings of this approach is that the best (worst) differentiating solution found in this way may not be unique. To overcome this possible issue, we now propose a new improved procedure of finding the best and worst differentiating solutions, which provides a reduced set of the best solutions and a unique worst solution. 

Since the solutions of problem \eqref{P-minx_maxijcijxixj-maxjbijxjleqxi} are invariant under multiplication by a positive factor, we can restrict ourselves to vectors that are normalized by dividing by the maximum component. Adding the normalization condition transforms the last two problems into the problems
\begin{equation}
\begin{aligned}
\max_{\bm{x}\in X}
&&&
\max_{1\leq i\leq n}x_{i}^{-1};
\\
\text{s.t.}
&&&
\max_{1\leq i\leq n}x_{i}=1;
\end{aligned}
\qquad\qquad
\begin{aligned}
\min_{\bm{x}\in X}
&&&
\max_{1\leq i\leq n}x_{i}^{-1};
\\
\text{s.t.}
&&&
\max_{1\leq i\leq n}x_{i}=1.
\end{aligned}
\label{P-maxmaxi1xi-maxixi1-minmaxi1xi-maxixi1}
\end{equation}

Furthermore, we note that both problems can have nonunique normalized solutions. Under this circumstance, it is reasonable to concentrate only on the minimal normalized solution to the maximization problem and the maximal normalized solution to the minimization problem. As usual, a solution $\bm{x}_{0}$ is called minimal (maximal) if the componentwise inequality $\bm{x}_{0}\leq\bm{x}$ ($\bm{x}_{0}\geq\bm{x}$) holds for any solution $\bm{x}$.

Indeed, all normalized vectors that maximize or minimize the ratio between the highest and lowest ratings have two common components whose ratio is fixed: the maximum component equal to 1, and the minimum component less or equal to 1. In this case, the lower (higher) the ratings of the other alternatives, the better (worse) the alternative with the maximum rating is distinguishable from the others.

\section{Multicriteria Pairwise Comparison Problems}
\label{S-MPCP}

We now assume that $n$ alternatives are compared in pairs according to $m$ criteria. For each criterion $l=1,\ldots,m$, the results of pairwise comparisons are represented by a matrix $\bm{C}_{l}=(c_{ij}^{(l)})$. The problem is to assess the alternatives by evaluating a vector $\bm{x}=(x_{j})$ of ratings subject to constraints given by a matrix $\bm{B}=(b_{ij})$. Application of the log-Chebyshev approximation technique yields the following constrained multiobjective problem:
\begin{equation}
\begin{aligned}
\min_{\bm{x}>\bm{0}}
&&&
\left(
\max_{1\leq i,j\leq n}\frac{c_{ij}^{(1)}x_{j}}{x_{i}},
\ldots,
\max_{1\leq i,j\leq n}\frac{c_{ij}^{(m)}x_{j}}{x_{i}}
\right);
\\
\text{s.t.}
&&&
\max_{1\leq j\leq n}b_{ij}x_{j}
\leq
x_{i},
\qquad
i=1,\ldots,n.
\end{aligned}
\label{P-minx_maxijc1ijxixj_maxijcmijxixj-maxbijxjleqxi}
\end{equation}

Suppose we have a solution to the problem, which forms a nonempty set of solution vectors. If the solution obtained is a unique vector up to a positive factor, we normalize this vector by dividing by the maximum element and take its entries as absolute ratings of alternatives in the pairwise comparison problem. Otherwise, the best and worst differentiating vectors of ratings are obtained respectively as the minimal solution of the maximization problem and the maximal solution of the minimization problem at \eqref{P-maxmaxi1xi-maxixi1-minmaxi1xi-maxixi1}.

In the rest of this section, we consider three common approaches to handle problem \eqref{P-minx_maxijc1ijxixj_maxijcmijxixj-maxbijxjleqxi}, which results in the development of new procedures to find the solution set. The proposed solutions follow the max-ordering, lexicographic ordering and lexicographic max-ordering principles of optimality \cite{Ehrgott2005Multicriteria}.

\subsection{Max-Ordering Solution}

The max-ordering optimization aims at minimizing the worst value of the scalar objective functions in the multiobjective problem. For each $\bm{x}$, the approach considers that function which takes the maximal (worst) value. This leads to the replacement of the vector objective function by a scalar function given by the maximum component of the vector function, which is known as the Chebyshev scalarization approach in multiobjective optimization \cite{Nakayama2009Sequential}.

The solution of the constrained problem at \eqref{P-minx_maxijc1ijxixj_maxijcmijxixj-maxbijxjleqxi} starts with the introduction of the feasible solution set $X_{0}$ given by the constraints as
\begin{equation}
X_{0}
=
\left\{\bm{x}>\bm{0}\
:\
\max_{1\leq j\leq n}b_{ij}x_{j}
\leq
x_{i},
\quad
i=1,\ldots,n
\right\}.
\label{E-S0}
\end{equation}

We apply the Chebyshev scalarization and use the associativity of the maximum operation to form the scalar objective function
\begin{equation*}
\max_{1\leq l\leq m}
\max_{1\leq i,j\leq n}\frac{c_{ij}^{(l)}x_{j}}{x_{i}}
=
\max_{1\leq i,j\leq n}\frac{a_{ij}x_{j}}{x_{i}},
\qquad
a_{ij}
=
\max_{1\leq l\leq m}c_{ij}^{(l)}.
\end{equation*}

Then, the problem reduces to the minimization problem
\begin{equation}
\begin{aligned}
\min_{\bm{x}\in X_{0}}
&&&
\max_{1\leq i,j\leq n}\frac{a_{ij}x_{j}}{x_{i}},
\end{aligned}
\label{P-minx_maxijaijxixj}
\end{equation}
which is solved to obtain the max-ordering solution in the form of the set
\begin{equation*}
X_{1}
=
\arg\min_{\bm{x}\in X_{0}}
\max_{1\leq i,j\leq n}\frac{a_{ij}x_{j}}{x_{i}}.
\end{equation*}

Note that the solution obtained by the max-ordering optimization is known to be week Pareto-optimal and become Pareto-optimal if unique \cite{Nakayama2009Sequential}. 

If the solution is not unique (up to a positive factor), we further reduce the solution set by extracting the best and worst differentiating vectors of ratings given by the minimal and maximal normalized solutions of problems \eqref{P-maxmaxi1xi-maxixi1-minmaxi1xi-maxixi1} over the set $X_{1}$.

\subsection{Lexicographic Ordering Solution}

The lexicographic optimization examines the scalar objective functions in a hierarchical order based on certain ranking of objectives. Suppose the objectives are numbered in such a way that objective $1$ has the highest rank, objective $2$ has the second highest and so on until objective $m$ having the lowest rank. The lexicographic approach first solves the problem of minimizing function $1$ and examines the set of optimal solutions obtained. If the solution is unique, it is taken as the solution of the overall multiobjective problem. Otherwise function $2$ is minimized over all solutions of the first problem, and the procedure continues until a unique solution is obtained or the problem with function $m$ is solved. 

To apply this approach to problem \eqref{P-minx_maxijc1ijxixj_maxijcmijxixj-maxbijxjleqxi}, we first take the set $X_{0}$ given by \eqref{E-S0}, and then successively obtain the solution sets $X_{s}$ for each problem
\begin{equation}
\begin{aligned}
\min_{\bm{x}\in X_{s-1}}
&&&
\max_{1\leq i,j\leq n}\frac{c_{ij}^{(s)}x_{j}}{x_{i}},
\qquad
s=1,\ldots,m.
\end{aligned}
\label{P-minx_maxijcijsxixj}
\end{equation}

The solution procedure stops at step $s<m$ if the set $X_{s}$ consists of a single solution vector, or at step $s=m$ otherwise. The last found set $X_{s}$ is taken as the lexicographic solution for the entire problem. In the same way as before, if the solution given by $X_{s}$ is not unique, it is reduced to the minimal and maximal solutions of respective problems at \eqref{P-maxmaxi1xi-maxixi1-minmaxi1xi-maxixi1} over $X_{s}$.

\subsection{Lexicographic Max-Ordering Solution}

This approach combines the lexicographic ordering and max-ordering into one procedure that improves the accuracy of the assessment provided by the max-ordering approach. The procedure consists of several steps, each of which finds the max-ordering solution of a reduced problem that has a lower multiplicity of objectives and smaller feasible set. The first solution step coincides with the above described max-ordering solution of the constrained problem with $m$ objectives and the feasible solution set given by the constraints. Each subsequent step takes the solution from the previous step as the current feasible set and selects objectives that can be further minimized over this set, to incorporate into a current vector objective function. A scalar objective function is included if it has its minimum value over the current feasible set below the minimum of the objective function at the previous step. 

We denote by $I_{s}$ the set of indices of scalar objective functions that form components of the vector objective function, and by $X_{s}$ the solution set obtained at step $s$. We initially set $I_{0}=\{1,\ldots,m\}$ and define $X_{0}$ as in \eqref{E-S0}.

We find a solution set $X_{s}$ by solving the problem
\begin{equation}
\begin{aligned}
\min_{\bm{x}\in X_{s-1}}
&&&
\max_{l\in I_{s-1}}
\max_{1\leq i,j\leq n}\frac{c_{ij}^{(l)}x_{j}}{x_{i}},
\qquad
s=1,\ldots,m.
\end{aligned}
\label{P-minx_maxkl1maxijcijlxixj}
\end{equation}

With the minimum value of the objective function at step $s$ denoted by $\theta_{s}$, we define the index set
\begin{equation}
I_{s}
=
\left\{
l\in I_{s-1}
:\
\theta_{s}
>
\min_{\bm{x}\in X_{s}}
\max_{1\leq i,j\leq n}\frac{c_{ij}^{(l)}x_{j}}{x_{i}}
\right\}.
\label{E-Is}
\end{equation}

The procedure is completed at step $s<m$ if either the set $X_{s}$ reduces to a single solution vector or the condition $I_{s}=\emptyset$ holds. We additionally solve optimization problems \eqref{P-maxmaxi1xi-maxixi1-minmaxi1xi-maxixi1} if the final set $X_{s}$ provides a nonunique solution.

Below we show how the solutions offered by the above three approaches can be directly represented in explicit analytical form using methods and result of tropical mathematics.

\section{Preliminary Algebraic Definitions, Notation and Results}
\label{S-PADNR}

In this section, we offer a brief overview of definitions, notation and results of tropical algebra that are used in the sequel. For further detail on the theory and application of tropical mathematics, one can consult the resent monographs and textbooks \cite{Golan2003Semirings,Heidergott2006Maxplus,Gondran2008Graphs,Maclagan2015Introduction}. 

\subsection{Idempotent Semifield}

Consider the set $\mathbb{R}_{+}$ of nonnegative reals with two binary operations: addition denoted by $\oplus$ and defined as maximum, and multiplication denoted and defined as usual. Both operations are associative and commutative, and multiplication distributes over addition. Addition is idempotent since $x\oplus x=\max(x,x)=x$ for all $x\in\mathbb{R}_{+}$, and thus not invertible. It induces a partial order by the rule: $x\leq y$ if and only if $x\oplus y=y$, which is in agreement with the natural linear order on $\mathbb{R}_{+}$. With these properties, the system $\langle\mathbb{R}_{+},0,\oplus\rangle$ forms an idempotent Abelian semigroup with identity.

With usual multiplication, the system $\langle\mathbb{R}_{+},1,\times\rangle$ has the structure of an Abelian group where powers with rational (and even real) exponents are well defined, which means that this system is radicable (algebraically complete). In what follows, the multiplication sign $\times$ is omitted as in the standard algebra to save writing. Under the above properties, the system $\langle\mathbb{R}_{+},0,1,\oplus,\times\rangle$ is commonly classified as a linearly ordered idempotent radicable semifield and referred to as max-algebra.

Both addition and multiplication are monotone in each argument: if the inequality $x\leq y$ holds for some $x,y\in\mathbb{R}_{+}$, then the inequalities $x\oplus z\leq y\oplus z$ and $xz\leq yz$ are valid for any $z\in\mathbb{R}_{+}$ as well. Furthermore, it results from the inequality $x\leq y$ with $x,y\ne0$ that $x^{q}\geq y^{q}$ if $q\leq0$ and $x^{q}\leq y^{q}$ if $q>0$.

\subsection{Matrix and Vector Algebra}

Matrices over max-algebra are introduced in the usual way; addition and multiplication of compatible matrices as well as multiplication of matrices by scalars follow the standard entrywise rules where the arithmetic addition $+$ is replaced by $\oplus=\max$ (whereas the multiplication does not change). The monotonicity of scalar operations extends to the matrix operations, where the inequalities are understood entrywise.

The transpose of a matrix $\bm{A}$ is denoted by $\bm{A}^{T}$. The matrices which consist of one column are vectors, and of one row are transposed vectors. The set of matrices of $m$ rows and $n$ columns is denoted by $\mathbb{R}_{+}^{m\times n}$, and the set of column vectors with $n$ entries is by $\mathbb{R}_{+}^{n}$.

The zero matrix denoted by $\bm{0}$ and the identity matrix denoted by $\bm{I}$ are defined in the same way as in conventional algebra. A matrix without zero rows (columns) is called row-regular (column-regular), and a vector without zero entries is called regular. The matrices without zero entries and regular vectors are also called positive.

A vector $\bm{y}$ is collinear to vector $\bm{x}$, if $\bm{y}=c\bm{x}$ for some $c\in\mathbb{R}_{+}$.

For any nonzero matrix $\bm{A}=(a_{ij})$ from the set $\mathbb{R}_{+}^{m\times n}$, its multiplicative conjugate transpose (or simply conjugate) is the matrix $\bm{A}^{-}=(a_{ij}^{-})$ in $\mathbb{R}_{+}^{n\times m}$ where $a_{ij}^{-}=a_{ji}^{-1}$ if $a_{ji}\ne0$, and $a_{ij}^{-}=0$ otherwise. If a matrix $\bm{A}$ is row-regular, then the following inequality obviously holds:
\begin{equation*}
\bm{A}\bm{A}^{-}
\geq
\bm{I}.
\end{equation*}

For any nonzero vector $\bm{x}=(x_{j})$ from $\mathbb{R}_{+}^{n}$, its conjugate is the row vector $\bm{x}^{-}=(x_{j}^{-})$ where $x_{j}^{-}=x_{j}^{-1}$ if $x_{j}\ne0$, and $x_{j}^{-}=0$ otherwise. If a vector $\bm{x}$ is regular, then the following matrix inequality and scalar equality are valid:
\begin{equation*}
\bm{x}\bm{x}^{-}
\geq
\bm{I},
\qquad
\bm{x}^{-}\bm{x}
=
1
\end{equation*}
(where the equality holds for any nonzero vector $\bm{x}$).

For any square matrix $\bm{A}=(a_{ij})$ from $\mathbb{R}_{+}^{n\times n}$, the nonnegative integer powers represent repeated multiplication of the matrix by itself, defined by $\bm{A}^{0}=\bm{I}$ and $\bm{A}^{p}=\bm{A}\bm{A}^{p-1}$ for all integer $p>0$. The trace of $\bm{A}$ is given by
\begin{equation*}
\mathop\mathrm{tr}\bm{A}
=
a_{11}\oplus\cdots\oplus a_{nn}
=
\bigoplus_{k=1}^{n}a_{kk}.
\end{equation*}

For any matrix $\bm{A}\in\mathbb{R}_{+}^{n\times n}$, a trace function is introduced to serve as a tropical analogue of the matrix determinant in the form
\begin{equation*}
\mathop\mathrm{Tr}(\bm{A})
=
\mathop\mathrm{tr}\bm{A}
\oplus\cdots\oplus
\mathop\mathrm{tr}\bm{A}^{n}
=
\bigoplus_{k=1}^{n}\mathop\mathrm{tr}\bm{A}^{k}.
\end{equation*}

If the condition $\mathop\mathrm{Tr}(\bm{A})\leq1$ holds, then the Kleene star operator is defined to map the matrix $\bm{A}$ into the matrix
\begin{equation*}
\bm{A}^{\ast}
=
\bm{I}\oplus\bm{A}\oplus\cdots\oplus\bm{A}^{n-1}
=
\bigoplus_{k=0}^{n-1}\bm{A}^{k}.
\end{equation*}

Moreover, under the same condition, the inequality
\begin{equation*}
\bm{A}^{\ast}
\geq
\bm{A}^{p},
\end{equation*}
is valid for all integer $p\geq0$, from which it follows directly that
\begin{equation*}
\bm{A}^{\ast}\bm{A}^{\ast}
=
\bm{A}^{\ast}.
\end{equation*}

The spectral radius of a matrix $\bm{A}\in\mathbb{R}_{+}^{n\times n}$ is given by
\begin{equation*}
\lambda
=
\mathop\mathrm{tr}\bm{A}
\oplus\cdots\oplus
\mathop\mathrm{tr}\nolimits^{1/n}(\bm{A}^{n})
=
\bigoplus_{k=1}^{n}
\mathop\mathrm{tr}\nolimits^{1/k}(\bm{A}^{k}).
\end{equation*}

Consider a matrix $\bm{A}=(a_{ij})$ in $\mathbb{R}_{+}^{m\times n}$ and vector $\bm{x}=(x_{j})$ in $\mathbb{R}_{+}^{n}$. With the notation $\bm{1}=(1,\ldots,1)^{T}$, tropical analogues of the matrix and vector norms are defined as 
\begin{equation*}
\|\bm{A}\|
=
\bm{1}^{T}\bm{A}\bm{1}
=
\bigoplus_{i=1}^{m}\bigoplus_{j=1}^{n}a_{ij},
\qquad
\|\bm{x}\|
=
\bm{1}^{T}\bm{x}
=
\bm{x}^{T}\bm{1}
=
\bigoplus_{j=1}^{n}x_{j}.
\end{equation*}

\subsection{Vector Inequalities}

We conclude the overview with two inequalities that appear in the derivation of the solution of optimization problems in what follows. Suppose $\bm{A}\in\mathbb{R}_{+}^{m\times n}$ is a given matrix and $\bm{d}\in\mathbb{R}_{+}^{m}$ is a given vector, and consider the problem to find all vectors $\bm{x}\in\mathbb{R}_{+}^{n}$ that satisfy the inequality
\begin{equation}
\bm{A}\bm{x}
\leq
\bm{d}.
\label{I-Axleqd}
\end{equation}

Solutions of \eqref{I-Axleqd} are known under various assumptions in different forms. We use a solution suggested by the following assertion (see, e.g. \cite{Krivulin2015Extremal}).
\begin{lemma}
\label{L-Axleqd}
For any column-regular matrix $\bm{A}$ and regular vector $\bm{d}$, all solutions of inequality \eqref{I-Axleqd} are given by the inequality $\bm{x}\leq(\bm{d}^{-}\bm{A})^{-}$.
\end{lemma}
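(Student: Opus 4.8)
The plan is to establish the two inclusions that together characterize the solution set of \eqref{I-Axleqd}, namely that every solution $\bm{x}$ of $\bm{A}\bm{x}\leq\bm{d}$ satisfies $\bm{x}\leq(\bm{d}^{-}\bm{A})^{-}$, and conversely that every vector $\bm{x}$ with $\bm{x}\leq(\bm{d}^{-}\bm{A})^{-}$ solves the inequality. The key technical fact to exploit is the adjunction-type relation between left multiplication by $\bm{A}$ and the conjugate transpose operation: multiplying by a conjugate transpose turns an inequality around, and combined with monotonicity of the max-plus matrix product this yields both directions.

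First I would treat necessity. Suppose $\bm{A}\bm{x}\leq\bm{d}$. Since $\bm{d}$ is regular, $\bm{d}^{-}$ has no zero entries on the support of $\bm{d}$, and left-multiplying by the row-oriented conjugate $\bm{d}^{-}$ preserves the inequality by monotonicity of multiplication, giving $\bm{d}^{-}\bm{A}\bm{x}\leq\bm{d}^{-}\bm{d}=1$ (the last equality by the scalar identity $\bm{d}^{-}\bm{d}=1$ for nonzero $\bm{d}$ recorded in the preliminaries). Now $\bm{d}^{-}\bm{A}\bm{x}\leq1$ is a scalar inequality of the form $(\bm{d}^{-}\bm{A})\bm{x}\leq1$, and since $\bm{A}$ is column-regular the row vector $\bm{d}^{-}\bm{A}$ is regular; applying the elementary observation that $\bm{u}\bm{x}\leq1$ for a regular row vector $\bm{u}$ forces $\bm{x}\leq\bm{u}^{-}$ (entrywise, because the $j$-th coordinate gives $u_{j}x_{j}\leq1$, hence $x_{j}\leq u_{j}^{-1}$) with $\bm{u}=\bm{d}^{-}\bm{A}$ yields $\bm{x}\leq(\bm{d}^{-}\bm{A})^{-}$.

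For sufficiency, suppose $\bm{x}\leq(\bm{d}^{-}\bm{A})^{-}$. By monotonicity of the matrix product, $\bm{A}\bm{x}\leq\bm{A}(\bm{d}^{-}\bm{A})^{-}$, so it suffices to show $\bm{A}(\bm{d}^{-}\bm{A})^{-}\leq\bm{d}$. Setting $\bm{u}=\bm{d}^{-}\bm{A}$, the inequality $\bm{u}\bm{u}^{-}\leq1$ holds componentwise in the relevant sense; more directly, one computes entrywise that for each $i$, $(\bm{A}(\bm{d}^{-}\bm{A})^{-})_{i}=\bigoplus_{j}a_{ij}(\bigoplus_{k}d_{k}^{-1}a_{kj})^{-1}\leq\bigoplus_{j}a_{ij}(d_{i}^{-1}a_{ij})^{-1}=\bigoplus_{j}d_{i}=d_{i}$, where the inequality uses that the maximum in the denominator is at least the $k=i$ term. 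This gives $\bm{A}\bm{x}\leq\bm{d}$, completing the proof.

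The main obstacle, such as it is, lies in handling the zero entries carefully: the column-regularity of $\bm{A}$ is exactly what guarantees $\bm{d}^{-}\bm{A}$ is regular so that $(\bm{d}^{-}\bm{A})^{-}$ is a genuine (finite-valued) vector and the passage from $\bm{u}\bm{x}\leq1$ to $\bm{x}\leq\bm{u}^{-}$ is valid coordinatewise; and regularity of $\bm{d}$ is what makes $\bm{d}^{-}\bm{d}=1$ and keeps the necessity argument from degenerating. Once the support conditions are tracked, the rest is a routine manipulation with monotonicity and the defining properties of the conjugate transpose, all of which are available from the preliminary section.
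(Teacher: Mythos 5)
Your proof is correct. The paper itself does not prove Lemma~\ref{L-Axleqd}; it only cites an external reference, and your two-inclusion argument (left-multiplying by $\bm{d}^{-}$ and using $\bm{d}^{-}\bm{d}=1$ for necessity, and the entrywise bound $\bm{A}(\bm{d}^{-}\bm{A})^{-}\leq\bm{d}$ via the $k=i$ term for sufficiency) is exactly the standard derivation found there, with the zero-entry cases handled appropriately by column-regularity of $\bm{A}$ and regularity of $\bm{d}$.
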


As a consequence of the lemma, the solution of the equation $\bm{A}\bm{x}=\bm{d}$, if exists, can be represented as a family of solutions each defined by the vector inequality $\bm{x}\leq(\bm{d}^{-}\bm{A})^{-}$ where one scalar inequality is replaced by an equality.

Assume that for a given square matrix $\bm{B}\in\mathbb{R}_{+}^{n\times n}$, we need to find all regular solutions $\bm{x}\in\mathbb{R}_{+}^{n}$ to the inequality 
\begin{equation}
\bm{B}\bm{x}
\leq
\bm{x}.
\label{I-Bxleqx}
\end{equation}

A complete solution of \eqref{I-Bxleqx} can be obtained as follows \cite{Krivulin2015Extremal,Krivulin2015Multidimensional}.
\begin{lemma}
\label{L-Bxleqx}
For any matrix $\bm{B}$, the following statements are true.
\begin{enumerate}
\item
If $\mathop\mathrm{Tr}(\bm{B})\leq1$, then all regular solutions of inequality \eqref{I-Bxleqx} are given in parametric form by $\bm{x}=\bm{B}^{\ast}\bm{u}$ where $\bm{u}\ne\bm{0}$ is a vector of parameters.
\item
If $\mathop\mathrm{Tr}(\bm{B})>1$, then there is no regular solution.
\end{enumerate}
\end{lemma}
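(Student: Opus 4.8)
The plan is to split the statement into its two cases and, for the first, to prove both inclusions: every vector of the form $\bm{x}=\bm{B}^{\ast}\bm{u}$ with $\bm{u}\ne\bm{0}$ solves \eqref{I-Bxleqx}, and conversely every regular solution can be written this way. Throughout I would rely only on the properties of the Kleene star recalled in the excerpt, namely that under $\mathop\mathrm{Tr}(\bm{B})\leq1$ the matrix $\bm{B}^{\ast}$ is defined and satisfies $\bm{B}^{\ast}\geq\bm{I}$, $\bm{B}^{\ast}\geq\bm{B}^{p}$ for all integer $p\geq0$, and $\bm{B}^{\ast}\bm{B}^{\ast}=\bm{B}^{\ast}$, together with the monotonicity of matrix multiplication.

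First I would check sufficiency. Since $\bm{B}\leq\bm{B}^{\ast}$, monotonicity gives $\bm{B}\bm{B}^{\ast}\leq\bm{B}^{\ast}\bm{B}^{\ast}=\bm{B}^{\ast}$; multiplying on the right by $\bm{u}$ yields $\bm{B}(\bm{B}^{\ast}\bm{u})\leq\bm{B}^{\ast}\bm{u}$, so $\bm{x}=\bm{B}^{\ast}\bm{u}$ satisfies \eqref{I-Bxleqx}. Moreover $\bm{B}^{\ast}\geq\bm{I}$ forces $\bm{x}\geq\bm{u}$, hence $\bm{x}$ is nonzero whenever $\bm{u}$ is; the regular solutions among these are exactly those obtained for the parameter vectors $\bm{u}$ making $\bm{B}^{\ast}\bm{u}$ regular.

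For necessity, let $\bm{x}$ be any regular solution. Applying $\bm{B}$ repeatedly to $\bm{B}\bm{x}\leq\bm{x}$ and using monotonicity, one gets $\bm{B}^{k}\bm{x}\leq\bm{B}^{k-1}\bm{x}\leq\cdots\leq\bm{x}$ for every $k\geq1$, while $\bm{B}^{0}\bm{x}=\bm{x}$ trivially. Summing these inequalities over $k=0,\dots,n-1$ gives $\bm{B}^{\ast}\bm{x}=\bigoplus_{k=0}^{n-1}\bm{B}^{k}\bm{x}\leq\bm{x}$, and combining with $\bm{B}^{\ast}\bm{x}\geq\bm{I}\bm{x}=\bm{x}$ forces $\bm{x}=\bm{B}^{\ast}\bm{x}$. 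Thus $\bm{x}$ has the stated form with $\bm{u}=\bm{x}\ne\bm{0}$, which finishes the first claim. For the second claim I would argue by contradiction: if $\bm{x}$ were a regular solution, the same iteration gives $\bm{B}^{k}\bm{x}\leq\bm{x}$ for all $k\geq1$; reading the $i$th component, $\bigoplus_{j}(\bm{B}^{k})_{ij}x_{j}\leq x_{i}$, so in particular $(\bm{B}^{k})_{ii}x_{i}\leq x_{i}$, and since $x_{i}>0$ this yields $(\bm{B}^{k})_{ii}\leq1$ for all $i$. Hence $\mathop\mathrm{tr}\bm{B}^{k}\leq1$ for each $k=1,\dots,n$, so $\mathop\mathrm{Tr}(\bm{B})=\bigoplus_{k=1}^{n}\mathop\mathrm{tr}\bm{B}^{k}\leq1$, contradicting $\mathop\mathrm{Tr}(\bm{B})>1$.

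The bookkeeping here is light, so the main obstacle is conceptual rather than computational: recognizing that the single constraint $\bm{B}\bm{x}\leq\bm{x}$ propagates to every power $\bm{B}^{k}$, which is precisely what lets one assemble $\bm{B}^{\ast}\bm{x}$ in the necessity step and extract the diagonal bounds $\mathop\mathrm{tr}\bm{B}^{k}\leq1$ in the second case. The only points requiring care are that $\bm{B}^{\ast}$ is available only once $\mathop\mathrm{Tr}(\bm{B})\leq1$, and that "regular" must be used componentwise ($x_{i}>0$) when cancelling $x_{i}$ to obtain the contradiction.
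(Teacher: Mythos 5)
Your proof is correct and complete. The paper itself does not prove this lemma---it simply cites \cite{Krivulin2015Extremal,Krivulin2015Multidimensional}---and your argument is essentially the standard one found there: sufficiency via $\bm{B}\bm{B}^{\ast}\leq\bm{B}^{\ast}\bm{B}^{\ast}=\bm{B}^{\ast}$, necessity via iterating $\bm{B}^{k}\bm{x}\leq\bm{x}$ to get $\bm{x}=\bm{B}^{\ast}\bm{x}$, and the nonexistence claim by extracting the diagonal bounds $(\bm{B}^{k})_{ii}\leq1$ from regularity. Your caveat that the parametrization yields regular vectors only for suitable $\bm{u}$ is a fair reading of the lemma's slightly informal statement and does not affect correctness.
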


\section{Tropical Optimization Problems}
\label{S-TOP}

We are now concerned with optimization problems formulated and solved in the framework of tropical algebra to provide the basis for the solutions of multicriteria pairwise comparison problems in what follows. We present a new result that establishes a direct correspondence between the solutions of a conjugate quadratic optimization problem and the solutions of a linear inequality. Next, we offer new direct solutions to the problems of both maximization and minimization of a function in the form of the Hilbert seminorm.

\subsection{Conjugate Quadratic Optimization Problem}

Let us consider a problem to minimize a conjugate quadratic (or pseudoquadratic) vector form subject to a linear constraint. Given matrices $\bm{A},\bm{B}\in\mathbb{R}_{+}^{n\times n}$, we find regular vectors $\bm{x}\in\mathbb{R}_{+}^{n}$ that attain the minimum
\begin{equation}
\begin{aligned}
\min_{\bm{x}>\bm{0}}
&&&
\bm{x}^{-}\bm{A}\bm{x};
\\
\text{s.t.}
&&&
\bm{B}\bm{x}
\leq
\bm{x}.
\label{P-minxAx-Bxleqx}
\end{aligned}
\end{equation}

Complete solutions to the problem and its variations are proposed in \cite{Krivulin2015Extremal,Krivulin2015Multidimensional,Krivulin2017Direct,Krivulin2017Tropical}. We use a solution in the form of the next statement.
\begin{theorem}
\label{T-minxAx-Bxleqx}
Let $\bm{A}$ be a matrix with nonzero spectral radius and $\bm{B}$ be a matrix such that $\mathop\mathrm{Tr}(\bm{B})\leq1$. Then, the minimum value of the objective function in problem \eqref{P-minxAx-Bxleqx} is equal to
\begin{equation}
\theta
=
\bigoplus_{k=1}^{n}
\bigoplus_{0\leq i_{1}+\cdots+i_{k}\leq n-k}
\mathop\mathrm{tr}\nolimits^{1/k}(\bm{A}\bm{B}^{i_{1}}\cdots\bm{A}\bm{B}^{i_{k}}),
\label{E-thetaeqtr1nABi1ABik}
\end{equation}
and all regular solutions of the problem are given in the parametric form
\begin{equation}
\bm{x}
=
\bm{G}\bm{u},
\qquad
\bm{G}
=
(\theta^{-1}\bm{A}\oplus\bm{B})^{\ast},
\label{E-xeqtheta1ABastu}
\end{equation}
where $\bm{u}\ne\bm{0}$ is a vector of parameters.
\end{theorem}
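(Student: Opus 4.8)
The plan is to view problem~\eqref{P-minxAx-Bxleqx} as a parametrized family of homogeneous linear inequalities: for a tentative objective value $\mu$, the equality $\bm{x}^{-}\bm{A}\bm{x}=\mu$ in its relaxed form $\bm{A}\bm{x}\leq\mu\bm{x}$ together with the constraint $\bm{B}\bm{x}\leq\bm{x}$ amounts to the single inequality $(\mu^{-1}\bm{A}\oplus\bm{B})\bm{x}\leq\bm{x}$, to which Lemma~\ref{L-Bxleqx} can be applied. This should at once produce a sharp lower bound $\theta$, its attainment, and the parametric description of the optimal set, so the argument splits into (i) the lower bound, (ii) attainment, (iii) completeness.

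For the lower bound, take any regular feasible $\bm{x}$ and set $\mu=\bm{x}^{-}\bm{A}\bm{x}$; since $\bm{A}$ has a positive entry (its spectral radius being nonzero) and $\bm{x}$ is regular, $\mu>0$. Reading $\bm{x}^{-}(\bm{A}\bm{x})=\mu$ entrywise gives $\bm{A}\bm{x}\leq\mu\bm{x}$ (a special case of Lemma~\ref{L-Axleqd}), which combined with $\bm{B}\bm{x}\leq\bm{x}$ yields $(\mu^{-1}\bm{A}\oplus\bm{B})\bm{x}\leq\bm{x}$. As $\bm{x}$ is a regular solution of this inequality, Lemma~\ref{L-Bxleqx} forces $\mathop\mathrm{Tr}(\mu^{-1}\bm{A}\oplus\bm{B})\leq1$. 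Expanding each power $(\mu^{-1}\bm{A}\oplus\bm{B})^{p}$ for $p=1,\ldots,n$ into a sum over words in $\mu^{-1}\bm{A}$ and $\bm{B}$, grouping a length-$p$ word with $k$ occurrences of $\bm{A}$ cyclically as $\bm{A}\bm{B}^{i_{1}}\cdots\bm{A}\bm{B}^{i_{k}}$ with $i_{1}+\cdots+i_{k}=p-k$, and using cyclic invariance of the trace, one gets $\mathop\mathrm{Tr}(\mu^{-1}\bm{A}\oplus\bm{B})=\mathop\mathrm{Tr}(\bm{B})\oplus\bigoplus_{k=1}^{n}\mu^{-k}\bigoplus_{0\leq i_{1}+\cdots+i_{k}\leq n-k}\mathop\mathrm{tr}(\bm{A}\bm{B}^{i_{1}}\cdots\bm{A}\bm{B}^{i_{k}})$. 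Since $\mathop\mathrm{Tr}(\bm{B})\leq1$ by hypothesis, the condition $\mathop\mathrm{Tr}(\mu^{-1}\bm{A}\oplus\bm{B})\leq1$ reduces to requiring the $k$-th bracket to be at most $\mu^{k}$ for every $k$, and taking $k$-th roots termwise (radicability) this is exactly $\mu\geq\theta$ with $\theta$ as in~\eqref{E-thetaeqtr1nABi1ABik}. In particular $\theta\geq\bigoplus_{k=1}^{n}\mathop\mathrm{tr}\nolimits^{1/k}(\bm{A}^{k})=\lambda>0$, so $\theta$ is well defined and $\theta^{-1}$ makes sense.

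For attainment and completeness, put $\mu=\theta$; the expansion above together with $\mathop\mathrm{Tr}(\bm{B})\leq1$ gives $\mathop\mathrm{Tr}(\theta^{-1}\bm{A}\oplus\bm{B})\leq1$, so Lemma~\ref{L-Bxleqx} describes every regular solution of $(\theta^{-1}\bm{A}\oplus\bm{B})\bm{x}\leq\bm{x}$ as $\bm{x}=\bm{G}\bm{u}$ with $\bm{G}=(\theta^{-1}\bm{A}\oplus\bm{B})^{\ast}$ and $\bm{u}\ne\bm{0}$. Any such regular $\bm{x}$ satisfies $\bm{B}\bm{x}\leq(\theta^{-1}\bm{A}\oplus\bm{B})\bm{x}\leq\bm{x}$, hence is feasible, and $\theta^{-1}\bm{A}\bm{x}\leq\bm{x}$, i.e. $\bm{A}\bm{x}\leq\theta\bm{x}$, so $\bm{x}^{-}\bm{A}\bm{x}\leq\theta\,\bm{x}^{-}\bm{x}=\theta$; with the lower bound this gives $\bm{x}^{-}\bm{A}\bm{x}=\theta$, so $\theta$ is the minimum and is attained. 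Conversely, by step (i) any regular optimal $\bm{x}$ has $\mu=\theta$, hence satisfies $(\theta^{-1}\bm{A}\oplus\bm{B})\bm{x}\leq\bm{x}$ and is therefore of the form $\bm{G}\bm{u}$; thus~\eqref{E-xeqtheta1ABastu} is precisely the optimal set.

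I expect the combinatorial bookkeeping in the trace expansion to be the main obstacle: carrying out the distributive expansion of $(\mu^{-1}\bm{A}\oplus\bm{B})^{p}$, reorganizing each monomial into the canonical cyclic form $\bm{A}\bm{B}^{i_{1}}\cdots\bm{A}\bm{B}^{i_{k}}$ with the correct range $0\leq i_{1}+\cdots+i_{k}\leq n-k$, and cleanly separating the pure-$\bm{B}$ part so that $\mathop\mathrm{Tr}(\bm{B})\leq1$ can be invoked to discard it, all need care. The remaining ingredients — the entrywise passage from $\bm{x}^{-}\bm{A}\bm{x}=\mu$ to $\bm{A}\bm{x}\leq\mu\bm{x}$, termwise root extraction via radicability, the identity $\bm{x}^{-}\bm{x}=1$ for regular $\bm{x}$, and the verification that the vectors $\bm{G}\bm{u}$ are feasible with objective value $\theta$ via $\bm{A}^{\ast}\geq\bm{A}^{p}$ — are routine once Lemmas~\ref{L-Axleqd} and~\ref{L-Bxleqx} and the Kleene star properties are in hand.
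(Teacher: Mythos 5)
Your argument is correct: the key chain — from $\bm{x}^{-}\bm{A}\bm{x}=\mu$ to $\bm{A}\bm{x}\leq\mu\bm{x}$, hence to $(\mu^{-1}\bm{A}\oplus\bm{B})\bm{x}\leq\bm{x}$, then invoking the existence condition $\mathop{\mathrm{Tr}}(\mu^{-1}\bm{A}\oplus\bm{B})\leq1$ from Lemma~\ref{L-Bxleqx} and expanding the trace function over cyclic words to isolate $\mu\geq\theta$, with attainment and completeness via the Kleene star — is exactly the route taken in the works the paper cites for this theorem (the paper itself states Theorem~\ref{T-minxAx-Bxleqx} without proof, referring to those sources). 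The only point worth a remark is that regularity of $\bm{G}\bm{u}$ for arbitrary $\bm{u}\ne\bm{0}$ is not automatic, but that caveat is inherited from the phrasing of Lemma~\ref{L-Bxleqx} and is not a gap you introduced.
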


We note that the above solution has a polynomial computational complexity in the dimension $n$. Specifically, it is shown in \cite{Krivulin2017Direct,Krivulin2017Tropical} that this solution can be calculated with at most $O(n^{5})$ scalar operations.

We now formulate an equivalence statement that allows all solutions of problem \eqref{P-minxAx-Bxleqx} to be described using solutions of a vector inequality in the form of \eqref{I-Bxleqx} and vice versa. This new result plays a key role in the development of solutions for the multiobjective optimization problems in subsequent sections.
\begin{theorem}
\label{T-minxAx-Bxleqx-equivalence}
Let $\bm{A}$ be a matrix with nonzero spectral radius, $\bm{B}$ a matrix such that $\mathop\mathrm{Tr}(\bm{B})\leq1$, and $\theta$ be the minimum value of the objective function in problem \eqref{P-minxAx-Bxleqx}, given by \eqref{E-thetaeqtr1nABi1ABik}. 
Then, the following assertions are equivalent:
\renewcommand{\labelenumi}{(\theenumi)}%
\renewcommand{\theenumi}{\roman{enumi}}%
\begin{enumerate}
\item
$\bm{x}$ is a regular solution of problem \eqref{P-minxAx-Bxleqx}.
\item
$\bm{x}$ is a regular solution of the inequality
\begin{equation*}
(\theta^{-1}\bm{A}\oplus\bm{B})\bm{x}
\leq
\bm{x}.
\end{equation*}
\item
$\bm{x}$ is a regular vector given in the parametric form
\begin{equation*}
\bm{x}
=
(\theta^{-1}\bm{A}\oplus\bm{B})^{\ast}\bm{u},
\end{equation*}
where $\bm{u}\ne\bm{0}$ is a vector of parameters.
\end{enumerate}
\end{theorem}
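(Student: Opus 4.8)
The plan is to prove the cycle of implications (i) $\Rightarrow$ (ii) $\Rightarrow$ (iii) $\Rightarrow$ (i), which is enough to establish the equivalence of all three assertions. Before starting I would record one preliminary fact: the hypothesis that $\bm{A}$ has nonzero spectral radius forces $\theta\ne0$, since the terms of \eqref{E-thetaeqtr1nABi1ABik} with $i_{1}=\cdots=i_{k}=0$ already bound $\theta$ from below by the spectral radius $\bigoplus_{k=1}^{n}\mathop\mathrm{tr}^{1/k}(\bm{A}^{k})>0$; hence the scalar $\theta^{-1}$ and the matrix $\theta^{-1}\bm{A}\oplus\bm{B}$ are well defined throughout.

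Two of the implications are short. For (i) $\Rightarrow$ (ii): if $\bm{x}$ is a regular optimal solution, then $\bm{B}\bm{x}\leq\bm{x}$ and $\bm{x}^{-}\bm{A}\bm{x}=\theta$. Multiplying the scalar inequality $\bm{x}^{-}\bm{A}\bm{x}\leq\theta$ on the left by $\bm{x}$ and using $\bm{x}\bm{x}^{-}\geq\bm{I}$ gives $\bm{A}\bm{x}\leq\bm{x}(\bm{x}^{-}\bm{A}\bm{x})\leq\theta\bm{x}$, that is, $\theta^{-1}\bm{A}\bm{x}\leq\bm{x}$; adding this to $\bm{B}\bm{x}\leq\bm{x}$ yields $(\theta^{-1}\bm{A}\oplus\bm{B})\bm{x}\leq\bm{x}$, which is assertion (ii). For (iii) $\Rightarrow$ (i): a regular vector of the form $\bm{x}=(\theta^{-1}\bm{A}\oplus\bm{B})^{\ast}\bm{u}$ with $\bm{u}\ne\bm{0}$ is a member of the parametric family $\bm{x}=\bm{G}\bm{u}$, $\bm{G}=(\theta^{-1}\bm{A}\oplus\bm{B})^{\ast}$, which by Theorem~\ref{T-minxAx-Bxleqx} is exactly the set of regular solutions of \eqref{P-minxAx-Bxleqx}; hence $\bm{x}$ solves the problem, closing the cycle.

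The substantive step is (ii) $\Rightarrow$ (iii), and it rests on the claim $\mathop\mathrm{Tr}(\theta^{-1}\bm{A}\oplus\bm{B})\leq1$. Here I would expand each power $(\theta^{-1}\bm{A}\oplus\bm{B})^{k}$ for $1\leq k\leq n$ as a tropical sum over all words of length $k$ in the two ``letters'' $\theta^{-1}\bm{A}$ and $\bm{B}$, and use the cyclic invariance of the trace: a word containing no factor $\theta^{-1}\bm{A}$ contributes $\mathop\mathrm{tr}(\bm{B}^{k})\leq\mathop\mathrm{Tr}(\bm{B})\leq1$, while a word with $s\geq1$ such factors contributes, after a cyclic shift, $\theta^{-s}\mathop\mathrm{tr}(\bm{A}\bm{B}^{i_{1}}\cdots\bm{A}\bm{B}^{i_{s}})$ with $i_{1}+\cdots+i_{s}=k-s\leq n-s$, which is $\leq1$ precisely because \eqref{E-thetaeqtr1nABi1ABik} includes $\mathop\mathrm{tr}^{1/s}(\bm{A}\bm{B}^{i_{1}}\cdots\bm{A}\bm{B}^{i_{s}})$ among the quantities bounded above by $\theta$. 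Thus $\mathop\mathrm{tr}((\theta^{-1}\bm{A}\oplus\bm{B})^{k})\leq1$ for $k=1,\ldots,n$, so $\mathop\mathrm{Tr}(\theta^{-1}\bm{A}\oplus\bm{B})\leq1$. With this bound in hand, the first assertion of Lemma~\ref{L-Bxleqx}, applied to the inequality $(\theta^{-1}\bm{A}\oplus\bm{B})\bm{x}\leq\bm{x}$, shows that its regular solutions are exactly the vectors $\bm{x}=(\theta^{-1}\bm{A}\oplus\bm{B})^{\ast}\bm{u}$ with $\bm{u}\ne\bm{0}$, which is assertion (iii).

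The main obstacle is therefore the bookkeeping in this trace estimate: one must check that restricting attention to powers $k\leq n$ keeps every exponent pattern $(i_{1},\ldots,i_{s})$ arising from the expansion inside the range $i_{1}+\cdots+i_{s}\leq n-s$ over which the maximum in \eqref{E-thetaeqtr1nABi1ABik} is taken, so that no term outside that formula can contribute. Everything else is routine manipulation with conjugate transposes and the Kleene star. (A slightly shorter variant would take (i) $\Leftrightarrow$ (iii) directly from Theorem~\ref{T-minxAx-Bxleqx} and prove only (ii) $\Leftrightarrow$ (iii) via Lemma~\ref{L-Bxleqx}, but the verification $\mathop\mathrm{Tr}(\theta^{-1}\bm{A}\oplus\bm{B})\leq1$ is needed in either route.)
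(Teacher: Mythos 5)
Your proposal is correct and follows essentially the same route as the paper, whose entire proof is the one-line remark that the equivalence follows by combining Theorem~\ref{T-minxAx-Bxleqx} (which gives (i) $\Leftrightarrow$ (iii)) with Lemma~\ref{L-Bxleqx} (which gives (ii) $\Leftrightarrow$ (iii)) --- exactly the ``shorter variant'' you mention at the end. Your write-up usefully supplies the details the paper omits, in particular the verification that $\mathop\mathrm{Tr}(\theta^{-1}\bm{A}\oplus\bm{B})\leq1$ via the cyclic-trace expansion, which is indeed the one nontrivial hypothesis needed to invoke Lemma~\ref{L-Bxleqx}.
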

\begin{proof}
The proof of this equivalence theorem is readily obtained by combining the result of Theorem~\ref{T-minxAx-Bxleqx} with that of Lemma~\ref{L-Bxleqx}.
\end{proof}

\subsection{Maximization and Minimization of Hilbert Seminorm}

In this subsection, we concentrate on optimization problems with the objective function in the form of Hilbert seminorm, which appear in the analysis of multiple solutions of the pairwise comparison problem. We consider constrained problems of maximization and minimization of Hilbert seminorm and present results that allow to find all solutions of the problems.

We start with a conventional form given by \eqref{E-maxximinxi} for the Hilbert seminorm of a vector $\bm{x}=(x_{i})$. After rewriting it in terms of max-algebra, we obtain 
\begin{equation*}
\bigoplus_{i=1}^{n}x_{i}
\bigoplus_{j=1}^{n}x_{j}^{-1}
=
\bm{1}^{T}\bm{x}\bm{x}^{-}\bm{1}
=
\bm{x}^{-}\bm{1}\bm{1}^{T}\bm{x}
=
\|\bm{x}\|\|\bm{x}^{-}\|.
\end{equation*}

We examine optimization problems with $\bm{x}^{-}\bm{1}\bm{1}^{T}\bm{x}$ as the objective function, which we refer to as a tropical representation of the multiplicative Hilbert seminorm or simply as Hilbert seminorm.

First, we suppose that given a matrix $\bm{B}\in\mathbb{R}_{+}^{n\times n}$, we seek for regular solutions $\bm{x}\in\mathbb{R}_{+}^{n}$ of the constrained maximization problem
\begin{equation}
\begin{aligned}
\max_{\bm{x}>\bm{0}}
&&&
\bm{x}^{-}\bm{1}\bm{1}^{T}\bm{x};
\\
\text{s.t.}
&&&
\bm{B}\bm{x}
\leq
\bm{x}.
\end{aligned}
\label{P-maxxx11x-Bxleqx}
\end{equation}

A solution to problem \eqref{P-maxxx11x-Bxleqx} can be obtained as follows (see \cite{Krivulin2019Tropical}).
\begin{lemma}
\label{L-maxx11x-Bxleqx}
For any matrix $\bm{B}$ without zero entries and with $\mathop\mathrm{Tr}(\bm{B})\leq1$, the maximum in problem \eqref{P-maxxx11x-Bxleqx} is equal to $\|\bm{B}^{\ast}(\bm{B}^{\ast})^{-}\|$, and all regular solutions of the problem are given in the parametric form
\begin{equation*}
\bm{x}
=
\bm{G}\bm{u},
\qquad
\bm{G}
=
\bm{B}^{\ast}(\bm{B}_{lk}^{\ast})^{-}\bm{B}^{\ast}
\oplus
\bm{B}^{\ast},
\qquad
\bm{u}
\ne
\bm{0},
\end{equation*}
where $\bm{B}_{lk}^{\ast}$ is a matrix obtained from the matrix $\bm{B}^{\ast}=(\bm{b}_{j}^{\ast})$ with columns $\bm{b}_{j}^{\ast}=(b_{ij}^{\ast})$ by setting to $0$ all entries except the entry $b_{lk}^{\ast}$ with indices given by
\begin{equation*}
k
=
\arg\max_{j}\|\bm{b}_{j}^{\ast}\|\|(\bm{b}_{j}^{\ast})^{-}\|,
\qquad
l
=
\arg\min_{i}b_{ik}^{\ast}.
\end{equation*}
\end{lemma}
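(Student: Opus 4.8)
The plan is to treat problem~\eqref{P-maxxx11x-Bxleqx} in two stages: first substitute the known parametric description of the feasible set, then optimize the resulting expression over the parameter vector. By Lemma~\ref{L-Bxleqx}, since $\mathop\mathrm{Tr}(\bm{B})\leq1$, every regular solution of the constraint $\bm{B}\bm{x}\leq\bm{x}$ has the form $\bm{x}=\bm{B}^{\ast}\bm{u}$ for some nonzero $\bm{u}$. Plugging this in, the objective becomes $\bm{u}^{-}(\bm{B}^{\ast})^{-}\bm{1}\bm{1}^{T}\bm{B}^{\ast}\bm{u}$, so the constrained problem reduces to an unconstrained maximization of a conjugate pseudoquadratic form $\bm{u}^{-}\bm{Q}\bm{u}$ over regular $\bm{u}$, where $\bm{Q}=(\bm{B}^{\ast})^{-}\bm{1}\bm{1}^{T}\bm{B}^{\ast}$ is a rank-one matrix. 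The task is then to find the maximum of such a form and all vectors attaining it.

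The key observation for the second stage is that $\bm{Q}$ factors through the vectors $\bm{p}=(\bm{B}^{\ast})^{-}\bm{1}$ and $\bm{q}^{T}=\bm{1}^{T}\bm{B}^{\ast}$, so $\bm{u}^{-}\bm{Q}\bm{u}=(\bm{u}^{-}\bm{p})(\bm{q}^{T}\bm{u})=\|\bm{u}^{-}\bm{p}\|\,\|\bm{q}^{T}\bm{u}\|$ is a product of two scalar quantities that can be bounded separately. Here I would exploit the standard tropical inequalities: for any regular $\bm{u}$ one has $\bm{u}^{-}\bm{p}\le\|\bm{p}\bm{u}^{-}\|$ and, dually, a lower-type bound showing the product cannot exceed $\|\bm{p}\bm{q}^{T}\|=\|(\bm{B}^{\ast})^{-}\bm{1}\bm{1}^{T}\bm{B}^{\ast}\|$. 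One must then check that this upper bound is exactly $\|\bm{B}^{\ast}(\bm{B}^{\ast})^{-}\|$; this is a matrix-norm identity that follows by expanding both sides entrywise — $\|\bm{B}^{\ast}(\bm{B}^{\ast})^{-}\|=\bigoplus_{i,j}\bigoplus_{r}b_{ir}^{\ast}(b_{jr}^{\ast})^{-1}$, whereas $\|(\bm{B}^{\ast})^{-}\bm{1}\bm{1}^{T}\bm{B}^{\ast}\|=\bigoplus_{i}\bigoplus_{j}(b_{ji}^{\ast})^{-1}\cdot\bigoplus_{r,s}b_{rs}^{\ast}$ — and reconciling these by choosing indices appropriately, which is where the indices $k$ (maximizing the column contribution $\|\bm{b}_j^{\ast}\|\|(\bm{b}_j^{\ast})^{-}\|$) and $l$ (minimizing $b_{ik}^{\ast}$ in that column) come from.

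For the solution set, the approach is to characterize exactly which $\bm{u}$ attain the bound and then push the description back through $\bm{x}=\bm{B}^{\ast}\bm{u}$. The attaining vectors should be those whose "spread" is concentrated so that the single dominant entry $b_{lk}^{\ast}$ of $\bm{B}^{\ast}$ realizes the seminorm; writing $\bm{B}_{lk}^{\ast}$ for the matrix with only that entry retained, one shows $\bm{x}=\bm{B}^{\ast}(\bm{B}_{lk}^{\ast})^{-}\bm{B}^{\ast}\bm{u}\oplus\bm{B}^{\ast}\bm{u}=\bm{G}\bm{u}$ is both feasible (using $\bm{B}\bm{B}^{\ast}=\bm{B}^{\ast}\le\bm{B}^{\ast}$ type relations and $\bm{B}^{\ast}\bm{B}^{\ast}=\bm{B}^{\ast}$) and optimal, and conversely that every optimal $\bm{x}$ is of this form. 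The first ($\bm{B}^{\ast}$) summand in $\bm{G}$ guarantees feasibility, while the first summand $\bm{B}^{\ast}(\bm{B}_{lk}^{\ast})^{-}\bm{B}^{\ast}$ forces the seminorm up to its maximum value; verifying that their max is again feasible and still attains $\theta$ is the crux.

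The main obstacle I anticipate is the bookkeeping in the second stage: establishing the identity $\|\bm{B}^{\ast}(\bm{B}^{\ast})^{-}\|=\|\bm{p}\bm{q}^{T}\|$ together with the precise extraction of the optimal indices $(l,k)$, and then proving that the two-term formula for $\bm{G}$ captures \emph{all} optimal solutions rather than merely \emph{some}. The feasibility check and the optimality-value computation are individually routine given Lemmas~\ref{L-Axleqd} and~\ref{L-Bxleqx} and the idempotent-algebra rules for $\ast$ and conjugate transpose, but showing that the parametric family is complete requires a careful argument that any optimal $\bm{x}$ must "saturate" through the entry $b_{lk}^{\ast}$, which is the delicate combinatorial heart of the proof.
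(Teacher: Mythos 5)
The paper does not actually prove Lemma~\ref{L-maxx11x-Bxleqx} (it is imported from the cited references), so your proposal must stand on its own, and its central reduction fails. After substituting $\bm{x}=\bm{B}^{\ast}\bm{u}$ the objective is $(\bm{B}^{\ast}\bm{u})^{-}\bm{1}\bm{1}^{T}\bm{B}^{\ast}\bm{u}$, and this is \emph{not} the pseudoquadratic form $\bm{u}^{-}(\bm{B}^{\ast})^{-}\bm{1}\bm{1}^{T}\bm{B}^{\ast}\bm{u}$: conjugate transposition is not multiplicative. Componentwise, $((\bm{B}^{\ast}\bm{u})^{-})_{i}=\bigl(\bigoplus_{j}b_{ij}^{\ast}u_{j}\bigr)^{-1}=\min_{j}(b_{ij}^{\ast}u_{j})^{-1}$ while $(\bm{u}^{-}(\bm{B}^{\ast})^{-})_{i}=\max_{j}(b_{ij}^{\ast}u_{j})^{-1}$, so you only have the inequality $(\bm{B}^{\ast}\bm{u})^{-}\leq\bm{u}^{-}(\bm{B}^{\ast})^{-}$ and your rank-one form overestimates the objective. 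The same defect reappears when you bound the two factors $\bm{x}^{-}\bm{1}$ and $\bm{1}^{T}\bm{x}$ separately: that gives $\|(\bm{B}^{\ast})^{-}\bm{1}\bm{1}^{T}\bm{B}^{\ast}\|$, the ratio of the largest to the smallest entry of $\bm{B}^{\ast}$, whereas the true optimum $\|\bm{B}^{\ast}(\bm{B}^{\ast})^{-}\|=\bigoplus_{j}\|\bm{b}_{j}^{\ast}\|\|(\bm{b}_{j}^{\ast})^{-}\|$ is the largest \emph{within-column} spread. These differ in general, so the ``matrix-norm identity'' you invoke is false and cannot be rescued by any choice of $(l,k)$. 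Concretely, take $\bm{B}$ with $b_{12}=2$, $b_{21}=1/4$ and small positive diagonal entries, so that $\mathop\mathrm{Tr}(\bm{B})=1/2\leq1$ and
\begin{equation*}
\bm{B}^{\ast}
=
\begin{pmatrix}1 & 2\\ 1/4 & 1\end{pmatrix},
\qquad
\|\bm{B}^{\ast}(\bm{B}^{\ast})^{-}\|=4,
\qquad
\|(\bm{B}^{\ast})^{-}\bm{1}\|\,\|\bm{1}^{T}\bm{B}^{\ast}\|=4\cdot 2=8;
\end{equation*}
a direct check of $\|\bm{x}\|\|\bm{x}^{-}\|$ over all $\bm{x}=\bm{B}^{\ast}\bm{u}$ confirms the supremum is $4$, not $8$, so your bound is not attained.

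The repair is the technique the paper does exhibit, in the proof of the normalized variant Lemma~\ref{L-maxxx11x-Bxleqx-1xeq1}: keep $(\bm{B}^{\ast}\bm{u})^{-}$ intact and use that each of its components is a minimum, so that $\bigl(\bigoplus_{j}b_{ij}^{\ast}u_{j}\bigr)^{-1}\leq(b_{ik}^{\ast}u_{k})^{-1}$ for every $k$, with equality when $\bm{u}$ is supported on the single column $k$. Maximization then forces concentration of $\bm{u}$ on the column $k$ attaining $\max_{j}\|\bm{b}_{j}^{\ast}\|\|(\bm{b}_{j}^{\ast})^{-}\|$, which yields the correct value and explains the indices $k$ and $l=\arg\min_{i}b_{ik}^{\ast}$. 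Your sketch of the final stage (feasibility of $\bm{G}\bm{u}$ via $\bm{B}^{\ast}\bm{B}^{\ast}=\bm{B}^{\ast}$, and completeness by showing every optimizer saturates through $b_{lk}^{\ast}$) points in the right direction, but it cannot be executed until the value computation is corrected, since the characterization of the optimal set hinges on the exact optimal value.
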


It is not difficult to see that the most computationally intensive component of the solution is the calculation of the Kleene star matrix $\bm{B}^{\ast}$. This calculation can take no more that $O(n^{4})$ scalar operations, which determines the computational complexity involved in the solution. 

Another problem of interest is to find regular vectors $\bm{x}$ that achieve the minimum
\begin{equation}
\begin{aligned}
\min_{\bm{x}>\bm{0}}
&&&
\bm{x}^{-}\bm{1}\bm{1}^{T}\bm{x};
\\
\text{s.t.}
&&&
\bm{B}\bm{x}
\leq
\bm{x}.
\end{aligned}
\label{P-minxx11x-Bxleqx}
\end{equation}

The statement below offers a solution to problem \eqref{P-minxx11x-Bxleqx}, obtained by applying Theorem~\ref{T-minxAx-Bxleqx} with substitution $\bm{A}=\bm{1}\bm{1}^{T}$ (see \cite{Krivulin2021Algebraicsolution} for a solution of the problem with an extended system of constraints).
\begin{lemma}
\label{L-minx11x-Bxleqx}
For any matrix $\bm{B}$ with $\mathop\mathrm{Tr}(\bm{B})\leq1$, the minimum value of the objective function in problem \eqref{P-minxx11x-Bxleqx} is equal to $\|\bm{B}^{\ast}\|$, and all regular solutions are given in the parametric form
\begin{equation*}
\bm{x}
=
\bm{G}\bm{u},
\qquad
\bm{G}
=
\bigoplus_{0\leq i+j\leq n-1}
\|\bm{B}^{\ast}\|^{-1}
\bm{B}^{i}\bm{1}\bm{1}^{T}\bm{B}^{j}
\oplus
\bm{B}^{\ast},
\qquad
\bm{u}\ne\bm{0}.
\end{equation*}
\end{lemma}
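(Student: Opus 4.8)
The plan is to recognize the objective function as a conjugate quadratic form and to invoke Theorem~\ref{T-minxAx-Bxleqx}. Setting $\bm{A}=\bm{1}\bm{1}^{T}$, we have $\bm{x}^{-}\bm{1}\bm{1}^{T}\bm{x}=\bm{x}^{-}\bm{A}\bm{x}$, so problem \eqref{P-minxx11x-Bxleqx} coincides with problem \eqref{P-minxAx-Bxleqx} for this particular $\bm{A}$. I would first check the hypotheses of the theorem: the matrix $\bm{A}=\bm{1}\bm{1}^{T}$ is idempotent, since $\bm{1}^{T}\bm{1}=1$ gives $\bm{A}^{2}=\bm{1}(\bm{1}^{T}\bm{1})\bm{1}^{T}=\bm{A}$, and $\mathop\mathrm{tr}\bm{A}=1$, so its spectral radius equals $1\ne0$; and $\mathop\mathrm{Tr}(\bm{B})\leq1$ is assumed. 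Theorem~\ref{T-minxAx-Bxleqx} then supplies both the minimum value $\theta$ via formula \eqref{E-thetaeqtr1nABi1ABik} and the family of all regular solutions $\bm{x}=\bm{G}\bm{u}$, $\bm{u}\ne\bm{0}$, with $\bm{G}=(\theta^{-1}\bm{A}\oplus\bm{B})^{\ast}$; it remains to rewrite $\theta$ and $\bm{G}$ using the rank-one structure of $\bm{A}$.

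For $\theta$, the key point is that every factor $\bm{A}=\bm{1}\bm{1}^{T}$ placed between powers of $\bm{B}$ collapses to a scalar: because $\bm{1}^{T}\bm{B}^{i}\bm{1}=\|\bm{B}^{i}\|$, a direct computation gives $\mathop\mathrm{tr}(\bm{A}\bm{B}^{i_{1}}\bm{A}\bm{B}^{i_{2}}\cdots\bm{A}\bm{B}^{i_{k}})=\|\bm{B}^{i_{1}}\|\cdots\|\bm{B}^{i_{k}}\|$, so each summand of \eqref{E-thetaeqtr1nABi1ABik} equals $(\|\bm{B}^{i_{1}}\|\cdots\|\bm{B}^{i_{k}}\|)^{1/k}$. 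Taking $k=1$ with $i_{1}$ running from $0$ to $n-1$ yields $\bigoplus_{i=0}^{n-1}\|\bm{B}^{i}\|=\|\bm{B}^{\ast}\|$, hence $\theta\geq\|\bm{B}^{\ast}\|$. Conversely, every exponent in a summand obeys $i_{s}\leq n-k\leq n-1$, so $\|\bm{B}^{i_{s}}\|\leq\|\bm{B}^{\ast}\|$, and by monotonicity of products and of the map $t\mapsto t^{1/k}$ we get $(\|\bm{B}^{i_{1}}\|\cdots\|\bm{B}^{i_{k}}\|)^{1/k}\leq\|\bm{B}^{\ast}\|$, hence $\theta\leq\|\bm{B}^{\ast}\|$. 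Therefore $\theta=\|\bm{B}^{\ast}\|$, which is nonzero because $\|\bm{B}^{\ast}\|\geq\|\bm{I}\|=1$.

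For $\bm{G}$, I would expand $\bm{G}=(\theta^{-1}\bm{1}\bm{1}^{T}\oplus\bm{B})^{\ast}=\bigoplus_{p=0}^{n-1}(\theta^{-1}\bm{1}\bm{1}^{T}\oplus\bm{B})^{p}$ into monomials in $\bm{B}$ and $\theta^{-1}\bm{1}\bm{1}^{T}$, grouping each monomial according to its maximal runs of consecutive $\bm{B}$'s into the shape $\theta^{-r}\bm{B}^{a_{0}}(\bm{1}\bm{1}^{T})\bm{B}^{a_{1}}\cdots(\bm{1}\bm{1}^{T})\bm{B}^{a_{r}}$ with $a_{0}+\cdots+a_{r}+r=p$. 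The monomials with $r=0$ add up to $\bm{B}^{\ast}$. For $r\geq1$, collapsing the interior factors through $\bm{1}^{T}\bm{B}^{a_{s}}\bm{1}=\|\bm{B}^{a_{s}}\|$ and using $\|\bm{B}^{a_{s}}\|\leq\|\bm{B}^{\ast}\|=\theta$ (valid since $a_{s}\leq n-1$) shows the scalar coefficient $\theta^{-r}\|\bm{B}^{a_{1}}\|\cdots\|\bm{B}^{a_{r-1}}\|$ is at most $\theta^{-1}$, so the monomial is dominated entrywise by $\theta^{-1}\bm{B}^{a_{0}}\bm{1}\bm{1}^{T}\bm{B}^{a_{r}}$ with $a_{0}+a_{r}\leq n-1$. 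Thus $\bm{G}\leq\bm{B}^{\ast}\oplus\bigoplus_{0\leq i+j\leq n-1}\theta^{-1}\bm{B}^{i}\bm{1}\bm{1}^{T}\bm{B}^{j}$. The reverse inequality is immediate: $\bm{B}^{\ast}\leq\bm{G}$ because $\bm{B}\leq\theta^{-1}\bm{1}\bm{1}^{T}\oplus\bm{B}$ and matrix powers are monotone, while for $0\leq i+j\leq n-1$ the product $\theta^{-1}\bm{B}^{i}\bm{1}\bm{1}^{T}\bm{B}^{j}$ is one of the monomials of $(\theta^{-1}\bm{1}\bm{1}^{T}\oplus\bm{B})^{i+j+1}$ and therefore $\leq(\theta^{-1}\bm{1}\bm{1}^{T}\oplus\bm{B})^{\ast}$, by the inequality $(\theta^{-1}\bm{1}\bm{1}^{T}\oplus\bm{B})^{\ast}\geq(\theta^{-1}\bm{1}\bm{1}^{T}\oplus\bm{B})^{p}$ that holds for every integer $p\geq0$. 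Combining the two bounds gives the claimed form of $\bm{G}$.

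The scalar-collapse identities and the exponent bookkeeping are routine; the main obstacle is performing the Kleene-star expansion cleanly, and in particular verifying that the bound $\|\bm{B}^{a_{s}}\|\leq\theta$ applies to all intermediate powers, so that the coefficient of each long monomial genuinely drops to at most $\theta^{-1}$ and no term with $i+j>n-1$ is needed.
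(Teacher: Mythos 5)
Your proposal is correct and follows exactly the route the paper indicates: Lemma~\ref{L-minx11x-Bxleqx} is stated there as a direct application of Theorem~\ref{T-minxAx-Bxleqx} with the substitution $\bm{A}=\bm{1}\bm{1}^{T}$, which is precisely what you do. Your scalar-collapse computation of $\theta=\|\bm{B}^{\ast}\|$ and the two-sided Kleene-star expansion of $\bm{G}$ correctly supply the simplification details that the paper leaves implicit.
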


The computational complexity of the solution is shown in \cite{Krivulin2021Algebraicsolution} to be $O(n^{4})$.

Let us examine problems \eqref{P-maxxx11x-Bxleqx} and \eqref{P-minxx11x-Bxleqx} under additional conditions to implement an improved technique of evaluating the best and worst differentiating solutions of the pairwise comparison problem. We add to both problems a normalization constraint on the solution vector $\bm{x}$ in the form
\begin{equation*}
\bm{1}^{T}\bm{x}
=
\|\bm{x}\|
=
1.
\end{equation*}

We now concentrate on finding extremal (minimal and maximal) normalized solutions of problems \eqref{P-maxxx11x-Bxleqx} and \eqref{P-minxx11x-Bxleqx}. To solve the problems with normalization constraint, one can refine the results of Lemmas~\ref{L-maxx11x-Bxleqx} and \ref{L-minx11x-Bxleqx} and then find the extremal solutions. Below we offer direct solutions to the problems, which use less specific assumptions and involve less complicated calculations.

\subsection{Maximization Problem}

Suppose that we need to find the minimal solution of problem \eqref{P-maxxx11x-Bxleqx} with the normalization constraint added. The problem takes the following form:
\begin{equation}
\begin{aligned}
\max_{\bm{x}>\bm{0}}
&&&
\bm{x}^{-}\bm{1};
\\
\text{s.t.}
&&&
\bm{B}\bm{x}
\leq
\bm{x},
\quad
\bm{1}^{T}\bm{x}
=
1.
\end{aligned}
\label{P-maxxx11x-Bxleqx-1xeq1}
\end{equation}

The next statement improves the result of Lemma~\eqref{L-maxx11x-Bxleqx} and extends it to matrices that may have zero entries.
\begin{lemma}
\label{L-maxxx11x-Bxleqx-1xeq1}
For any matrix $\bm{B}$ with $\mathop\mathrm{Tr}(\bm{B})\leq1$, the maximum value of the objective function in problem \eqref{P-maxxx11x-Bxleqx-1xeq1} is equal to $\|\bm{B}^{\ast}
(\bm{B}^{\ast})^{-}\|$, and the minimal solutions are given by the set of normalized columns in the matrix $\bm{B}^{\ast}=(\bm{b}_{j}^{\ast})$ in the form
\begin{equation*}
\bm{x}
=
\bm{b}_{k}^{\ast}\|\bm{b}_{k}^{\ast}\|^{-1},
\qquad
k
=
\arg\max_{1\leq j\leq n}\|\bm{b}_{j}^{\ast}\|
\|(\bm{b}_{j}^{\ast})^{-}\|.
\end{equation*}

If a vector of the set is dominated by the other vectors (the minimal vector) or is the only vector in the set, it uniquely determines the minimal solution of the problem. Otherwise, the minimal solution is not uniquely defined.
\end{lemma}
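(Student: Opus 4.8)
The plan is to reduce problem~\eqref{P-maxxx11x-Bxleqx-1xeq1} to an unnormalized maximization already solved in Lemma~\ref{L-maxx11x-Bxleqx}, exploit scale-invariance to recover the normalized optimum, and then single out the \emph{minimal} normalized maximizer by a direct argument about the columns of $\bm{B}^{\ast}$. First I would observe that, by Lemma~\ref{L-Bxleqx}, under $\mathop\mathrm{Tr}(\bm{B})\leq1$ every regular solution of $\bm{B}\bm{x}\leq\bm{x}$ has the form $\bm{x}=\bm{B}^{\ast}\bm{u}$ with $\bm{u}\ne\bm{0}$; since the objective $\bm{x}^{-}\bm{1}$ here equals $\|\bm{x}^{-}\|$ and, after we impose $\|\bm{x}\|=1$, the value $\bm{x}^{-}\bm{1}$ coincides with $\|\bm{x}\|\|\bm{x}^{-}\|=\bm{x}^{-}\bm{1}\bm{1}^{T}\bm{x}$, the normalized problem has the same optimal value as the unnormalized problem~\eqref{P-maxxx11x-Bxleqx}. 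By Lemma~\ref{L-maxx11x-Bxleqx} that value is $\|\bm{B}^{\ast}(\bm{B}^{\ast})^{-}\|=\bigoplus_{j}\|\bm{b}_{j}^{\ast}\|\,\|(\bm{b}_{j}^{\ast})^{-}\|$, which establishes the first claim.

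Next I would characterize which vectors attain this value. Writing $\bm{x}=\bm{B}^{\ast}\bm{u}=\bigoplus_{j}u_{j}\bm{b}_{j}^{\ast}$, monotonicity of the semifield operations gives $\|\bm{x}\|=\bigoplus_{j}u_{j}\|\bm{b}_{j}^{\ast}\|$ and $\|\bm{x}^{-}\|=\bigoplus_{j}(u_{j}\bm{b}_{j}^{\ast})^{-}\bm{1}=\bigoplus_{j}u_{j}^{-1}\|(\bm{b}_{j}^{\ast})^{-}\|$ need not hold with equality componentwise, so the crux is to bound the product. The key inequality is that, because $\bm{x}\geq u_{j}\bm{b}_{j}^{\ast}$ for every $j$ while $\bm{x}^{-}\leq u_{j}^{-1}(\bm{b}_{j}^{\ast})^{-}$ (conjugation reverses order), we get $\|\bm{x}\|\|\bm{x}^{-}\|\leq\bigoplus_{j}\|\bm{b}_{j}^{\ast}\|\,\|(\bm{b}_{j}^{\ast})^{-}\|$ only after a more careful pass, using that the maximum over the Hilbert seminorm of a max-combination of vectors is attained already on one of the summands; I would cite or reprove the fact that $\|(\bigoplus_j v_j)\|\,\|(\bigoplus_j v_j)^{-}\|$ cannot exceed $\bigoplus_j \|v_j\|\|v_j^{-}\|$ is \emph{false} in general, hence equality is forced only when the combination essentially collapses to a single scaled column $\bm{b}_k^{\ast}$ with $k$ maximizing $\|\bm{b}_{j}^{\ast}\|\|(\bm{b}_{j}^{\ast})^{-}\|$. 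This is the step I expect to be the main obstacle: pinning down precisely that any maximizer must be collinear to such a column $\bm{b}_k^{\ast}$, rather than a genuine max-combination of several columns.

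Having identified the maximizers as the normalized columns $\bm{x}=\bm{b}_{k}^{\ast}\|\bm{b}_{k}^{\ast}\|^{-1}$ over all indices $k$ in the $\arg\max$ set, I would finish by discussing minimality. Each such $\bm{x}$ is feasible since $\bm{B}\bm{b}_{k}^{\ast}\leq\bm{B}^{\ast}\bm{b}_{k}^{\ast}=\bm{B}^{\ast}\bm{B}^{\ast}\bm{e}_k\leq\bm{B}^{\ast}\bm{e}_k=\bm{b}_k^{\ast}$ (using $\bm{B}^{\ast}\bm{B}^{\ast}=\bm{B}^{\ast}$ and $\bm{B}\bm{B}^{\ast}\leq\bm{B}^{\ast}$), and $\|\bm{x}\|=1$; moreover any maximizer written as $\bm{B}^{\ast}\bm{u}$ satisfies $\bm{x}\geq u_k\bm{b}_k^{\ast}$ for the relevant $k$, and after normalization $\|\bm{x}\|=1$ forces $u_k\|\bm{b}_k^{\ast}\|\leq 1$, so $\bm{x}\geq \bm{b}_k^{\ast}\|\bm{b}_k^{\ast}\|^{-1}$ whenever $k$ is the governing index. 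Hence if the set of candidate columns has a componentwise least element (or a single element), that element lies below every maximizer and is the minimal solution; if two incomparable columns both govern, no single vector dominates from below, so the minimal solution fails to exist — which is exactly the dichotomy stated. I would present the feasibility check and the domination inequality explicitly, and keep the non-uniqueness clause as an immediate consequence of incomparability of the $\arg\max$ columns.
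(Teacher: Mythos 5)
Your overall route coincides with the paper's: parametrize the feasible set as $\bm{x}=\bm{B}^{\ast}\bm{u}$ via Lemma~\ref{L-Bxleqx}, use the normalization $\bm{1}^{T}\bm{B}^{\ast}\bm{u}=1$ to force $u_{k}=\|\bm{b}_{k}^{\ast}\|^{-1}$ at some index $k$, bound the objective by $\|\bm{b}_{k}^{\ast}\|\|(\bm{b}_{k}^{\ast})^{-}\|$, and identify the normalized columns with $k$ in the $\arg\max$ set as the minimal maximizers. However, there are three concrete gaps. First, you obtain the optimal value by citing Lemma~\ref{L-maxx11x-Bxleqx}, which is stated only for matrices $\bm{B}$ \emph{without zero entries}, whereas the present lemma assumes only $\mathop\mathrm{Tr}(\bm{B})\leq1$ (and in the paper's own example $\bm{B}$ is mostly zero); the paper deliberately rederives the value directly from the bound $(\bm{B}^{\ast}\bm{u})^{-}\bm{1}\leq\bigoplus_{i}(b_{ik}^{\ast}u_{k})^{-1}=\|\bm{b}_{k}^{\ast}\|\|(\bm{b}_{k}^{\ast})^{-}\|$ precisely to avoid importing that positivity hypothesis. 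You should do the same, or at least justify why the cited lemma extends.

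Second, your middle paragraph is off target: the inequality $\|\bigoplus_{j}\bm{w}_{j}\|\,\|(\bigoplus_{j}\bm{w}_{j})^{-}\|\leq\bigoplus_{j}\|\bm{w}_{j}\|\,\|\bm{w}_{j}^{-}\|$ is in fact \emph{true} (pick the index realizing the outer norm and drop the other summands inside the inner minimum), and the conclusion you try to force from its alleged failure --- that every maximizer must be collinear to a single column $\bm{b}_{k}^{\ast}$ --- is both false (Lemma~\ref{L-maxx11x-Bxleqx} exhibits a strictly larger solution set generated by $\bm{B}^{\ast}(\bm{B}_{lk}^{\ast})^{-}\bm{B}^{\ast}\oplus\bm{B}^{\ast}$) and unnecessary: the lemma only claims the normalized columns are the \emph{minimal} maximizers. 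Third, your closing domination argument has the inequality pointing the wrong way: the normalization $\bigoplus_{j}u_{j}\|\bm{b}_{j}^{\ast}\|=1$ gives $u_{k}\|\bm{b}_{k}^{\ast}\|=1$, i.e.\ \emph{equality}, at the governing index (and only $u_{j}\|\bm{b}_{j}^{\ast}\|\leq1$ elsewhere); from $u_{k}\|\bm{b}_{k}^{\ast}\|\leq1$ alone you get $\bm{x}\geq u_{k}\bm{b}_{k}^{\ast}$, which can lie strictly below $\bm{b}_{k}^{\ast}\|\bm{b}_{k}^{\ast}\|^{-1}$, so the claimed domination $\bm{x}\geq\bm{b}_{k}^{\ast}\|\bm{b}_{k}^{\ast}\|^{-1}$ does not follow. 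You also need to note that any governing index of a maximizer must itself lie in the $\arg\max$ set (otherwise the upper bound at that index would be below the optimum). With these repairs the argument closes exactly as in the paper.
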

\begin{proof}
We solve the inequality constraint in problem \eqref{P-maxxx11x-Bxleqx-1xeq1} by using Lemma~\ref{L-Bxleqx} to write $\bm{x}=\bm{B}^{\ast}\bm{u}$ where $\bm{u}\ne\bm{0}$, and then represent the problem in the form
\begin{equation*}
\begin{aligned}
\max_{\bm{u}\ne\bm{0}}
&&&
(\bm{B}^{\ast}\bm{u})^{-}\bm{1};
\\
\text{s.t.}
&&&
\bm{1}^{T}\bm{B}^{\ast}\bm{u}
=
1.
\end{aligned}
\end{equation*}

First, we observe that the equality constraint always has solutions. It follows from this constraint that $\bm{u}\leq(\bm{1}^{T}\bm{B}^{\ast})^{-}$ where at least for one component of the vector $\bm{u}$, the scalar inequality is satisfied as an equality. 

Suppose that the equality holds for some $k=1,\ldots,n$, which defines the components of $\bm{u}=(u_{j})$ by the conditions
\begin{equation*}
u_{k}
=
(\bm{1}^{T}\bm{b}_{k}^{\ast})^{-1}
=
\|\bm{b}_{k}^{\ast}\|^{-1},
\qquad
u_{j}
\leq
(\bm{1}^{T}\bm{b}_{j}^{\ast})^{-1}
=
\|\bm{b}_{j}^{\ast}\|^{-1},
\quad
j\ne k.
\end{equation*}

Under this assumption, we use properties of idempotent addition to bound the objective function from above as follows:
\begin{equation*}
(\bm{B}^{\ast}\bm{u})^{-}\bm{1}
=
\bigoplus_{i=1}^{n}
(b_{i1}^{\ast}u_{1}\oplus\cdots\oplus b_{in}^{\ast}u_{n})^{-1}
\leq
\bigoplus_{i=1}^{n}(b_{ik}^{\ast}u_{k})^{-1}
=
\|(\bm{b}_{k}^{\ast})^{-}\|
\|\bm{b}_{k}^{\ast}\|.
\end{equation*}

Note that the above inequality becomes an equality if we put $u_{j}=0$ for all $j\ne k$ since in this case, we have $(b_{i1}^{\ast}u_{1}\oplus\cdots\oplus b_{in}^{\ast}u_{n})^{-1}=(b_{ik}^{\ast}u_{k})^{-1}$.

Furthermore, the maximum value of the objective function $(\bm{B}^{\ast}\bm{u})^{-}\bm{1}$ is attained if the index $k$ is chosen as
\begin{equation*}
k
=
\arg\max_{1\leq j\leq n}
\|\bm{b}_{j}^{\ast}\|\|(\bm{b}_{j}^{\ast})^{-}\|,
\end{equation*}
which yields this maximum equal to $\bm{1}^{T}\bm{B}^{\ast}(\bm{B}^{\ast})^{-}\bm{1}=\|\bm{B}^{\ast}(\bm{B}^{\ast})^{-}\|$.
 
For each index $k$ that provides the above maximum, we take as a solution to the maximization problem the vector $\bm{u}$ with the components
\begin{equation*}
u_{k}
=
\|\bm{b}_{k}^{\ast}\|^{-1},
\qquad
u_{j}
=
0,
\quad
j\ne k.
\end{equation*}

Since the equality $\bm{x}=\bm{B}^{\ast}\bm{u}$ holds, this vector $\bm{u}$ produces the vector
\begin{equation*}
\bm{x}
=
u_{1}\bm{b}_{1}^{\ast}
\oplus\cdots\oplus
u_{n}\bm{b}_{n}^{\ast}
=
\bm{b}_{k}^{\ast}\|\bm{b}_{k}^{\ast}\|^{-1}.
\end{equation*}

We observe that the solution vector obtained has the form of a normalized column of the matrix $\bm{B}^{\ast}$, and that for a fixed $k$, it is the minimal solution due to monotonicity of the matrix operations.

In the case of several indices $k$ that yield the maximum value of the objective function, we combine the corresponding solution vectors in a set. If there is the minimal vector among them, we take this vector as the unique minimal solution. Otherwise, the minimal solution cannot be uniquely determined, and therefore it is given by multiple vectors.
\end{proof}

\subsection{Minimization Problem}

We turn to the derivation of the maximal solution to the following minimization problem obtained from \eqref{P-minxx11x-Bxleqx} by adding the normalization constraint:
\begin{equation}
\begin{aligned}
\min_{\bm{x}>\bm{0}}
&&&
\bm{x}^{-}\bm{1};
\\
\text{s.t.}
&&&
\bm{B}\bm{x}
\leq
\bm{x},
\quad
\bm{1}^{T}\bm{x}
=
1.
\end{aligned}
\label{P-minxx11x-Bxleqx-1xeq1}
\end{equation}

A complete solution of the problem is given by the next statement.
\begin{lemma}
\label{L-minxx11x-Bxleqx-1xeq1}
For any matrix $\bm{B}$ with $\mathop\mathrm{Tr}(\bm{B})\leq1$, the minimum value of the objective function in problem \eqref{P-minxx11x-Bxleqx-1xeq1} is equal to $\|\bm{B}^{\ast}\|$, and the maximal solution is given by
\begin{equation*}
\bm{x}
=
(\bm{1}^{T}\bm{B}^{\ast})^{-}.
\end{equation*}
\end{lemma}
\begin{proof}
Similarly as before, we replace the inequality $\bm{B}\bm{x}\leq\bm{x}$ by its solution $\bm{x}=\bm{B}^{\ast}\bm{u}$ with $\bm{u}\ne\bm{0}$. Combining with the constraint $\bm{1}^{T}\bm{x}=1$ yields the equality $\bm{1}^{T}\bm{B}^{\ast}\bm{u}=1$, from which the inequality $\bm{u}\leq(\bm{1}^{T}\bm{B}^{\ast})^{-}$ follows. 

After multiplication of the last inequality by $\bm{B}^{\ast}$ from the left, we arrive at a constraint on $\bm{x}$ in the form
\begin{equation*}
\bm{x}
=
\bm{B}^{\ast}\bm{u}
\leq
\bm{B}^{\ast}(\bm{1}^{T}\bm{B}^{\ast})^{-}.
\end{equation*}

In the same way as in \cite{Krivulin2014Complete}, we verify the equality $\bm{B}^{\ast}(\bm{1}^{T}\bm{B}^{\ast})^{-}=(\bm{1}^{T}\bm{B}^{\ast})^{-}$. Indeed, since $\bm{B}^{\ast}\geq\bm{I}$, the inequality $\bm{B}^{\ast}(\bm{1}^{T}\bm{B}^{\ast})^{-}\geq(\bm{1}^{T}\bm{B}^{\ast})^{-}$ holds.

To show the opposite inequality $\bm{B}^{\ast}(\bm{1}^{T}\bm{B}^{\ast})^{-}\leq(\bm{1}^{T}\bm{B}^{\ast})^{-}$, we note that the identity $\bm{B}^{\ast}\bm{B}^{\ast}=\bm{B}^{\ast}$ is valid according to the properties of the Kleene star matrix. Application of the properties of conjugate transposition yields
\begin{equation*}
(\bm{1}^{T}\bm{B}^{\ast})^{-}\bm{1}^{T}\bm{B}^{\ast}
\geq
\bm{I},
\qquad
\bm{1}^{T}\bm{B}^{\ast}\bm{B}^{\ast}(\bm{1}^{T}\bm{B}^{\ast}\bm{B}^{\ast})^{-}=1.
\end{equation*}

We combine these relations to obtain the desired inequality as follows:
\begin{equation*}
\bm{B}^{\ast}
(\bm{1}^{T}\bm{B}^{\ast})^{-}
=
\bm{B}^{\ast}
(\bm{1}^{T}\bm{B}^{\ast}\bm{B}^{\ast})^{-}
\leq
(\bm{1}^{T}\bm{B}^{\ast})^{-}
\bm{1}^{T}\bm{B}^{\ast}
\bm{B}^{\ast}
(\bm{1}^{T}\bm{B}^{\ast}\bm{B}^{\ast})^{-}
=
(\bm{1}^{T}\bm{B}^{\ast})^{-}.
\end{equation*}

As a result, the inequality constraint on the vector $\bm{x}$ reduces to 
\begin{equation*}
\bm{x}
\leq
(\bm{1}^{T}\bm{B}^{\ast})^{-}.
\end{equation*}

Furthermore, we examine the vector $\bm{x}=(\bm{1}^{T}\bm{B}^{\ast})^{-}$, which is the maximal solution of the last inequality, to verify that this vector satisfies both constraints of the problem.
Since $\bm{B}^{\ast}\geq\bm{B}$, we can write
\begin{equation*}
\bm{B}\bm{x}
=
\bm{B}(\bm{1}^{T}\bm{B}^{\ast})^{-}
\leq
\bm{B}^{\ast}(\bm{1}^{T}\bm{B}^{\ast})^{-}
=
(\bm{1}^{T}\bm{B}^{\ast})^{-}
=
\bm{x},
\end{equation*}
and thus this vector $\bm{x}$ satisfies the inequality constraint. Moreover, we have
\begin{equation*}
\bm{1}^{T}\bm{x}
=
\bm{1}^{T}(\bm{1}^{T}\bm{B}^{\ast})^{-}
=
\bm{1}^{T}\bm{B}^{\ast}(\bm{1}^{T}\bm{B}^{\ast})^{-}
=
1,
\end{equation*}
which means that the equality constraint holds as well.

After conjugate transposition of both sides of the inequality $\bm{x}\leq(\bm{1}^{T}\bm{B}^{\ast})^{-}$ and multiplication by $\bm{1}$, we obtain a lower bound on the objective function
\begin{equation*}
\bm{x}^{-}\bm{1}
\geq
\bm{1}^{T}\bm{B}^{\ast}\bm{1}
=
\|\bm{B}^{\ast}\|.
\end{equation*}

It is clear that the maximal feasible solution $\bm{x}=(\bm{1}^{T}\bm{B}^{\ast})^{-}$ attains this bound, which means that $\|\bm{B}^{\ast}\|$ is the minimum of the objective function. 

Note that the solution obtained is unique and can be represented as
\begin{equation*}
\bm{x}
=
\begin{pmatrix}
\|\bm{b}_{1}^{\ast}\|^{-1}
&
\ldots
&
\|\bm{b}_{n}^{\ast}\|^{-1}
\end{pmatrix}^{T},
\end{equation*}
which is the vector of inverses of the maximum column elements in $\bm{B}^{\ast}$.
\end{proof}

Let us verify that the solution $(\bm{1}^{T}\bm{B}^{\ast})^{-}$ of the minimization problem \eqref{P-minxx11x-Bxleqx-1xeq1} is greater or equal to any solution of the maximization problem \eqref{P-maxxx11x-Bxleqx-1xeq1}. Indeed, with the equality $(\bm{1}^{T}\bm{B}^{\ast})^{-}=\bm{B}^{\ast}(\bm{1}^{T}\bm{B}^{\ast})^{-}$, we have for each $k=1,\ldots,n$ that
\begin{equation*}
(\bm{1}^{T}\bm{B}^{\ast})^{-}
=
\bm{B}^{\ast}(\bm{1}^{T}\bm{B}^{\ast})^{-}
=
\bigoplus_{j=1}^{n}
\bm{b}_{j}^{\ast}\|\bm{b}_{j}^{\ast}\|^{-1}
\geq
\bm{b}_{k}^{\ast}\|\bm{b}_{k}^{\ast}\|^{-1},
\end{equation*}
which shows that all normalized columns of the matrix $\bm{B}^{\ast}$, including the solution vectors of problem \eqref{P-maxxx11x-Bxleqx-1xeq1}, are less or equal to the solution of \eqref{P-minxx11x-Bxleqx-1xeq1}.

To conclude this section, we note that the obtained solutions to both maximization and minimization problems involve the calculation of the Kleene star matrix $\bm{B}^{\ast}$ as the main component, and thus have the complexity $O(n^{4})$.

\section{Solution of Multicriteria Pairwise Comparison Problems}
\label{S-SMPCP}

Consider the multiobjective optimization problem \eqref{P-minx_maxijc1ijxixj_maxijcmijxixj-maxbijxjleqxi}, which arises in the solution of the constrained multicriteria pairwise comparison problem using log-Chebyshev approximation. After rewriting the objective functions and inequality constraints in the max-algebra setting, the problem becomes
\begin{equation*}
\begin{aligned}
\min_{\bm{x}>\bm{0}}
&&&
\left(
\bigoplus_{1\leq i,j\leq n}x_{i}^{-1}c_{ij}^{(1)}x_{j},
\ldots,
\bigoplus_{1\leq i,j\leq n}x_{i}^{-1}c_{ij}^{(m)}x_{j}
\right);
\\
\text{s.t.}
&&&
\bigoplus_{1\leq j\leq n}b_{ij}x_{j}
\leq
x_{i},
\qquad
i=1,\ldots,n.
\end{aligned}
\end{equation*}

By using vector and matrix notation, we finally formulate the problem as follows. Given $(n\times n)$-matrices $\bm{C}_{l}$ of pairwise comparisons of $n$ alternatives for criteria $l=1,\ldots,m$, and nonnegative $(n\times n)$-matrix $\bm{B}$ of constraints, find positive $n$-vectors $\bm{x}$ of ratings that solve the multiobjective problem
\begin{equation}
\begin{aligned}
\min_{\bm{x}>\bm{0}}
&&&
(\bm{x}^{-}\bm{C}_{1}\bm{x},\ldots,\bm{x}^{-}\bm{C}_{m}\bm{x});
\\
\text{s.t.}
&&&
\bm{B}\bm{x}
\leq
\bm{x}.
\label{P-minx_xC1x_xCmx-Bxleqx}
\end{aligned}
\end{equation}

In this section, we offer new max-ordering, lexicographic and lexicographic max-ordering optimal solutions to the problem. If the solution obtained is not a unique vector of ratings, this solution is reduced to two the best and worst solution vectors, which most and least differentiate between the alternatives with the highest and lowest ratings.

\subsection{Max-Ordering Solution}

We start with the max-ordering solution technique, which leads to the solution of problem \eqref{P-minx_maxijaijxixj} over the feasible set $X_{0}$ given by \eqref{E-S0}.

In terms of max-algebra, the set $X_{0}$ is the set of regular vectors $\bm{x}$ that solve the inequality $\bm{B}\bm{x}\leq\bm{x}$. The objective function at \eqref{P-minx_maxijaijxixj} takes the form
\begin{equation*}
\max_{1\leq l\leq m}
\bm{x}^{-}\bm{C}_{l}\bm{x}
=
\bm{x}^{-}\bm{A}\bm{x},
\qquad
\bm{A}
=
\bigoplus_{l=1}^{m}
\bm{C}_{l}.
\end{equation*}

Combining the objective function with the constraint leads to the problem
\begin{equation*}
\begin{aligned}
\min_{\bm{x}>\bm{0}}
&&&
\bm{x}^{-}\bm{A}\bm{x};
\\
\text{s.t.}
&&&
\bm{B}\bm{x}
\leq
\bm{x}.
\end{aligned}
\end{equation*}

Since the problem takes the form of \eqref{P-minxAx-Bxleqx}, we apply Theorem~\ref{T-minxAx-Bxleqx} to find the minimum value $\theta$ of the objective function, which is given by \eqref{E-thetaeqtr1nABi1ABik}. Next, we obtain the solution set $X_{1}$ of the problem as the set of vectors
\begin{equation*}
\bm{x}
=
\bm{B}_{1}^{\ast}\bm{u},
\qquad
\bm{B}_{1}
=
\theta^{-1}\bm{A}\oplus\bm{B},
\qquad
\bm{u}\ne\bm{0}.
\end{equation*}

If the columns in the matrix $\bm{G}=\bm{B}_{1}^{\ast}$ are collinear, then any column after normalization can be taken as the unique solution. Otherwise by Theorem~\ref{T-minxAx-Bxleqx-equivalence}, the set $X_{1}$ coincides with the set of regular solutions to the inequality
\begin{equation*}
\bm{B}_{1}\bm{x}
\leq
\bm{x}.
\end{equation*}

It remains to obtain the best and worst differentiating solutions in the set, which are the minimal and maximal solutions of the respective problems 
\begin{equation*}
\begin{aligned}
\max_{\bm{x}>\bm{0}}
&&&
\bm{x}^{-}\bm{1};
\\
\text{s.t.}
&&&
\bm{B}_{1}\bm{x}
\leq
\bm{x},
\quad
\bm{1}^{T}\bm{x}
=
1;
\end{aligned}
\qquad\qquad
\begin{aligned}
\min_{\bm{x}>\bm{0}}
&&&
\bm{x}^{-}\bm{1};
\\
\text{s.t.}
&&&
\bm{B}_{1}\bm{x}
\leq
\bm{x},
\quad
\bm{1}^{T}\bm{x}
=
1.
\end{aligned}
\end{equation*}

By applying Lemma~\ref{L-maxxx11x-Bxleqx-1xeq1}, we find the best differentiating solutions represented in terms of the columns of the matrix $\bm{G}=(\bm{g}_{j})$ as the set of vectors
\begin{equation*}
\bm{x}^{\textup{best}}
=
\bm{g}_{k}\|\bm{g}_{k}\|^{-1},
\qquad
k
=
\arg\max_{1\leq j\leq n}\|\bm{g}_{j}\|
\|(\bm{g}_{j})^{-}\|.
\end{equation*}

If there is a minimal solution in the set, it is taken as the unique minimal best differentiating solution; otherwise, the minimal solution cannot be uniquely determined.

By Lemma~\ref{L-minxx11x-Bxleqx-1xeq1}, the maximal worst differentiating solution is obtained as
\begin{equation*}
\bm{x}^{\textup{worst}}
=
(\bm{1}^{T}\bm{G})^{-}.
\end{equation*}
 
The next statement combines the results of the derivation of all max-ordering solution vectors for problem \eqref{P-minx_xC1x_xCmx-Bxleqx} with the calculation of the best and worst differentiating vectors among them.  
\begin{theorem}
\label{T-minx_xC1x_xCmx-Bxleqx_MO}
Let $\bm{C}_{l}$ for all $l=1,\ldots,m$ be matrices with nonzero spectral radii and $\bm{B}$ be a matrix such that $\mathop\mathrm{Tr}(\bm{B})\leq1$. Define
\begin{gather*}
\theta
=
\bigoplus_{k=1}^{n}
\bigoplus_{0\leq i_{1}+\cdots+i_{k}\leq n-k}
\mathop\mathrm{tr}\nolimits^{1/k}(\bm{A}\bm{B}^{i_{1}}\cdots\bm{A}\bm{B}^{i_{k}}),
\qquad
\bm{A}
=
\bigoplus_{l=1}^{m}
\bm{C}_{l},
\\
\bm{B}_{1}
=
\theta^{-1}\bm{A}
\oplus
\bm{B}.
\end{gather*}
Then, with the notation $\bm{G}=\bm{B}_{1}^{\ast}$, the following statements hold:
\renewcommand{\labelenumi}{(\theenumi)}%
\renewcommand{\theenumi}{\roman{enumi}}%
\begin{enumerate}
\item
All max-ordering solutions of problem \eqref{P-minx_xC1x_xCmx-Bxleqx} are given by the matrix $\bm{G}=(\bm{g}_{j})$ in the parametric form
\begin{equation*}
\bm{x}
=
\bm{G}\bm{u},
\qquad
\bm{u}\ne\bm{0}.
\end{equation*}
\item
The minimal normalized best differentiating solution is given by
\begin{equation*}
\bm{x}^{\textup{best}}
=
\bm{g}_{k}\|\bm{g}_{k}\|^{-1},
\qquad
k
=
\arg\max_{1\leq j\leq n}
\|\bm{g}_{j}\|\|\bm{g}_{j}^{-}\|.
\end{equation*}
\item
The maximal normalized worst differentiating solution is
\begin{equation*}
\bm{x}^{\textup{worst}}
=
(\bm{1}^{T}\bm{G})^{-}.
\end{equation*}
\end{enumerate}
\end{theorem}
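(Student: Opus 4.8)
The plan is to assemble the three assertions from a Chebyshev scalarization of the multiobjective problem followed by the tropical optimization results of Section~\ref{S-TOP}. First I would carry out the max-ordering scalarization: by associativity and commutativity of $\oplus$ one has $\max_{1\leq l\leq m}\bm{x}^{-}\bm{C}_{l}\bm{x}=\bm{x}^{-}\!\left(\bigoplus_{l=1}^{m}\bm{C}_{l}\right)\!\bm{x}=\bm{x}^{-}\bm{A}\bm{x}$, so over the feasible set $X_{0}$ of \eqref{E-S0} the max-ordering version of \eqref{P-minx_xC1x_xCmx-Bxleqx} is exactly problem \eqref{P-minx_maxijaijxixj}, i.e. $\min\{\bm{x}^{-}\bm{A}\bm{x}:\bm{B}\bm{x}\leq\bm{x}\}$, which has the form \eqref{P-minxAx-Bxleqx}. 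Since $\bm{A}\geq\bm{C}_{l}$ for every $l$ and each $\bm{C}_{l}$ has nonzero spectral radius, $\bm{A}$ has nonzero spectral radius as well; together with $\mathop\mathrm{Tr}(\bm{B})\leq1$ this permits invoking Theorem~\ref{T-minxAx-Bxleqx}, which gives the stated value $\theta$ of the minimum and the parametric family $\bm{x}=\bm{G}\bm{u}$ with $\bm{G}=(\theta^{-1}\bm{A}\oplus\bm{B})^{\ast}=\bm{B}_{1}^{\ast}$. Because $\bm{B}_{1}^{\ast}\geq\bm{I}$, the vectors $\bm{G}\bm{u}$ are regular precisely when $\bm{u}>\bm{0}$ (any $\bm{u}\ne\bm{0}$ can be replaced by a positive vector producing a collinear, hence equal-up-to-scaling, solution), which yields part (i).

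For parts (ii) and (iii) I would first use Theorem~\ref{T-minxAx-Bxleqx-equivalence} to identify the solution set $X_{1}$ with the set of all regular solutions of the inequality $\bm{B}_{1}\bm{x}\leq\bm{x}$. By the definition of the best and worst differentiating vectors, these are the minimal normalized solution of $\max\{\bm{x}^{-}\bm{1}:\bm{B}_{1}\bm{x}\leq\bm{x},\ \bm{1}^{T}\bm{x}=1\}$ and the maximal normalized solution of the analogous minimization problem — that is, problems \eqref{P-maxxx11x-Bxleqx-1xeq1} and \eqref{P-minxx11x-Bxleqx-1xeq1} with $\bm{B}$ replaced by $\bm{B}_{1}$. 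Here $\mathop\mathrm{Tr}(\bm{B}_{1})\leq1$ holds since the Kleene star $\bm{B}_{1}^{\ast}$ is well defined (guaranteed by Theorem~\ref{T-minxAx-Bxleqx}), so Lemmas~\ref{L-maxxx11x-Bxleqx-1xeq1} and \ref{L-minx11x-Bxleqx-1xeq1} apply with $\bm{B}\mapsto\bm{B}_{1}$ and $\bm{B}_{1}^{\ast}=\bm{G}=(\bm{g}_{j})$. The first lemma then delivers $\bm{x}^{\textup{best}}=\bm{g}_{k}\|\bm{g}_{k}\|^{-1}$ with $k=\arg\max_{j}\|\bm{g}_{j}\|\|\bm{g}_{j}^{-}\|$ (unique when the corresponding column is dominated by the others, otherwise given by several vectors), and the second lemma delivers $\bm{x}^{\textup{worst}}=(\bm{1}^{T}\bm{G})^{-}$, establishing (ii) and (iii).

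The anticipated main obstacle is not a deep step but the bookkeeping around the trace/regularity hypotheses: one must verify that the substitution $\bm{B}\mapsto\bm{B}_{1}=\theta^{-1}\bm{A}\oplus\bm{B}$ is legitimate for the Hilbert-seminorm lemmas, i.e. that $\mathop\mathrm{Tr}(\bm{B}_{1})\leq1$, which is exactly what makes $\bm{G}=\bm{B}_{1}^{\ast}$ meaningful in Theorem~\ref{T-minxAx-Bxleqx}, and that the passage from the abstract solution set $X_{1}$ to the inequality $\bm{B}_{1}\bm{x}\leq\bm{x}$ is justified, which is precisely the content of Theorem~\ref{T-minxAx-Bxleqx-equivalence}. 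A further minor case to note explicitly is the degenerate situation in which the columns of $\bm{G}$ are collinear: then the max-ordering solution is unique up to a positive factor and, after normalization, the best and worst differentiating vectors coincide with that single vector.
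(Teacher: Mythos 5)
Your proposal is correct and follows essentially the same route as the paper: Chebyshev scalarization to reduce to $\min\{\bm{x}^{-}\bm{A}\bm{x}:\bm{B}\bm{x}\leq\bm{x}\}$, Theorem~\ref{T-minxAx-Bxleqx} for $\theta$ and the generating matrix $\bm{G}=\bm{B}_{1}^{\ast}$, Theorem~\ref{T-minxAx-Bxleqx-equivalence} to recast the solution set as $\bm{B}_{1}\bm{x}\leq\bm{x}$, and Lemmas~\ref{L-maxxx11x-Bxleqx-1xeq1} and \ref{L-minx11x-Bxleqx-1xeq1} for the best and worst differentiating vectors. The only quibble is your parenthetical claim that any $\bm{u}\ne\bm{0}$ yields a solution collinear to one with $\bm{u}>\bm{0}$ — that is not true in general, though here $\bm{G}$ is positive so every $\bm{u}\ne\bm{0}$ already gives a regular solution, and the discrepancy between $\bm{u}\ne\bm{0}$ and $\bm{u}>\bm{0}$ is present in the paper's own statements.
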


We observe that if the matrix $\bm{G}$ produces a unique solution, then the minimal best and maximal worst differentiating solutions obviously coincide. This allows one not to check whether all columns in $\bm{G}$ are collinear or not. Instead one can directly calculate the best and worst solutions, which provide a common result if the solution is unique. 

The above result is valid for the unconstrained problem which is obtained by setting $\bm{B}=\bm{0}$. In this case, the minimum value $\theta$ of the objective function is reduced to the spectral radius of the matrix $\bm{A}$.

Finally, note that with additional computational effort required to find the matrix $\bm{A}$, the computational complexity of the solution is $O(mn^{2}+n^{5})$.

\subsection{Lexicographic Ordering Solution}

Consider the implementation of the lexicographic ordering technique which involves a series of problems \eqref{P-minx_maxijcijsxixj}. We solve problem \eqref{P-minx_xC1x_xCmx-Bxleqx} in no more than $m$ steps each consisting in the minimization of a scalar objective function over a feasible set given by the solutions of the previous step.

At the step $s=1$, we use the symbol $\bm{B}_{0}=\bm{B}$ and formulate the problem
\begin{equation*}
\begin{aligned}
\min_{\bm{x}>\bm{0}}
&&&
\bm{x}^{-}\bm{C}_{1}\bm{x};
\\
\text{s.t.}
&&&
\bm{B}_{0}\bm{x}
\leq
\bm{x}.
\end{aligned}
\end{equation*}

To solve the problem, we apply Theorem~\ref{T-minxAx-Bxleqx} to calculate the minimum
\begin{equation*}
\theta_{1}
=
\bigoplus_{k=1}^{n}
\bigoplus_{0\leq i_{1}+\cdots+i_{k}\leq n-k}
\mathop\mathrm{tr}\nolimits^{1/k}(\bm{C}_{1}\bm{B}_{0}^{i_{1}}\cdots\bm{C}_{1}\bm{B}_{0}^{i_{k}}).
\end{equation*}

Then, we find the solution set, which is given in parametric form by 
\begin{equation*}
\bm{x}
=
\bm{B}_{1}^{\ast}\bm{u},
\qquad
\bm{B}_{1}
=
\theta_{1}^{-1}\bm{C}_{1}
\oplus
\bm{B}_{0},
\qquad
\bm{u}
\ne
\bm{0},
\end{equation*}
or according to Theorem~\ref{T-minxAx-Bxleqx-equivalence}, as the set of solutions of the inequality 
\begin{equation*}
\bm{B}_{1}\bm{x}
\leq
\bm{x}.
\end{equation*}

We take the last inequality as a constraint that determines the set $X_{1}$ and formulate the problem of step $s=2$ as follows:
\begin{equation*}
\begin{aligned}
\min_{\bm{x}>\bm{0}}
&&&
\bm{x}^{-}\bm{C}_{2}\bm{x};
\\
\text{s.t.}
&&&
\bm{B}_{1}\bm{x}
\leq
\bm{x}.
\end{aligned}
\end{equation*}

The minimum in the problem is given by
\begin{equation*}
\theta_{2}
=
\bigoplus_{k=1}^{n}
\bigoplus_{0\leq i_{1}+\cdots+i_{k}\leq n-k}
\mathop\mathrm{tr}\nolimits^{1/k}(\bm{C}_{2}\bm{B}_{1}^{i_{1}}\cdots\bm{C}_{2}\bm{B}_{1}^{i_{k}}),
\end{equation*}
and the set of solutions $X_{2}$ is defined as
\begin{equation*}
\bm{x}
=
\bm{B}_{2}^{\ast}\bm{u},
\qquad
\bm{B}_{2}
=
\theta_{2}^{-1}\bm{C}_{2}
\oplus
\bm{B}_{1},
\qquad
\bm{u}
\ne
\bm{0}.
\end{equation*}

We repeat the procedure for each step $s=3,\ldots,m$. Upon completion of step $s=m$, we arrive at the lexicographic solution given by
\begin{equation*}
\bm{x}
=
\bm{B}_{m}^{\ast}\bm{u},
\qquad
\bm{u}
\ne
\bm{0}.
\end{equation*}

If the solution obtained is not unique (up to a positive factor), we calculate the best and worst differentiating solution vectors. 

We summarize the above computational scheme in the following form.
\begin{theorem}
\label{T-minx_xC1x_xCmx-Bxleqx_LO}
Let $\bm{C}_{l}$ for all $l=1,\ldots,m$ be matrices with nonzero spectral radii and $\bm{B}$ be a matrix such that $\mathop\mathrm{Tr}(\bm{B})\leq1$. Denote $\bm{B}_{0}=\bm{B}$ and define the recurrence relations
\begin{gather*}
\theta_{s}
=
\bigoplus_{k=1}^{n}
\bigoplus_{0\leq i_{1}+\cdots+i_{k}\leq n-k}
\mathop\mathrm{tr}\nolimits^{1/k}(\bm{C}_{s}\bm{B}_{s-1}^{i_{1}}\cdots\bm{C}_{s}\bm{B}_{s-1}^{i_{k}}),
\qquad
\\
\bm{B}_{s}
=
\theta_{s}^{-1}\bm{C}_{s}
\oplus
\bm{B}_{s-1},
\qquad
s=1,\ldots,m.
\end{gather*}
Then, with the notation $\bm{G}=\bm{B}_{m}^{\ast}$, the following statements hold:
\renewcommand{\labelenumi}{(\theenumi)}%
\renewcommand{\theenumi}{\roman{enumi}}%
\begin{enumerate}
\item
All lexicographic ordering solutions of problem \eqref{P-minx_xC1x_xCmx-Bxleqx} are given by the matrix $\bm{G}=(\bm{g}_{j})$ in the parametric form
\begin{equation*}
\bm{x}
=
\bm{G}\bm{u},
\qquad
\bm{u}\ne\bm{0}.
\end{equation*}
\item
The minimal normalized best differentiating solution is given by
\begin{equation*}
\bm{x}^{\textup{best}}
=
\bm{g}_{k}\|\bm{g}_{k}\|^{-1},
\qquad
k
=
\arg\max_{1\leq j\leq n}
\|\bm{g}_{j}\|\|\bm{g}_{j}^{-}\|.
\end{equation*}
\item
The maximal normalized worst differentiating solution is
\begin{equation*}
\bm{x}^{\textup{worst}}
=
(\bm{1}^{T}\bm{G})^{-}.
\end{equation*}
\end{enumerate}
\end{theorem}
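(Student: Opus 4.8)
The plan is to execute the lexicographic procedure of problems~\eqref{P-minx_maxijcijsxixj} one step at a time and to prove by induction on $s=0,1,\ldots,m$ that the feasible set $X_s$ of the $(s+1)$-th step coincides with the set of regular solutions of the inequality $\bm{B}_s\bm{x}\leq\bm{x}$, that this set is nonempty and equals the parametric family $\{\bm{B}_s^{\ast}\bm{u}:\bm{u}\ne\bm{0}\}$, and that $\mathop\mathrm{Tr}(\bm{B}_s)\leq1$ so that the Kleene star $\bm{B}_s^{\ast}$ is defined. Granting this for $s=m$, statement~(i) is immediate with $\bm{G}=\bm{B}_m^{\ast}$, and statements~(ii) and~(iii) follow by substituting the constraint $\bm{B}_m\bm{x}\leq\bm{x}$ into the two normalized Hilbert-seminorm problems.

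The base case $s=0$ holds because $\bm{B}_0=\bm{B}$ satisfies $\mathop\mathrm{Tr}(\bm{B}_0)\leq1$ by hypothesis, $X_0$ is the regular solution set of $\bm{B}_0\bm{x}\leq\bm{x}$ by~\eqref{E-S0}, and Lemma~\ref{L-Bxleqx} shows this set is nonempty and equal to $\{\bm{B}_0^{\ast}\bm{u}:\bm{u}\ne\bm{0}\}$. For the inductive step, assume the claim for $s-1$. The $s$-th subproblem~\eqref{P-minx_maxijcijsxixj} is to minimize $\bm{x}^{-}\bm{C}_s\bm{x}$ over the regular solutions of $\bm{B}_{s-1}\bm{x}\leq\bm{x}$, which is exactly problem~\eqref{P-minxAx-Bxleqx} with $\bm{A}=\bm{C}_s$ and $\bm{B}$ replaced by $\bm{B}_{s-1}$. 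Since $\bm{C}_s$ has nonzero spectral radius and $\mathop\mathrm{Tr}(\bm{B}_{s-1})\leq1$, Theorem~\ref{T-minxAx-Bxleqx} gives the minimum value $\theta_s$ in the stated form (it is~\eqref{E-thetaeqtr1nABi1ABik} for the pair $\bm{C}_s,\bm{B}_{s-1}$) together with the solution set $\{\bm{B}_s^{\ast}\bm{u}:\bm{u}\ne\bm{0}\}$, where $\bm{B}_s=\theta_s^{-1}\bm{C}_s\oplus\bm{B}_{s-1}$; Theorem~\ref{T-minxAx-Bxleqx-equivalence} then identifies this set with the regular solutions of $\bm{B}_s\bm{x}\leq\bm{x}$. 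This set is nonempty, so Lemma~\ref{L-Bxleqx} forces $\mathop\mathrm{Tr}(\bm{B}_s)\leq1$, which closes the induction. If $X_s$ collapses to a single ray at some step $s<m$, scale-invariance of each objective $\bm{x}^{-}\bm{C}_{s+1}\bm{x}$ makes every subsequent $X$ equal to it, so $\bm{B}_m^{\ast}$ still generates exactly that ray and the stated $\bm{G}$ is correct.

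It remains to read off (ii) and (iii) from $X_m=\{\bm{G}\bm{u}:\bm{u}\ne\bm{0}\}$ with $\mathop\mathrm{Tr}(\bm{B}_m)\leq1$. The minimal normalized best differentiating solution solves the maximization problem~\eqref{P-maxxx11x-Bxleqx-1xeq1} with $\bm{B}=\bm{B}_m$; applying Lemma~\ref{L-maxxx11x-Bxleqx-1xeq1} and writing $\bm{G}=\bm{B}_m^{\ast}=(\bm{g}_j)$ gives $\bm{x}^{\textup{best}}=\bm{g}_k\|\bm{g}_k\|^{-1}$ with $k=\arg\max_{1\le j\le n}\|\bm{g}_j\|\|\bm{g}_j^{-}\|$. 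The maximal normalized worst differentiating solution solves the minimization problem~\eqref{P-minxx11x-Bxleqx-1xeq1} with $\bm{B}=\bm{B}_m$; Lemma~\ref{L-minx11x-Bxleqx-1xeq1} gives $\bm{x}^{\textup{worst}}=(\bm{1}^{T}\bm{B}_m^{\ast})^{-}=(\bm{1}^{T}\bm{G})^{-}$.

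The main obstacle I anticipate is purely the inductive bookkeeping: one must verify at every step that the hypotheses of Theorem~\ref{T-minxAx-Bxleqx}, Theorem~\ref{T-minxAx-Bxleqx-equivalence} and Lemmas~\ref{L-maxxx11x-Bxleqx-1xeq1}, \ref{L-minx11x-Bxleqx-1xeq1} remain in force — in particular that $\mathop\mathrm{Tr}(\bm{B}_s)\leq1$ and that $X_s\ne\emptyset$ propagate, which is precisely what makes $\bm{B}_s^{\ast}$ well defined — and that each lexicographic subproblem~\eqref{P-minx_maxijcijsxixj} is correctly cast in the canonical form~\eqref{P-minxAx-Bxleqx}. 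Once these matchings are in place, the rest is direct substitution into already-established results.
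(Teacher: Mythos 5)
Your proposal is correct and follows essentially the same route as the paper: the paper's derivation preceding the theorem is exactly your induction --- cast each lexicographic subproblem as an instance of problem~\eqref{P-minxAx-Bxleqx} with $\bm{A}=\bm{C}_{s}$ and constraint matrix $\bm{B}_{s-1}$, apply Theorem~\ref{T-minxAx-Bxleqx} to get $\theta_{s}$ and the generator $\bm{B}_{s}^{\ast}$, convert back to the inequality $\bm{B}_{s}\bm{x}\leq\bm{x}$ via Theorem~\ref{T-minxAx-Bxleqx-equivalence}, and finish with Lemmas~\ref{L-maxxx11x-Bxleqx-1xeq1} and~\ref{L-minx11x-Bxleqx-1xeq1}. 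Your explicit check that $\mathop\mathrm{Tr}(\bm{B}_{s})\leq1$ propagates (via nonemptiness of the solution set and Lemma~\ref{L-Bxleqx}) is a small point the paper leaves implicit, but it does not change the argument.
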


Suppose that for some step $s<m$, all columns in the matrix $\bm{B}_{s}^{\ast}$ are collinear, and thus the matrix generates a unique solution vector. In this case, further steps cannot change the obtained solution and thus can be avoided to stop the procedure. Note that if a matrix $\bm{B}_{s}^{\ast}$ generates a unique solution, then both the minimal best and maximal worst normalized vectors obtained from $\bm{B}_{s}^{\ast}$ are the same. As a result, we can calculate these vectors to stop the procedure if these vectors coincide or continue otherwise.

The complexity of the solution can be estimated not greater than $O(mn^{5})$.

\subsection{Lexicographic Max-Ordering Solution}

We now describe a solution technique based on the lexicographic max-ordering optimality principle. Similar to the lexicographic ordering solution, we handle problem \eqref{P-minx_xC1x_xCmx-Bxleqx} by solving a series of problems, where each problem has a scalar objective function and inequality constraint provided by the solution of the previous problem. According to the computational scheme given by formulas \eqref{E-S0}, \eqref{P-minx_maxkl1maxijcijlxixj} and \eqref{E-Is}, the solution involves no more than $m$ steps, which are described in the framework of max-algebra as follows.

We set $\bm{B}_{0}=\bm{B}$ and define $X_{0}=\{\bm{x}>\bm{0}|\ \bm{B}_{0}\bm{x}\leq\bm{x}\}$, $I_{0}=\{1,\ldots,m\}$.

The step $s=1$ starts with calculating the matrix
\begin{equation*}
\bm{A}_{1}
=
\bigoplus_{l\in I_{0}}\bm{C}_{l}
=
\bm{C}_{1}\oplus\cdots\oplus\bm{C}_{m}.
\end{equation*}

The purpose of this step is to solve the problem
\begin{equation*}
\begin{aligned}
\min_{\bm{x}>\bm{0}}
&&&
\bm{x}^{-}\bm{A}_{1}\bm{x};
\\
\text{s.t.}
&&&
\bm{B}_{0}\bm{x}
\leq
\bm{x}.
\end{aligned}
\end{equation*}

We apply Theorem~\ref{T-minxAx-Bxleqx} to find the minimum of the objective function
\begin{equation*}
\theta_{1}
=
\bigoplus_{k=1}^{n}
\bigoplus_{0\leq i_{1}+\cdots+i_{k}\leq n-k}
\mathop\mathrm{tr}\nolimits^{1/k}(\bm{A}_{1}\bm{B}_{0}^{i_{1}}\cdots\bm{A}_{1}\bm{B}_{0}^{i_{k}}),
\end{equation*}
and then obtain all solutions in the parametric form
\begin{equation*}
\bm{x}
=
\bm{B}_{1}^{\ast}\bm{u},
\qquad
\bm{B}_{1}
=
\theta_{1}^{-1}\bm{A}_{1}
\oplus
\bm{B}_{0},
\qquad
\bm{u}
\ne
\bm{0}.
\end{equation*}
 
By Theorem~\ref{T-minxAx-Bxleqx-equivalence} these solutions can be defined by the inequality $\bm{B}_{1}\bm{x}\leq\bm{x}$ to provide the new feasible set 
\begin{equation*}
X_{1}
=
\{\bm{x}>\bm{0}:\ \bm{B}_{1}\bm{x}\leq\bm{x}\}.
\end{equation*}

To prepare the next step, we find the minimums
\begin{equation*}
\theta_{1l}
=
\min_{\bm{x}\in X_{1}}
\bm{x}^{-}\bm{C}_{l}\bm{x}
=
\bigoplus_{k=1}^{n}
\bigoplus_{0\leq i_{1}+\cdots+i_{k}\leq n-k}
\mathop\mathrm{tr}\nolimits^{1/n}(\bm{C}_{l}\bm{B}_{1}^{i_{1}}\cdots\bm{C}_{l}\bm{B}_{1}^{i_{k}}),
\qquad
l\in I_{0}.
\end{equation*}

At the step $s=2$, we form the matrix
\begin{equation*}
\bm{A}_{2}
=
\bigoplus_{l\in I_{1}}\bm{C}_{l},
\qquad
I_{1}
=
\left\{
l\in I_{0}\
:\
\theta_{1}
>
\theta_{1l}
\right\},
\end{equation*}
and then solve the problem
\begin{equation*}
\begin{aligned}
\min_{\bm{x}>\bm{0}}
&&&
\bm{x}^{-}\bm{A}_{2}\bm{x};
\\
\text{s.t.}
&&&
\bm{B}_{1}\bm{x}
\leq
\bm{x}.
\end{aligned}
\end{equation*}

The minimum in the problem is given by
\begin{equation*}
\theta_{2}
=
\bigoplus_{k=1}^{n}
\bigoplus_{0\leq i_{1}+\cdots+i_{k}\leq n-k}
\mathop\mathrm{tr}\nolimits^{1/k}(\bm{A}_{2}\bm{B}_{1}^{i_{1}}\cdots\bm{A}_{2}\bm{B}_{1}^{i_{k}}),
\end{equation*}
and the set of solutions is
\begin{equation*}
X_{2}
=
\{\bm{x}>\bm{0}:\ \bm{B}_{2}\bm{x}\leq\bm{x}\},
\qquad
\bm{B}_{2}
=
\theta_{2}^{-1}\bm{A}_{2}
\oplus
\bm{B}_{1}.
\end{equation*}

To complete this step, we calculate the values
\begin{equation*}
\theta_{2l}
=
\bigoplus_{k=1}^{n}
\bigoplus_{0\leq i_{1}+\cdots+i_{k}\leq n-k}
\mathop\mathrm{tr}\nolimits^{1/n}(\bm{C}_{l}\bm{B}_{2}^{i_{1}}\cdots\bm{C}_{l}\bm{B}_{2}^{i_{k}}),
\qquad
l\in I_{1},
\end{equation*}
and then define
\begin{equation*}
\bm{A}_{3}
=
\bigoplus_{l\in I_{2}}\bm{C}_{l},
\qquad
I_{2}
=
\left\{
l\in I_{1}\
:\
\theta_{2}
>
\theta_{2l}
\right\}.
\end{equation*}

We repeat the procedure for all remaining steps $s\leq m$. The procedure stops at step $s$ if the set $X_{s}$ consists of a single solution, or $I_{s}=\emptyset$. 

The above solution scheme can be summarized as follows.
\begin{theorem}
\label{T-minx_xC1x_xCmx-Bxleqx_LMO}
Let $\bm{C}_{l}$ for all $l=1,\ldots,m$ be matrices with nonzero spectral radii and $\bm{B}$ be a matrix such that $\mathop\mathrm{Tr}(\bm{B})\leq1$. Denote $\bm{B}_{0}=\bm{B}$ and $I_{0}=\{1,\ldots,m\}$, and define the recurrence relations
\begin{gather*}
\theta_{s}
=
\bigoplus_{k=1}^{n}
\bigoplus_{0\leq i_{1}+\cdots+i_{k}\leq n-k}
\mathop\mathrm{tr}\nolimits^{1/k}(\bm{A}_{s}\bm{B}_{s-1}^{i_{1}}\cdots\bm{A}_{s}\bm{B}_{s-1}^{i_{k}}),
\qquad
\bm{A}_{s}
=
\bigoplus_{l\in I_{s-1}}\bm{C}_{l},
\\
\bm{B}_{s}
=
\theta_{s}^{-1}\bm{A}_{s}
\oplus
\bm{B}_{s-1},
\qquad
I_{s}
=
\left\{
l\in I_{s-1}\
:\
\theta_{s}
>
\theta_{sl}
\right\},
\\
\theta_{sl}
=
\bigoplus_{k=1}^{n}
\bigoplus_{0\leq i_{1}+\cdots+i_{k}\leq n-k}
\mathop\mathrm{tr}\nolimits^{1/n}(\bm{C}_{l}\bm{B}_{s}^{i_{1}}\cdots\bm{C}_{l}\bm{B}_{s}^{i_{k}}),
\quad
l\in I_{s-1},
\qquad
s=1,\ldots,m.
\end{gather*}
Then, with the notation $\bm{G}=\bm{B}_{m}^{\ast}$, the following statements hold:
\renewcommand{\labelenumi}{(\theenumi)}%
\renewcommand{\theenumi}{\roman{enumi}}%
\begin{enumerate}
\item
All lexicographic max-ordering solutions of problem \eqref{P-minx_xC1x_xCmx-Bxleqx} are given by the matrix $\bm{G}=(\bm{g}_{j})$ in the parametric form
\begin{equation*}
\bm{x}
=
\bm{G}\bm{u},
\qquad
\bm{u}\ne\bm{0}.
\end{equation*}
\item
The minimal normalized best differentiating solution is given by
\begin{equation*}
\bm{x}^{\textup{best}}
=
\bm{g}_{k}\|\bm{g}_{k}\|^{-1},
\qquad
k
=
\arg\max_{1\leq j\leq n}
\|\bm{g}_{j}\|\|\bm{g}_{j}^{-}\|.
\end{equation*}
\item
The maximal normalized worst differentiating solution is
\begin{equation*}
\bm{x}^{\textup{worst}}
=
(\bm{1}^{T}\bm{G})^{-}.
\end{equation*}
\end{enumerate}
\end{theorem}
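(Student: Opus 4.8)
The plan is to read the iterative scheme of formulas~\eqref{E-S0}, \eqref{P-minx_maxkl1maxijcijlxixj} and~\eqref{E-Is} as a finite chain of constrained conjugate-quadratic minimization problems, and to prove by induction on the step index $s$ that the solution set produced at step $s$ keeps the canonical form $X_{s}=\{\bm{x}>\bm{0}:\bm{B}_{s}\bm{x}\le\bm{x}\}$ with $\bm{B}_{s}$ positive and $\mathop\mathrm{Tr}(\bm{B}_{s})\le1$; this invariant is exactly what lets Theorem~\ref{T-minxAx-Bxleqx} and Theorem~\ref{T-minxAx-Bxleqx-equivalence} be reapplied at the next step. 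For the base case $s=1$ we put $\bm{B}_{0}=\bm{B}$, so $X_{0}$ from~\eqref{E-S0} has the required form with $\mathop\mathrm{Tr}(\bm{B}_{0})\le1$. By associativity of $\oplus$, the max-ordering scalarization of the step-$1$ objective over $I_{0}=\{1,\dots,m\}$ in~\eqref{P-minx_maxkl1maxijcijlxixj} equals $\bm{x}^{-}\bm{A}_{1}\bm{x}$ with $\bm{A}_{1}=\bigoplus_{l\in I_{0}}\bm{C}_{l}$; since $\bm{A}_{1}\ge\bm{C}_{l}$ entrywise and the spectral radius is monotone, $\bm{A}_{1}$ has nonzero spectral radius. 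Hence the step-$1$ problem is an instance of~\eqref{P-minxAx-Bxleqx}: Theorem~\ref{T-minxAx-Bxleqx} gives the minimum $\theta_{1}$ in the stated form together with the parametric solutions $\bm{x}=\bm{B}_{1}^{\ast}\bm{u}$, $\bm{B}_{1}=\theta_{1}^{-1}\bm{A}_{1}\oplus\bm{B}_{0}$, and Theorem~\ref{T-minxAx-Bxleqx-equivalence} rewrites the solution set as $X_{1}=\{\bm{x}>\bm{0}:\bm{B}_{1}\bm{x}\le\bm{x}\}$. Existence of a regular solution of $\bm{B}_{1}\bm{x}\le\bm{x}$ forces $\mathop\mathrm{Tr}(\bm{B}_{1})\le1$ by Lemma~\ref{L-Bxleqx}, and $\bm{B}_{1}\ge\theta_{1}^{-1}\bm{A}_{1}>\bm{0}$, so the invariant is restored.

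Next I would justify the index update~\eqref{E-Is}. Applying Theorem~\ref{T-minxAx-Bxleqx} once more with $\bm{A}=\bm{C}_{l}$ and $\bm{B}=\bm{B}_{1}$ gives $\min_{\bm{x}\in X_{1}}\bm{x}^{-}\bm{C}_{l}\bm{x}=\theta_{1l}$ for each $l\in I_{0}$. Since $X_{1}$ is precisely the argmin set of the step-$1$ problem, every $\bm{x}\in X_{1}$ satisfies $\max_{l\in I_{0}}\bm{x}^{-}\bm{C}_{l}\bm{x}=\theta_{1}$, whence $\theta_{1l}\le\theta_{1}$ for all $l$; the criteria attaining $\theta_{1l}=\theta_{1}$ cannot be improved on $X_{1}$, and as $X_{s}\subseteq X_{1}$ for all later $s$ they stay at that value, so they may be discarded without affecting lexicographic max-ordering optimality---this is the rule $I_{1}=\{l\in I_{0}:\theta_{1}>\theta_{1l}\}$. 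The inductive step repeats this verbatim: given $X_{s-1}=\{\bm{x}>\bm{0}:\bm{B}_{s-1}\bm{x}\le\bm{x}\}$ with $\mathop\mathrm{Tr}(\bm{B}_{s-1})\le1$ and $I_{s-1}\ne\emptyset$, set $\bm{A}_{s}=\bigoplus_{l\in I_{s-1}}\bm{C}_{l}$ (again with nonzero spectral radius), read off $\theta_{s}$ and $X_{s}=\{\bm{x}>\bm{0}:\bm{B}_{s}\bm{x}\le\bm{x}\}$ with $\bm{B}_{s}=\theta_{s}^{-1}\bm{A}_{s}\oplus\bm{B}_{s-1}$, $\mathop\mathrm{Tr}(\bm{B}_{s})\le1$, and form $\theta_{sl}$ and $I_{s}$. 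The procedure halts, after at most $m$ steps, when $I_{s}=\emptyset$ or the columns of $\bm{B}_{s}^{\ast}$ become collinear; writing $\bm{B}_{m}$ for the matrix in force at termination (no later step can change the solution) and $\bm{G}=\bm{B}_{m}^{\ast}$, statement~(i) follows from $X_{m}=\{\bm{x}>\bm{0}:\bm{B}_{m}\bm{x}\le\bm{x}\}$ and Lemma~\ref{L-Bxleqx}.

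It remains to extract the best and worst differentiating vectors over $X_{m}$, that is, the minimal normalized solution of the Hilbert-seminorm maximization and the maximal normalized solution of its minimization in~\eqref{P-maxmaxi1xi-maxixi1-minmaxi1xi-maxixi1}. In max-algebra these become problems~\eqref{P-maxxx11x-Bxleqx-1xeq1} and~\eqref{P-minxx11x-Bxleqx-1xeq1} with the positive matrix $\bm{B}_{m}$, which satisfies $\mathop\mathrm{Tr}(\bm{B}_{m})\le1$. Lemma~\ref{L-maxxx11x-Bxleqx-1xeq1} then yields statement~(ii), namely $\bm{x}^{\textup{best}}=\bm{g}_{k}\|\bm{g}_{k}\|^{-1}$ with $k=\arg\max_{1\le j\le n}\|\bm{g}_{j}\|\|\bm{g}_{j}^{-}\|$ for $\bm{G}=(\bm{g}_{j})=\bm{B}_{m}^{\ast}$, and Lemma~\ref{L-minx11x-Bxleqx-1xeq1} yields statement~(iii), $\bm{x}^{\textup{worst}}=(\bm{1}^{T}\bm{B}_{m}^{\ast})^{-}=(\bm{1}^{T}\bm{G})^{-}$. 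When $X_{m}$ is already a single ray the two formulas return collinear vectors, consistent with the unique-solution case.

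The algebraic substitutions are direct citations of earlier results, so I expect the real obstacle to be the inductive bookkeeping: above all, showing that passing to $\bm{A}_{s}=\bigoplus_{l\in I_{s-1}}\bm{C}_{l}$ and then applying the freezing rule~\eqref{E-Is} faithfully realizes the lexicographic max-ordering optimality principle and that the procedure does terminate within $m$ steps, and---more routinely but still needing care---that $\mathop\mathrm{Tr}(\bm{B}_{s})\le1$ and the nonzero spectral radius of $\bm{A}_{s}$ are inherited at every step, so that Theorems~\ref{T-minxAx-Bxleqx} and~\ref{T-minxAx-Bxleqx-equivalence} and Lemmas~\ref{L-Bxleqx}, \ref{L-maxxx11x-Bxleqx-1xeq1} and~\ref{L-minx11x-Bxleqx-1xeq1} remain applicable throughout.
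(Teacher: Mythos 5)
Your proposal is correct and follows essentially the same route as the paper, which establishes this theorem by the very iteration you describe: repeated application of Theorem~\ref{T-minxAx-Bxleqx} and Theorem~\ref{T-minxAx-Bxleqx-equivalence} to keep each solution set in the form $\{\bm{x}>\bm{0}:\bm{B}_{s}\bm{x}\leq\bm{x}\}$, followed by Lemmas~\ref{L-maxxx11x-Bxleqx-1xeq1} and \ref{L-minx11x-Bxleqx-1xeq1} for the best and worst differentiating vectors. Your explicit verification that $\mathop\mathrm{Tr}(\bm{B}_{s})\leq1$ and the nonzero spectral radius of $\bm{A}_{s}$ are inherited at each step is a welcome addition that the paper leaves implicit.
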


Considering evaluation of the minimums $\theta_{sl}$ at each step $s$, we see that the computational complexity of the solution is not more than $O(m^{2}n^{5})$.

\section{Illustrative Examples}
\label{S-IE}

In this section, we present numerical examples intended to illustrate the solution technique developed. We consider a constrained multicriteria problem of pairwise comparisons, which is formulated as a multiobjective optimization problem of constrained log-Chebyshev matrix approximation, and then solved in the framework of tropical optimization.

Suppose that there are $n=4$ alternatives that are compared in pairs according to $m=4$ criteria. The results of comparisons are given by the following pairwise comparison matrices:
\begin{gather*}
\bm{C}_{1}
=
\left(
\begin{array}{cccc}
  1 &   2 & 3 &   4 \\
1/2 &   1 & 3 &   2 \\
1/3 & 1/3 & 1 & 1/3 \\
1/4 & 1/2 & 3 &   1
\end{array}
\right),
\qquad
\bm{C}_{2}
=
\left(
\begin{array}{cccc}
  1 &   2 &   3 & 4 \\
1/2 &   1 &   2 & 3 \\
1/3 & 1/2 &   1 & 2 \\
1/4 & 1/3 & 1/2 & 1
\end{array}
\right),
\\
\bm{C}_{3}
=
\left(
\begin{array}{cccc}
  1 &   3 & 2 & 3 \\
1/3 &   1 & 2 & 4 \\
1/2 & 1/2 & 1 & 1 \\
1/3 & 1/4 & 1 & 1
\end{array}
\right),
\qquad
\bm{C}_{4}
=
\left(
\begin{array}{cccc}
  1 &   2 &   2 & 1 \\
1/2 &   1 & 1/2 & 3 \\
1/2 &   2 &   1 & 2 \\
  1 & 1/3 & 1/2 & 1
\end{array}
\right).
\end{gather*}

The problem is to evaluate the vector $\bm{x}=(x_{1},x_{2},x_{3},x_{4})^{T}$ of individual ratings of alternatives subject to the constraint $x_{3}\geq x_{4}$, which specifies that the rating of  alternative $3$ cannot be less than the rating of alternative $4$.  

In terms of max-algebra, the problem takes the form of \eqref{P-minx_xC1x_xCmx-Bxleqx} where the constraint matrix is given by
\begin{equation*}
\bm{B}
=
\left(
\begin{array}{cccc}
0 & 0 & 0 & 0 \\
0 & 0 & 0 & 0 \\
0 & 0 & 0 & 1 \\
0 & 0 & 0 & 0
\end{array}
\right).
\end{equation*}

Note that a straightforward analysis of the pairwise comparison matrices shows that according to their entries, alternatives $1$ and $2$ should respectively receive the first and second highest ratings. Alternatives $3$ and $4$ have lower ratings and must satisfy the constraint specified in the problem. 

Below we describe solutions obtained according to the max-ordering, lexicographic ordering and lexicographic max-ordering principles of optimality.

\subsection{Max-Ordering Solution}

To obtain the max-ordering solution, we apply Theorem~\ref{T-minx_xC1x_xCmx-Bxleqx_MO}. Under the same notation as in this theorem, we calculate
\begin{equation*}
\bm{A}
=
\left(
\begin{array}{cccc}
  1 &   3 & 3 & 4 \\
1/2 &   1 & 3 & 4 \\
1/2 &   2 & 1 & 2 \\
  1 & 1/2 & 3 & 1
\end{array}
\right),
\qquad
\theta
=
3.
\end{equation*}
 
Furthermore, we form the matrices
\begin{equation*} 
\bm{B}_{1}
=
\left(
\begin{array}{cccc}
1/3 &   1 &   1 & 4/3 \\
1/6 & 1/3 &   1 & 4/3 \\
1/6 & 2/3 & 1/3 &   1 \\
1/3 & 1/6 &   1 & 1/3
\end{array}
\right),
\qquad
\bm{G}
=
\bm{B}_{1}^{\ast}
=
\left(
\begin{array}{cccc}
  1 &   1 & 4/3 & 4/3 \\
4/9 &   1 & 4/3 & 4/3 \\
1/3 & 2/3 &   1 &   1 \\
1/3 & 2/3 &   1 &   1
\end{array}
\right).
\end{equation*}


Evaluation of the minimal normalized best differentiating and maximal normalized worst differentiating solutions from the matrix $\bm{G}$ gives
\begin{equation*} 
\bm{x}^{\textup{best}}
=
\left(
\begin{array}{c}
  1 \\ 
4/9 \\ 
1/3 \\ 
1/3    
\end{array}
\right)
\approx
\left(
\begin{array}{c}
1.0000 \\ 
0.4444 \\ 
0.3333 \\ 
0.3333    
\end{array}
\right),
\qquad 
\bm{x}^{\textup{worst}}
=
\left(
\begin{array}{c}
  1 \\ 
  1 \\ 
3/4 \\ 
3/4    
\end{array}
\right)
=
\left(
\begin{array}{c}
1.0000 \\ 
1.0000 \\ 
0.7500 \\ 
0.7500    
\end{array}
\right).
\end{equation*} 

We can combine both solutions in a vector where some entries are given in interval form as
\begin{equation*} 
\bm{x}
\approx
\left(
\begin{array}{c}
                1.0000 \\ 
0.4444\ \ldots\ 1.0000 \\ 
0.3333\ \ldots\ 0.7500 \\ 
0.3333\ \ldots\ 0.7500    
\end{array}
\right).
\end{equation*}

\subsection{Lexicographic Ordering Solution}

According to Theorem~\ref{T-minx_xC1x_xCmx-Bxleqx_LO}, the lexicographic ordering solution consists of several steps and proceeds as follows. First, we calculate  
\begin{equation*}
\theta_{1}
=
3,
\end{equation*}
and then obtain the matrices
\begin{equation*}
\bm{B}_{1}
=
\left(
\begin{array}{cccc}
 1/3 & 2/3 &   1 & 4/3 \\
 1/6 & 1/3 &   1 & 2/3 \\
 1/9 & 1/9 & 1/3 &   1 \\
1/12 & 1/6 &   1 & 1/3
\end{array}
\right),
\qquad
\bm{B}_{1}^{\ast}
=
\left(
\begin{array}{cccc}
  1 & 2/3 & 4/3 & 4/3 \\
1/6 &   1 &   1 &   1 \\
1/9 & 1/6 &   1 &   1 \\
1/9 & 1/6 &   1 &   1
\end{array}
\right).
\end{equation*}


To check whether the matrix $B_{1}^{\ast}$ generates a nonunique solution, we find the corresponding best and worst differentiating solution vectors 
\begin{equation*}
\bm{x}_{1}^{\textup{best}}
=
\left(
\begin{array}{c}
  1 \\ 
1/6 \\ 
1/9 \\ 
1/9    
\end{array}
\right),
\qquad 
\bm{x}_{1}^{\textup{worst}}
=
\left(
\begin{array}{c}
  1 \\ 
  1 \\ 
3/4 \\ 
3/4    
\end{array}
\right).
\end{equation*} 

Since these vectors are different, we continue to the next step to evaluate
\begin{equation*}
\theta_{2}
=
2.
\end{equation*} 
 
Further calculations lead to the matrices
\begin{equation*}
\bm{B}_{2}
=
\left(
\begin{array}{cccc}
1/2 &   1 & 3/2 &   2 \\
1/4 & 1/2 &   1 & 3/2 \\
1/6 & 1/4 & 1/2 &   1 \\
1/8 & 1/6 &   1 & 1/2
\end{array}
\right),
\qquad
\bm{B}_{2}^{\ast}
=
\left(
\begin{array}{cccc}
  1 &   1 &   2 &   2 \\
1/4 &   1 & 3/2 & 3/2 \\
1/6 & 1/4 &   1 &   1 \\
1/6 & 1/4 &   1 &   1
\end{array}
\right).
\end{equation*}
 

The best and worst differentiating solution vectors given by $\bm{B}_{2}^{\ast}$ are
\begin{equation*} 
\bm{x}_{2}^{\textup{best}}
=
\left(
\begin{array}{c}
  1 \\ 
1/4 \\ 
1/6 \\ 
1/6    
\end{array}
\right),
\qquad
\bm{x}_{2}^{\textup{worst}}
=
\left(
\begin{array}{c}
  1 \\ 
  1 \\ 
1/2 \\ 
1/2    
\end{array}
\right),
\end{equation*} 
which shows that these vectors do not coincide.

Because the solution is not unique, we perform the next step and calculate
\begin{equation*} 
\theta_{3}
=
6^{1/3}
\approx
1.8171. 
\end{equation*}  

Furthermore, we use the above result to obtain the matrices
\begin{equation*} 
\bm{B}_{3}
=
\left(
\begin{array}{cccc}
 1/\theta_{3} &  3/\theta_{3} &          3/2 &            2 \\
          1/4 &  1/\theta_{3} & 2/\theta_{3} & 4/\theta_{3} \\
1/2\theta_{3} & 1/2\theta_{3} & 1/\theta_{3} &            1 \\
1/3\theta_{3} &           1/6 &            1 & 1/\theta_{3}
\end{array}
\right),
\quad
\bm{B}_{3}^{\ast}
=
\left(
\begin{array}{cccc}
            1 & 3/\theta_{3} &  2\theta_{3} &  2\theta_{3} \\
 \theta_{3}/3 &            1 & 4/\theta_{3} & 4/\theta_{3} \\
1/2\theta_{3} & \theta_{3}/4 &            1 &            1 \\
1/2\theta_{3} & \theta_{3}/4 &            1 &            1
\end{array}
\right).
\end{equation*} 


After evaluation of the best and worst solutions from $\bm{B}_{3}^{\ast}$, we have  
\begin{equation*} 
\bm{x}_{3}^{\textup{best}}
=
\left(
\begin{array}{c}
            1 \\ 
 \theta_{3}/3 \\ 
1/2\theta_{3} \\ 
1/2\theta_{3}    
\end{array}
\right),
\qquad 
\bm{x}_{3}^{\textup{worst}}
=
\left(
\begin{array}{c}
            1 \\ 
 \theta_{3}/3 \\ 
1/2\theta_{3} \\ 
1/2\theta_{3}    
\end{array}
\right).
\end{equation*} 

Since both solutions coincide, we complete the procedure. As the unique final solution, we take the vector
\begin{equation*} 
\bm{x}
=
\left(
\begin{array}{c}
            1 \\ 
 \theta_{3}/3 \\ 
1/2\theta_{3} \\ 
1/2\theta_{3}    
\end{array}
\right)
\approx
\left(
\begin{array}{c}
1.0000 \\ 
0.6057 \\ 
0.2752 \\ 
0.2752    
\end{array}
\right).
\end{equation*}

\subsection{Lexicographic Max-Ordering Solution}

At the first step of this solution, we set $I_{0}=\{1,2,3,4\}$ and then obtain
\begin{equation*} 
\bm{A}_{1}
=
\left(
\begin{array}{cccc}
  1 &   3 & 3 & 4 \\
1/2 &   1 & 3 & 4 \\
1/2 &   2 & 1 & 2 \\
  1 & 1/2 & 3 & 1
\end{array}
\right),
\qquad
\theta_{1}
=
3.
\end{equation*} 
 
Furthermore, as in the max-ordering solution, we calculate the matrices
\begin{equation*} 
\bm{B}_{1}
=
\left(
\begin{array}{cccc}
1/3 &   1 &   1 & 4/3 \\
1/6 & 1/3 &   1 & 4/3 \\
1/6 & 2/3 & 1/3 &   1 \\
1/3 & 1/6 &   1 & 1/3
\end{array}
\right),
\qquad 
\bm{B}_{1}^{\ast}
=
\left(
\begin{array}{cccc}
  1 &   1 & 4/3 & 4/3 \\
4/9 &   1 & 4/3 & 4/3 \\
1/3 & 2/3 &   1 &   1 \\
1/3 & 2/3 &   1 &   1
\end{array}
\right),
\end{equation*}  
and then obtain the best and worst differentiating vectors
\begin{equation*}  
\bm{x}_{1}^{\textup{best}}
=
\left(
\begin{array}{c}
  1 \\ 
4/9 \\ 
1/3 \\ 
1/3    
\end{array}
\right),
\qquad
\bm{x}_{1}^{\textup{worst}}
=
\left(
\begin{array}{c}
  1 \\ 
  1 \\ 
3/4 \\ 
3/4 \\ 
\end{array}
\right).
\end{equation*} 

We observe that these vectors do not coincide and further calculate
\begin{equation*} 
\theta_{11}
=
3,
\qquad
\theta_{12}
=
2,
\qquad
\theta_{13}
=
8/3,
\qquad
\theta_{14}
=
8/3.
\end{equation*} 
 
Since $\theta_{12},\theta_{13},\theta_{14}<\theta_{1}$, we have the index set $I_{1}=\{2,3,4\}$. 

We turn to the next step and use the set $I_{1}$ to calculate
\begin{equation*} 
\bm{A}_{2}
=
\left(
\begin{array}{cccc}
  1 &   3 & 3 & 4 \\
1/2 &   1 & 2 & 4 \\
1/2 &   2 & 1 & 2 \\
  1 & 1/3 & 1 & 1
\end{array}
\right),
\qquad
\theta_{2}
=
8^{1/2}
\approx
2.8284. 
\end{equation*}  

To describe the solution set for this step, we form the matrices
\begin{equation*} 
\bm{B}_{2}
=
\left(
\begin{array}{cccc}
 1/\theta_{2} & 3/\theta_{2} & 3/\theta_{2} & 4/\theta_{2} \\
1/2\theta_{2} & 1/\theta_{2} &            1 & 4/\theta_{2} \\
1/2\theta_{2} & 2/\theta_{2} & 1/\theta_{2} &            1 \\
 1/\theta_{2} &          1/6 &            1 & 1/\theta_{2}
\end{array}
\right),
\quad
\bm{B}_{2}^{\ast}
=
\left(
\begin{array}{cccc}
           1 & 3/\theta_{2} &          3/2 &          3/2 \\
         1/2 &            1 & 4/\theta_{2} & 4/\theta_{2} \\
1/\theta_{2} & 2/\theta_{2} &            1 &            1 \\
1/\theta_{2} & 2/\theta_{2} &            1 &            1
\end{array}
\right)
\end{equation*} 
and then find the vectors
\begin{equation*}  
\bm{x}_{2}^{\textup{best}}
=
\left(
\begin{array}{c}
           1 \\ 
         1/2 \\ 
1/\theta_{2} \\ 
1/\theta_{2}    
\end{array}
\right),
\qquad 
\bm{x}_{2}^{\textup{worst}}
=
\left(
\begin{array}{c} 
           1 \\ 
\theta_{2}/3 \\ 
         2/3 \\ 
         2/3    
\end{array}
\right).
\end{equation*} 

Next, we calculate the minimums
\begin{equation*} 
\theta_{22}
=
3\cdot8^{1/2}/4,
\qquad 
\theta_{23}
=
8^{1/2},
\qquad 
\theta_{24}
=
8^{1/2},
\end{equation*} 
which leads to the index set $I_{2}=\{2\}$.
 
According to the set $I_{2}$, we define
\begin{equation*} 
\bm{A}_{3}
=
\left(
\begin{array}{cccc}
1   & 2   & 3   & 4 \\
1/2 & 1   & 2   & 3 \\
1/3 & 1/2 & 1   & 2 \\
1/4 & 1/3 & 1/2 & 1
\end{array}
\right),
\qquad
\theta_{3}
=
3\cdot8^{1/2}/4
\approx
2.1213. 
\end{equation*} 
 
Furthermore, we calculate the matrices
\begin{equation*} 
\bm{B}_{3}
=
\left(
\begin{array}{cccc}
 1/\theta_{3} & \theta_{3}/2 & 4/\theta_{3} & 4/\theta_{3} \\
 \theta_{3}/9 & 1/\theta_{3} &            1 & 3/\theta_{3} \\
\theta_{3}/12 & \theta_{3}/3 & 1/\theta_{3} &            1 \\
 \theta_{3}/6 &          1/6 &            1 & 1/\theta_{3}
\end{array}
\right),
\quad
\bm{B}_{3}^{\ast}
=
\left(
\begin{array}{cccc}
           1 &          4/3 & \theta_{3}/2 & 4/\theta_{3} \\
         1/2 &            1 & 3/\theta_{3} & 3/\theta_{3} \\
\theta_{3}/6 & \theta_{3}/3 &            1 &            1 \\
\theta_{3}/6 & \theta_{3}/3 &            1 &            1
\end{array}
\right).
\end{equation*}
 
 
The best and worst differentiating vectors are given by
\begin{equation*} 
\bm{x}_{3}^{\textup{best}}
=
\left(
\begin{array}{c}
           1 \\ 
         1/2 \\ 
\theta_{3}/6 \\ 
\theta_{3}/6    
\end{array}
\right)
\approx
\left(
\begin{array}{c}
1.0000 \\ 
0.5000 \\ 
0.3536 \\ 
0.3536    
\end{array}
\right),
\qquad 
\bm{x}_{3}^{\textup{worst}}
=
\left(
\begin{array}{c}
           1 \\ 
         3/4 \\ 
\theta_{3}/4 \\ 
\theta_{3}/4    
\end{array}
\right)
\approx
\left(
\begin{array}{c}
1.0000 \\ 
0.7500 \\ 
0.5303 \\ 
0.5303    
\end{array}
\right).
\end{equation*}  

Since at this step we have $I_{3}=\emptyset$, the procedure terminates. We can couple both obtained vectors into one vector with interval entries as
\begin{equation*} 
\bm{x}
\approx
\left(
\begin{array}{c}
                1.0000 \\ 
0.5000\ \ldots\ 0.7500 \\ 
0.3536\ \ldots\ 0.5303 \\ 
0.3536\ \ldots\ 0.5303    
\end{array}
\right).
\end{equation*}  

To conclude we observe that all solutions result in the same preference order of alternatives in the form $(1)\succ(2)\succ(3)\equiv(4)$, whereas the individual ratings of alternatives may differ. We note that this order is in agreement with the above conclusions based on straightforward analysis of pairwise comparison matrices, and satisfies the constraint imposed on the ratings.

\section{Solution to Vacation Plan Problem}
\label{S-SVPP}

As another example which offers a potential to consider the proposed approach in line with existing methods, we solve the problem of selecting a plan for vacation from \cite{Saaty1977Scaling}. The aim is to choose a destination for vacation trip from Philadelphia among the alternatives $\mathbf{S}$: short trips (i.e., New York, Washington, Atlantic City, New Hope, etc.), $\mathbf{Q}$: Quebec, $\mathbf{D}$: Denver, $\mathbf{C}$: California. These places are evaluated in terms of the following criteria: (1) cost of the trip from Philadelphia, (2) sight-seeing opportunities, (3) entertainment (doing things), (4) way of travel, (5) eating places. 

The results of pairwise comparison of criteria are given by the matrix
\begin{equation*}
\bm{C}_{0}
=
\left(
\begin{array}{ccccc}
1 & 1/5 & 1/5 & 1 & 1/3 \\
5 & 1 & 1/5 & 1/5 & 1 \\
5 & 5 & 1 & 1/5 & 1 \\
1 & 5 & 5 & 1 & 5 \\
3 & 1 & 1 & 1/5 & 1
\end{array}
\right).
\end{equation*}

The pairwise comparison matrices of places with respect to the criteria are as follows:
\begin{gather*}
\bm{C}_{1}
=
\left(
\begin{array}{cccc}
  1 &   3 &   7 & 9 \\
1/3 &   1 &   6 & 7 \\
1/7 & 1/6 &   1 & 3 \\
1/9 & 1/7 & 1/3 & 1
\end{array}
\right),
\qquad
\bm{C}_{2}
=
\left(
\begin{array}{cccc}
1 & 1/5 & 1/6 & 1/4 \\
5 &   1 &   2 & 4 \\
6 & 1/2 &   1 & 6 \\
4 & 1/4 & 1/6 & 1
\end{array}
\right),
\\
\bm{C}_{3}
=
\left(
\begin{array}{cccc}
  1 & 7 & 7 & 1/2 \\
1/7 & 1 & 1 & 1/7 \\
1/7 & 1 & 1 & 1/7 \\
  2 & 7 & 7 & 1
\end{array}
\right),
\qquad
\bm{C}_{4}
=
\left(
\begin{array}{cccc}
  1 &   4 & 1/4 & 1/3 \\
1/4 &   1 & 1/2 & 3 \\
  4 &   2 &   1 & 3 \\
  3 & 1/3 & 1/3 & 1
\end{array}
\right),
\\
\bm{C}_{5}
=
\left(
\begin{array}{cccc}
  1 &   1 & 7 & 4 \\
  1 &   1 & 6 & 3 \\
1/7 & 1/6 & 1 & 1/4 \\
1/4 & 1/3 & 4 & 1
\end{array}
\right).
\end{gather*}

To solve the problem, the analytical hierarchy process method is applied in \cite{Saaty1977Scaling}, which provides the following order of alternatives: $\mathbf{S}\succ\mathbf{D}\succ\mathbf{C}\succ\mathbf{Q}$. Another solution based on the log-Chebyshev approximation in \cite{Krivulin2019Tropical} yields a different order $\mathbf{C}\succeq\mathbf{S}\succ\mathbf{D}\succeq\mathbf{Q}$.
Below, we show how the new approach can be used to obtain solutions according to various principles of optimality.

\subsection{Max-Ordering Solution}

We start with the max-ordering solution given by Theorem~\ref{T-minx_xC1x_xCmx-Bxleqx_MO}. Observing that $\bm{B}_{0}=\bm{B}=\bm{0}$, we calculate
\begin{equation*}
\bm{A}
=
\left(
\begin{array}{cccc}
1 & 7 & 7 & 9 \\
5 & 1 & 6 & 7 \\
6 & 2 & 1 & 6 \\
4 & 7 & 7 & 1
\end{array}
\right),
\qquad
\theta
=
3\cdot14^{1/3}
\approx
7.2304, 
\end{equation*}

Next, we form the matrices
\begin{equation*} 
\bm{B}_{1}
=
\theta^{-1}
\left(
\begin{array}{cccc}
1 & 7 & 7 & 9 \\
5 & 1 & 6 & 7 \\
6 & 2 & 1 & 6 \\
4 & 7 & 7 & 1
\end{array}
\right),
\qquad
\bm{G}
=
\bm{B}_{1}^{\ast}
=
\left(
\begin{array}{cccc}
       1 & \theta/6 &   \theta/6 & 9/\theta \\
     7/9 &        1 & 7\theta/54 & 7/\theta \\
6/\theta &        1 &          1 & \theta/7 \\
\theta/9 & 7/\theta &   7/\theta & 1
\end{array}
\right).
\end{equation*}

The minimal normalized best differentiating and maximal normalized worst differentiating solutions obtained from $\bm{G}$ takes the form
\begin{equation*} 
\bm{x}^{\textup{best}}
=
\left(
\begin{array}{c}
1 \\
7/9 \\
6/\theta \\
\theta/9
\end{array}
\right)
\approx
\left(
\begin{array}{c}
1.0000 \\ 
0.7778 \\ 
0.8298 \\ 
0.8034 \\ 
\end{array}
\right),
\qquad 
\bm{x}^{\textup{worst}}
=
\left(
\begin{array}{c}
1 \\
6/\theta \\
6/\theta \\
\theta/9
\end{array}
\right)
\approx
\left(
\begin{array}{c}
1.0000 \\ 
0.8298 \\ 
0.8298 \\ 
0.8034 
\end{array}
\right),
\end{equation*} 
and specify the respective orders $\mathbf{S}\succ\mathbf{D}\succ\mathbf{C}\succ\mathbf{Q}$ and $\mathbf{S}\succ\mathbf{D}\equiv\mathbf{Q}\succ\mathbf{C}$.

Both solutions can be written as the vector
\begin{equation*} 
\bm{x}
\approx
\left(
\begin{array}{c}
                1.0000 \\ 
0.7778\ \ldots\ 0.8298 \\ 
                0.8298 \\ 
                0.8034 
\end{array}
\right).
\end{equation*} 

The obtained solutions result in the order $\mathbf{S}\succ\mathbf{D}\succeq\mathbf{C}\parallel\mathbf{Q}$, where the alternatives $\mathbf{C}$ and $\mathbf{Q}$ cannot be uniquely ordered.

\subsection{Lexicographic Ordering Solution}

In order to obtain a lexicographic ordering solution, we first need to rank the criteria from the pairwise comparison matrix $\bm{C}_{0}$. By comparing the rows of the matrix $\bm{C}_{0}$, one can expect that the most important is criterion $4$ and the second is $3$. Next come criteria $2$ and $5$, and finally $1$ as the least important criterion. These considerations are confirmed by the result in \cite{Saaty1977Scaling}, where a priority vector is calculated from $\bm{C}_{0}$ by using the principal eigenvector method, which puts the criteria into the order $4,3,2,5,1$.

To simplify the application of the lexicographic ordering based solutions in what follows, we renumber the criteria in the problem according to their importance. Specifically, we assign the number $1$ one to criterion $4$, the number $2$ to criterion $3$ and so on. We use the new numbers of criteria to rename the pairwise comparison matrices for alternatives as follows:
\begin{gather*}
\bm{C}_{1}
=
\left(
\begin{array}{cccc}
  1 &   4 & 1/4 & 1/3 \\
1/4 &   1 & 1/2 & 3 \\
  4 &   2 &   1 & 3 \\
  3 & 1/3 & 1/3 & 1
\end{array}
\right),
\qquad
\bm{C}_{2}
=
\left(
\begin{array}{cccc}
  1 & 7 & 7 & 1/2 \\
1/7 & 1 & 1 & 1/7 \\
1/7 & 1 & 1 & 1/7 \\
  2 & 7 & 7 & 1
\end{array}
\right),
\\
\bm{C}_{3}
=
\left(
\begin{array}{cccc}
1 & 1/5 & 1/6 & 1/4 \\
5 &   1 &   2 & 4 \\
6 & 1/2 &   1 & 6 \\
4 & 1/4 & 1/6 & 1
\end{array}
\right),
\qquad
\bm{C}_{4}
=
\left(
\begin{array}{cccc}
  1 &   1 & 7 & 4 \\
  1 &   1 & 6 & 3 \\
1/7 & 1/6 & 1 & 1/4 \\
1/4 & 1/3 & 4 & 1
\end{array}
\right),
\\
\bm{C}_{5}
=
\left(
\begin{array}{cccc}
  1 &   3 &   7 & 9 \\
1/3 &   1 &   6 & 7 \\
1/7 & 1/6 &   1 & 3 \\
1/9 & 1/7 & 1/3 & 1
\end{array}
\right).
\end{gather*}

We now solve the problem by applying Theorem~\ref{T-minx_xC1x_xCmx-Bxleqx_LO}. We first calculate  
\begin{equation*}
\theta_{1}
=
36^{1/3}
\approx
3.3019, 
\end{equation*}
and then construct the matrices
\begin{equation*}
\bm{B}_{1}
=
\theta_{1}^{-1}
\left(
\begin{array}{cccc}
  1 &   4 & 1/4 & 1/3 \\
1/4 &   1 & 1/2 & 3 \\
  4 &   2 &   1 & 3 \\
  3 & 1/3 & 1/3 & 1
\end{array}
\right),
\qquad
\bm{B}_{1}^{\ast}
=
\left(
\begin{array}{cccc}
           1 &  4/\theta_{1} & \theta_{1}/18 & \theta_{1}/3 \\
\theta_{1}/4 &             1 & 1/2\theta_{1} & 3/\theta_{1} \\
4/\theta_{1} & 4\theta_{1}/9 &             1 & 4/3 \\
3/\theta_{1} &  \theta_{1}/3 &           1/6 & 1
\end{array}
\right).
\end{equation*}


Calculation of the best and worst differentiating solutions from $\bm{B}_{1}^{\ast}$ yields
\begin{equation*}
\bm{x}_{1}^{\textup{best}}
=
\left(
\begin{array}{c}
\theta_{1}/18 \\ 
1/2\theta_{1} \\ 
1 \\ 
1/6 
\end{array}
\right),
\qquad 
\bm{x}_{1}^{\textup{worst}}
=
\left(
\begin{array}{c}
\theta_{1}/4 \\ 
9/4\theta_{1} \\ 
1 \\ 
3/4 
\end{array}
\right).
\end{equation*} 

Observing that the obtained vectors do not coincide, we further evaluate
\begin{equation*}
\theta_{2}
=
28/3
\approx
9.3333, 
\end{equation*} 
and find the matrices
\begin{equation*}
\bm{B}_{2}
=
\left(
\begin{array}{cccc}
 1/\theta_{1} & 4/\theta_{1} &           3/4 & 1/3\theta_{1} \\
1/4\theta_{1} & 1/\theta_{1} & 1/2\theta_{1} & 3/\theta_{1} \\
 4/\theta_{1} & 2/\theta_{1} &  1/\theta_{1} & 3/\theta_{1} \\
 3/\theta_{1} &          3/4 &           3/4 & 1/\theta_{1}
\end{array}
\right),
\qquad
\bm{B}_{2}^{\ast}
=
\left(
\begin{array}{cccc}
           1 &  4/\theta_{1} &  \theta_{1}/4 & \theta_{1}/3 \\
\theta_{1}/4 &             1 & 9/4\theta_{1} & 3/\theta_{1} \\
4/\theta_{1} & 4\theta_{1}/9 &             1 & 4/3 \\
3/\theta_{1} &  \theta_{1}/3 &           3/4 & 1
\end{array}
\right).
\end{equation*}
 

Further calculation shows that the best and worst differentiating solutions given by $\bm{B}_{2}^{\ast}$ coincide and thus provide a unique solution to the problem as
\begin{equation*} 
\bm{x}
=
\bm{x}_{2}^{\textup{best}}
=
\bm{x}_{2}^{\textup{worst}}
=
\left(
\begin{array}{c}
\theta_{1}/4 \\ 
9/4\theta_{1} \\ 
1 \\ 
3/4 \\ 
\end{array}
\right)
\approx
\left(
\begin{array}{c}
0.8255 \\ 
0.6814 \\ 
1.0000 \\ 
0.7500 
\end{array}
\right).
\end{equation*} 

The obtained solution gives the order $\mathbf{D}\succ\mathbf{S}\succ\mathbf{C}\succ\mathbf{Q}$.

\subsection{Lexicographic Max-Ordering Solution}

We now apply Theorem~\ref{T-minx_xC1x_xCmx-Bxleqx_LMO} to obtain the lexicographic max-ordering solution. We set $I_{0}=\{1,2,3,4\}$ and calculate
\begin{equation*}
\bm{A}_{1}
=
\left(
\begin{array}{cccc}
1 & 7 & 7 & 9 \\
5 & 1 & 6 & 7 \\
6 & 2 & 1 & 6 \\
4 & 7 & 7 & 1
\end{array}
\right),
\qquad
\theta_{1}
=
3\cdot14^{1/3}
\approx
7.2304. 
\end{equation*}

Furthermore, we obtain the matrices
\begin{equation*} 
\bm{B}_{1}
=
\theta^{-1}
\left(
\begin{array}{cccc}
1 & 7 & 7 & 9 \\
5 & 1 & 6 & 7 \\
6 & 2 & 1 & 6 \\
4 & 7 & 7 & 1
\end{array}
\right),
\qquad
\bm{B}_{1}^{\ast}
=
\left(
\begin{array}{cccc}
       1 & \theta/6 &   \theta/6 & 9/\theta \\
     7/9 &        1 & 7\theta/54 & 7/\theta \\
6/\theta &        1 &          1 & \theta/7 \\
\theta/9 & 7/\theta &   7/\theta & 1
\end{array}
\right).
\end{equation*}

The best and worst differentiating solutions are given by the vectors
\begin{equation*} 
\bm{x}^{\textup{best}}
=
\left(
\begin{array}{c}
1 \\
7/9 \\
6/\theta \\
\theta/9
\end{array}
\right)
\approx
\left(
\begin{array}{c}
1.0000 \\ 
0.7778 \\ 
0.8298 \\ 
0.8034 \\ 
\end{array}
\right),
\qquad 
\bm{x}^{\textup{worst}}
=
\left(
\begin{array}{c}
1 \\
6/\theta \\
6/\theta \\
\theta/9
\end{array}
\right)
\approx
\left(
\begin{array}{c}
1.0000 \\ 
0.8298 \\ 
0.8298 \\ 
0.8034 
\end{array}
\right).
\end{equation*} 

Considering that these vectors do not coincide, we further calculate
\begin{equation*} 
\theta_{11}
=
2\theta_{1}/3,
\qquad
\theta_{12}
=
\theta_{1},
\qquad
\theta_{13}
=
\theta_{1},
\qquad
\theta_{14}
=
6,
\qquad
\theta_{15}
=
\theta_{1}.
\end{equation*} 

We observe that $\theta_{11},\theta_{14}<\theta_{1}$ and take the set $I_{1}=\{1,4\}$ to calculate
\begin{equation*} 
\bm{A}_{2}
=
\left(
\begin{array}{cccc}
1 &   4 & 7 & 4 \\
1 &   1 & 6 & 3 \\
4 &   2 & 1 & 3 \\
3 & 1/3 & 4 & 1
\end{array}
\right),
\qquad
\theta_{2}
=
6.
\end{equation*}  

Further calculations yield the matrices
\begin{equation*} 
\bm{B}_{2}
=
\left(
\begin{array}{cccc}
         1/6 & 7/\theta_{1} & 7/6          & 9/\theta_{1} \\
5/\theta_{1} &          1/6 &            1 & 7/\theta_{1} \\
6/\theta_{1} &          1/3 &          1/6 & 6/\theta_{1} \\
4/\theta_{1} & 7/\theta_{1} & 7/\theta_{1} & 1/6
\end{array}
\right),
\quad
\bm{B}_{2}^{\ast}
=
\left(
\begin{array}{cccc}
           1 & \theta_{1}/6 & \theta_{1}/6 & 9/\theta_{1} \\
6/\theta_{1} &            1 &            1 & \theta_{1}/7 \\
6/\theta_{1} &            1 &            1 & \theta_{1}/7 \\
\theta_{1}/9 & 7/\theta_{1} & 7/\theta_{1} &            1
\end{array}
\right).
\end{equation*} 

Evaluation of the best and worst differentiating solutions yields the vector
\begin{equation*}  
\bm{x}
=
\bm{x}_{2}^{\textup{best}}
=
\bm{x}_{2}^{\textup{worst}}
=
\left(
\begin{array}{c}
           1 \\ 
6/\theta_{1} \\ 
6/\theta_{1} \\ 
\theta_{1}/9 
\end{array}
\right)
\approx
\left(
\begin{array}{c}
1.0000 \\ 
0.8298 \\ 
0.8298 \\ 
0.8034 
\end{array}
\right),
\end{equation*} 
which places the alternatives in the order $\mathbf{S}\succ\mathbf{D}\equiv\mathbf{Q}\succ\mathbf{C}$.

A comparison of the results obtained shows that the solutions obtained according to the max-ordering and lexicographic max-ordering principles of optimality and the solution obtained by the analytical hierarchy process method in \cite{Saaty1977Scaling} rank the alternative $\mathbf{S}$ higher than the others. A different result provided by lexicographic ordering is explained by the fact that this technique can overestimate the contribution of pairwise comparisons made according to the most important criterion, and thereby provide little or no consideration of the other comparisons. Finally, we note that the order obtained by the analytical hierarchy process method and the order given by the best differentiating max-ordering solution completely coincide.

\newpage
\section{Conclusion}
\label{S-C}

In this paper, we have considered a decision-making problem of rating alternatives, based on pairwise comparisons under multiple criteria, subject to constraints imposed on ratings. The problem was reduced to constrained log-Chebyshev approximation of pairwise comparison matrices by a common consistent matrix that directly determines the vector of ratings. We have represented the approximation problem as a constrained multiobjective optimization problem in terms of tropical algebra where the addition is defined as maximum. We have applied methods and results of tropical optimization to solve the multiobjective problem according to the max-ordering, lexicographic ordering and lexicographic max-ordering principles of optimality. The proposed technique was illustrated with numerical solutions of some pairwise comparison problems, including the problem of selecting a plan for vacation from \cite{Saaty1977Scaling}.

The obtained results show that the application of the log-Chebyshev approximation in combination with tropical algebra allows one to obtain analytical solutions ready for both formal analysis and straightforward computations. Although the new technique to solve pairwise comparison problems in general involves more computational effort than the existing principal eigenvector and geometric mean methods, the computational complexity of this technique remains of a moderate degree and does not exceed $O(m^{2}n^{5})$ where $m$ is the number of criteria and $n$ is the number of alternatives. At the same time, it offers an advantage over the above two methods that cannot handle constraints in a direct and efficient way, and thus do not provide easy implementation of the optimality principles which involve solving constrained optimization problems.

The application of the log-Chebyshev approximation can yield multiple solutions to the pairwise comparison problem. Although the multiple solutions seem to correspond well to the approximate nature of the model of pairwise comparisons, which in most cases is poorly consistent with the data, the existence of many optimal solutions can make it difficult to find an appropriate decision. To overcome this possible drawback, we have developed a new technique that reduces the multiple solutions to the ``best'' and ``worst'' solutions that can be reasonably representative of the entire solution set and hence simplify the identification and selection of the right decision.

The technique examines the solution vectors of ratings, which are normalized to have the maximum value of $1$. As the best solution, a minimal vector is taken that maximizes the ratio between the maximum and minimum components (the highest and lowest ratings). The worst solution is the maximal vector that minimizes the ratio between the maximum and minimum components. While the worst solution is always unique, there may be more than one best solution, which can be considered as a limitation of the technique. Note however that multiple best solutions (the number of which cannot exceed $n$ either) should be explained as a result of unstable pairwise comparison data and hence indicate unreliable input information, which is difficult to interpret unambiguously. 

The main contribution of the study is twofold. First, we have formulated and investigated new constrained multicriteria decision problems that model complex decision-making processes, but received no attention in the literature. We have proposed an approach based on the log-Chebyshev approximation, which in contrast to other methods in decision making makes it possible to obtain analytical solutions to the problems of interest. Moreover, new formal methods and real-world applications developed to extend the mathematical instruments and broaden the application scope of tropical optimization, present another significant outcome. 

Practical implications of the study are that it suggests new practically applicable solution techniques to supplement and complement existing methods in the situation when these methods are known to produce different and even opposite results. In the case of conflicting results, the new technique can serve to obtain additional arguments in favor or against solutions offered by other methods, and thereby help make more accurate decisions. As another practical consequence, one can consider the ability to solve real-world multicriteria decision problems with constraints, where weights of criteria are not specified or the criteria are differentiated only by ranks. 

Possible lines of further investigation include extension of the results to solve multicriteria problems under different types of constraints and principles of optimality. The development of interval and fuzzy implementations of the proposed solutions is another area of interest for future research.

\section*{Acknowledgments}
The author is very grateful to two anonymous reviewers for their valuable comments and suggestions, which have been incorporated in the revised manuscript.

\bibliographystyle{abbrvurl}

\bibliography{Application_of_tropical_optimization_for_solving_multicriteria_problems_of_pairwise_comparisons_using_log-Chebyshev_approximation}

\begin{thebibliography}{10}

\bibitem{Barzilai1987Consistent}
J.~Barzilai, W.~Cook, and B.~Golany.
\newblock Consistent weights for judgements matrices of the relative importance
  of alternatives.
\newblock {\em Oper. Res. Lett.}, 6(3):131--134, 1987.
\newblock \href {https://doi.org/10.1016/0167-6377(87)90026-5}
  {\path{doi:10.1016/0167-6377(87)90026-5}}.

\bibitem{Bellman1970Decisionmaking}
R.~E. Bellman and L.~A. Zadeh.
\newblock Decision-making in a fuzzy environment.
\newblock {\em Management Sci.}, 17(4):B141--B164, 1970.
\newblock \href {https://doi.org/10.1287/mnsc.17.4.B141}
  {\path{doi:10.1287/mnsc.17.4.B141}}.

\bibitem{Belton1983Shortcoming}
V.~Belton and T.~Gear.
\newblock On a short-coming of {S}aaty's method of analytic hierarchies.
\newblock {\em Omega}, 11(3):228--230, 1983.
\newblock \href {https://doi.org/10.1016/0305-0483(83)90047-6}
  {\path{doi:10.1016/0305-0483(83)90047-6}}.

\bibitem{Benson2009Multiobjective}
H.~P. Benson.
\newblock Multi-objective optimization: {P}areto optimal solutions, properties.
\newblock In C.~A. Floudas and P.~M. Pardalos, editors, {\em Encyclopedia of
  Optimization}, pages 2478--2481. Springer, 2 edition, 2009.
\newblock \href {https://doi.org/10.1007/978-0-387-74759-0_426}
  {\path{doi:10.1007/978-0-387-74759-0_426}}.

\bibitem{Buckley1985Fuzzy}
J.~J. Buckley.
\newblock Fuzzy hierarchical analysis.
\newblock {\em Fuzzy Sets and Systems}, 17(3):233--247, 1985.
\newblock \href {https://doi.org/10.1016/0165-0114(85)90090-9}
  {\path{doi:10.1016/0165-0114(85)90090-9}}.

\bibitem{Buckley1990Multiobjective}
J.~J. Buckley.
\newblock Multiobjective possibilistic linear programming.
\newblock {\em Fuzzy Sets and Systems}, 35(1):23--28, 1990.
\newblock \href {https://doi.org/10.1016/0165-0114(90)90015-X}
  {\path{doi:10.1016/0165-0114(90)90015-X}}.

\bibitem{Carlsson2011Possibility}
C.~Carlsson and R.~Full\'{e}r.
\newblock {\em Possibility for Decision}, volume 270 of {\em Studies in
  Fuzziness and Soft Computing}.
\newblock Springer, Berlin, 2011.
\newblock \href {https://doi.org/10.1007/978-3-642-22642-7}
  {\path{doi:10.1007/978-3-642-22642-7}}.

\bibitem{Choo2004Common}
E.~U. Choo and W.~C. Wedley.
\newblock A common framework for deriving preference values from pairwise
  comparison matrices.
\newblock {\em Comput. Oper. Res.}, 31(6):893--908, 2004.
\newblock \href {https://doi.org/10.1016/S0305-0548(03)00042-X}
  {\path{doi:10.1016/S0305-0548(03)00042-X}}.

\bibitem{Crawford1985Note}
G.~Crawford and C.~Williams.
\newblock A note on the analysis of subjective judgment matrices.
\newblock {\em J. Math. Psych.}, 29(4):387--405, 1985.
\newblock \href {https://doi.org/10.1016/0022-2496(85)90002-1}
  {\path{doi:10.1016/0022-2496(85)90002-1}}.

\bibitem{Dubois1999Computing}
D.~Dubois and P.~Fortemps.
\newblock Computing improved optimal solutions to max-min flexible constraint
  satisfaction problems.
\newblock {\em European J. Oper. Res.}, 118(1):95--126, 1999.
\newblock \href {https://doi.org/10.1016/S0377-2217(98)00307-5}
  {\path{doi:10.1016/S0377-2217(98)00307-5}}.

\bibitem{Dubois1980Fuzzy}
D.~J. Dubois and H.~M. Prade.
\newblock {\em Fuzzy Sets and Systems}, volume 144 of {\em Mathematics in
  Science and Engineering}.
\newblock Academic Press, San Diego, 1980.

\bibitem{Ehrgott2005Multicriteria}
M.~Ehrgott.
\newblock {\em Multicriteria Optimization}.
\newblock Springer, Berlin, 2 edition, 2005.
\newblock \href {https://doi.org/10.1007/3-540-27659-9}
  {\path{doi:10.1007/3-540-27659-9}}.

\bibitem{Elwahed2008Intelligent}
W.~F.~A. El-Wahed.
\newblock Intelligent fuzzy multi-criteria decision making: Review and
  analysis.
\newblock In C.~Kahraman, editor, {\em Fuzzy Multi-Criteria Decision Making:
  Theory and Applications with Recent Developments}, pages 19--50. Springer,
  Boston, MA, 2008.
\newblock \href {https://doi.org/10.1007/978-0-387-76813-7_2}
  {\path{doi:10.1007/978-0-387-76813-7_2}}.

\bibitem{Elsner2004Maxalgebra}
L.~Elsner and P.~{van den Driessche}.
\newblock Max-algebra and pairwise comparison matrices.
\newblock {\em Linear Algebra Appl.}, 385(1):47--62, 2004.
\newblock \href {https://doi.org/10.1016/S0024-3795(03)00476-2}
  {\path{doi:10.1016/S0024-3795(03)00476-2}}.

\bibitem{Elsner2010Maxalgebra}
L.~Elsner and P.~{van den Driessche}.
\newblock Max-algebra and pairwise comparison matrices, {II}.
\newblock {\em Linear Algebra Appl.}, 432(4):927--935, 2010.
\newblock \href {https://doi.org/10.1016/j.laa.2009.10.005}
  {\path{doi:10.1016/j.laa.2009.10.005}}.

\bibitem{Fiedler2006Linear}
M.~Fiedler, J.~Nedoma, J.~Ram\'{i}k, J.~Rohn, and K.~Zimmermann.
\newblock {\em Linear Optimization Problems with Inexact Data}.
\newblock Springer, Boston, MA, 2006.
\newblock \href {https://doi.org/10.1007/0-387-32698-7}
  {\path{doi:10.1007/0-387-32698-7}}.

\bibitem{Gavalec2015Tropical}
M.~Gavalec, Z.~N\v{e}mcov\'{a}, and S.~Sergeev.
\newblock Tropical linear algebra with the {{\L}}ukasiewicz {T}-norm.
\newblock {\em Fuzzy Sets and Systems}, 276:131--148, 2015.
\newblock \href {https://doi.org/10.1016/j.fss.2014.11.008}
  {\path{doi:10.1016/j.fss.2014.11.008}}.

\bibitem{Gavalec2015Decision}
M.~Gavalec, J.~Ram\'{\i}k, and K.~Zimmermann.
\newblock {\em Decision Making and Optimization}, volume 677 of {\em Lecture
  Notes in Economics and Mathematical Systems}.
\newblock Springer, Cham, 2015.
\newblock \href {https://doi.org/10.1007/978-3-319-08323-0}
  {\path{doi:10.1007/978-3-319-08323-0}}.

\bibitem{Golan2003Semirings}
J.~S. Golan.
\newblock {\em Semirings and Affine Equations Over Them}, volume 556 of {\em
  Mathematics and Its Applications}.
\newblock Kluwer Acad. Publ., Dordrecht, 2003.
\newblock \href {https://doi.org/10.1007/978-94-017-0383-3}
  {\path{doi:10.1007/978-94-017-0383-3}}.

\bibitem{Gondran2008Graphs}
M.~Gondran and M.~Minoux.
\newblock {\em Graphs, Dioids and Semirings}, volume~41 of {\em Operations
  Research/ Computer Science Interfaces}.
\newblock Springer, New York, 2008.
\newblock \href {https://doi.org/10.1007/978-0-387-75450-5}
  {\path{doi:10.1007/978-0-387-75450-5}}.

\bibitem{Goto2022Polyad}
H.~Goto and S.~Wang.
\newblock Polyad inconsistency measure for pairwise comparisons matrices:
  Max-plus algebraic approach.
\newblock {\em Oper. Res. Int. J.}, 22(1):401--422, 2022.
\newblock \href {https://doi.org/10.1007/s12351-020-00547-9}
  {\path{doi:10.1007/s12351-020-00547-9}}.

\bibitem{Gursoy2013Analytic}
B.~B. Gursoy, O.~Mason, and S.~Sergeev.
\newblock The analytic hierarchy process, max algebra and multi-objective
  optimisation.
\newblock {\em Linear Algebra Appl.}, 438(7):2911--2928, 2013.
\newblock \href {https://doi.org/10.1016/j.laa.2012.11.020}
  {\path{doi:10.1016/j.laa.2012.11.020}}.

\bibitem{Heidergott2006Maxplus}
B.~Heidergott, G.~J. Olsder, and J.~{van der Woude}.
\newblock {\em Max {P}lus at Work}.
\newblock Princeton Series in Applied Mathematics. Princeton Univ. Press,
  Princeton, NJ, 2006.

\bibitem{Ishizaka2006How}
A.~Ishizaka and M.~Lusti.
\newblock How to derive priorities in {AHP}: A comparative study.
\newblock {\em Cent. Eur. J. Oper. Res.}, 14(4):387--400, 2006.
\newblock \href {https://doi.org/10.1007/s10100-006-0012-9}
  {\path{doi:10.1007/s10100-006-0012-9}}.

\bibitem{Krejci2018Pairwise}
J.~Krej\v{c}\'{\i}.
\newblock {\em Pairwise Comparison Matrices and their Fuzzy Extension}.
\newblock Studies in Fuzziness and Soft Computing. Springer, Cham, 2018.
\newblock \href {https://doi.org/10.1007/978-3-319-77715-3}
  {\path{doi:10.1007/978-3-319-77715-3}}.

\bibitem{Krivulin2014Complete}
N.~Krivulin.
\newblock Complete solution of a constrained tropical optimization problem with
  application to location analysis.
\newblock In P.~H\"{o}fner, P.~Jipsen, W.~Kahl, and M.~E. M\"{u}ller, editors,
  {\em Relational and Algebraic Methods in Computer Science}, volume 8428 of
  {\em Lecture Notes in Comput. Sci.}, pages 362--378. Springer, Cham, 2014.
\newblock \href {https://arxiv.org/abs/1311.2795} {\path{arXiv:1311.2795}},
  \href {https://doi.org/10.1007/978-3-319-06251-8_22}
  {\path{doi:10.1007/978-3-319-06251-8_22}}.

\bibitem{Krivulin2015Extremal}
N.~Krivulin.
\newblock Extremal properties of tropical eigenvalues and solutions to tropical
  optimization problems.
\newblock {\em Linear Algebra Appl.}, 468:211--232, 2015.
\newblock \href {https://arxiv.org/abs/1311.0442} {\path{arXiv:1311.0442}},
  \href {https://doi.org/10.1016/j.laa.2014.06.044}
  {\path{doi:10.1016/j.laa.2014.06.044}}.

\bibitem{Krivulin2015Multidimensional}
N.~Krivulin.
\newblock A multidimensional tropical optimization problem with a non-linear
  objective function and linear constraints.
\newblock {\em Optimization}, 64(5):1107--1129, 2015.
\newblock \href {https://arxiv.org/abs/1303.0542} {\path{arXiv:1303.0542}},
  \href {https://doi.org/10.1080/02331934.2013.840624}
  {\path{doi:10.1080/02331934.2013.840624}}.

\bibitem{Krivulin2015Rating}
N.~Krivulin.
\newblock Rating alternatives from pairwise comparisons by solving tropical
  optimization problems.
\newblock In Z.~Tang, J.~Du, S.~Yin, L.~He, and R.~Li, editors, {\em 2015 12th
  Intern. Conf. on Fuzzy Systems and Knowledge Discovery ({FSKD})}, pages
  162--167. IEEE, 2015.
\newblock \href {https://arxiv.org/abs/1504.00800} {\path{arXiv:1504.00800}},
  \href {https://doi.org/10.1109/FSKD.2015.7381933}
  {\path{doi:10.1109/FSKD.2015.7381933}}.

\bibitem{Krivulin2016Using}
N.~Krivulin.
\newblock Using tropical optimization techniques to evaluate alternatives via
  pairwise comparisons.
\newblock In A.~H. Gebremedhin, E.~G. Boman, and B.~Ucar, editors, {\em 2016
  Proc. 7th {SIAM} Workshop on Combinatorial Scientific Computing}, pages
  62--72. SIAM, Philadelphia, PA, 2016.
\newblock \href {https://arxiv.org/abs/1503.04003} {\path{arXiv:1503.04003}},
  \href {https://doi.org/10.1137/1.9781611974690.ch7}
  {\path{doi:10.1137/1.9781611974690.ch7}}.

\bibitem{Krivulin2017Direct}
N.~Krivulin.
\newblock Direct solution to constrained tropical optimization problems with
  application to project scheduling.
\newblock {\em Comput. Manag. Sci.}, 14(1):91--113, Jan 2017.
\newblock \href {https://arxiv.org/abs/1501.07591} {\path{arXiv:1501.07591}},
  \href {https://doi.org/10.1007/s10287-016-0259-0}
  {\path{doi:10.1007/s10287-016-0259-0}}.

\bibitem{Krivulin2017Tropical}
N.~Krivulin.
\newblock Tropical optimization problems in time-constrained project
  scheduling.
\newblock {\em Optimization}, 66(2):205--224, 2017.
\newblock \href {https://arxiv.org/abs/1502.06222} {\path{arXiv:1502.06222}},
  \href {https://doi.org/10.1080/02331934.2016.1264946}
  {\path{doi:10.1080/02331934.2016.1264946}}.

\bibitem{Krivulin2021Algebraic}
N.~Krivulin.
\newblock Algebraic solution to constrained bi-criteria decision problem of
  rating alternatives through pairwise comparisons.
\newblock {\em Mathematics}, 9(4):303, 2021.
\newblock \href {https://arxiv.org/abs/1911.09700} {\path{arXiv:1911.09700}},
  \href {https://doi.org/10.3390/math9040303} {\path{doi:10.3390/math9040303}}.

\bibitem{Krivulin2019Tropical}
N.~Krivulin and S.~Sergeev.
\newblock Tropical implementation of the {A}nalytical {H}ierarchy {P}rocess
  decision method.
\newblock {\em Fuzzy Sets and Systems}, 377:31--51, 2019.
\newblock \href {https://arxiv.org/abs/1802.01989} {\path{arXiv:1802.01989}},
  \href {https://doi.org/10.1016/j.fss.2018.10.013}
  {\path{doi:10.1016/j.fss.2018.10.013}}.

\bibitem{Krivulin2021Algebraicsolution}
N.~K. Krivulin and S.~A. Gubanov.
\newblock Algebraic solution of a problem of optimal project scheduling in
  project management.
\newblock {\em Vestnik St. Petersburg Univ. Math.}, 54(1):58--68, 2021.
\newblock \href {https://doi.org/10.1134/S1063454121010088}
  {\path{doi:10.1134/S1063454121010088}}.

\bibitem{Li2010Chebyshev}
P.~Li and S.-C. Fang.
\newblock Chebyshev approximation of inconsistent fuzzy relational equations
  with max-${T}$ composition.
\newblock In W.~A. Lodwick and J.~Kacprzyk, editors, {\em Fuzzy Optimization},
  volume 254 of {\em Studies in Fuzziness and Soft Computing}, pages 109--124.
  Springer, Berlin, 2010.
\newblock \href {https://doi.org/10.1007/978-3-642-13935-2_5}
  {\path{doi:10.1007/978-3-642-13935-2_5}}.

\bibitem{Liu2008Multiobjective}
W.~Liu and M.~Liang.
\newblock Multi-objective design optimization of reconfigurable machine tools:
  A modified fuzzy-{C}hebyshev programming approach.
\newblock {\em Int. J. Prod. Res.}, 46(6):1587--1618, 2008.
\newblock \href {https://doi.org/10.1080/00207540600943944}
  {\path{doi:10.1080/00207540600943944}}.

\bibitem{Luc2008Pareto}
D.~T. Luc.
\newblock Pareto optimality.
\newblock In A.~Chinchuluun, P.~M. Pardalos, A.~Migdalas, and L.~Pitsoulis,
  editors, {\em Pareto Optimality, Game Theory and Equilibria}, pages 481--515.
  Springer, New York, 2008.
\newblock \href {https://doi.org/10.1007/978-0-387-77247-9_18}
  {\path{doi:10.1007/978-0-387-77247-9_18}}.

\bibitem{Luhandjula1989Fuzzy}
M.~Luhandjula.
\newblock Fuzzy optimization: An appraisal.
\newblock {\em Fuzzy Sets and Systems}, 30(3):257--282, 1989.
\newblock \href {https://doi.org/10.1016/0165-0114(89)90019-5}
  {\path{doi:10.1016/0165-0114(89)90019-5}}.

\bibitem{Maclagan2015Introduction}
D.~Maclagan and B.~Sturmfels.
\newblock {\em Introduction to Tropical Geometry}, volume 161 of {\em Graduate
  Studies in Mathematics}.
\newblock AMS, Providence, RI, 2015.
\newblock \href {https://doi.org/10.1090/gsm/161} {\path{doi:10.1090/gsm/161}}.

\bibitem{Mazurek2021Numerical}
J.~Mazurek, R.~Perzina, J.~Ram\'{\i}k, and D.~Bartl.
\newblock A numerical comparison of the sensitivity of the geometric mean
  method, eigenvalue method, and best–worst method.
\newblock {\em Mathematics}, 9(5), 2021.
\newblock \href {https://doi.org/10.3390/math9050554}
  {\path{doi:10.3390/math9050554}}.

\bibitem{Nakayama2009Sequential}
H.~Nakayama, Y.~Yun, and M.~Yoon.
\newblock {\em Sequential Approximate Multiobjective Optimization Using
  Computational Intelligence}.
\newblock Vector Optimization. Springer, Berlin, 2009.
\newblock \href {https://doi.org/10.1007/978-3-540-88910-6}
  {\path{doi:10.1007/978-3-540-88910-6}}.

\bibitem{Narasimhan1982Geometric}
R.~Narasimhan.
\newblock A geometric averaging procedure for constructing supertransitive
  approximation to binary comparison matrices.
\newblock {\em Fuzzy Sets and Systems}, 8(1):53--61, 1982.
\newblock \href {https://doi.org/10.1016/0165-0114(82)90029-X}
  {\path{doi:10.1016/0165-0114(82)90029-X}}.

\bibitem{Dinola2015Semiring}
A.~D. Nola and C.~Russo.
\newblock The semiring-theoretic approach to {MV}-algebras: A survey.
\newblock {\em Fuzzy Sets and Systems}, 281:134--154, 2015.
\newblock Special Issue Celebrating the 50th Anniversary of Fuzzy Sets.
\newblock \href {https://doi.org/10.1016/j.fss.2015.08.026}
  {\path{doi:10.1016/j.fss.2015.08.026}}.

\bibitem{Ramesh2013Multiple}
R.~Ramesh and S.~Zionts.
\newblock Multiple criteria decision making.
\newblock In S.~I. Gass and M.~C. Fu, editors, {\em Encyclopedia of Operations
  Research and Management Science}, pages 1007--1013. Springer, Boston, MA,
  2013.
\newblock \href {https://doi.org/10.1007/978-1-4419-1153-7_653}
  {\path{doi:10.1007/978-1-4419-1153-7_653}}.

\bibitem{Ramik2020Pairwise}
J.~Ram\'{\i}k.
\newblock {\em Pairwise Comparisons Method}, volume 690 of {\em Lecture Notes
  in Economics and Mathematical Systems}.
\newblock Springer, Cham, 2020.
\newblock \href {https://doi.org/10.1007/978-3-030-39891-0}
  {\path{doi:10.1007/978-3-030-39891-0}}.

\bibitem{Saaty1977Scaling}
T.~L. Saaty.
\newblock A scaling method for priorities in hierarchical structures.
\newblock {\em J. Math. Psych.}, 15(3):234--281, 1977.
\newblock \href {https://doi.org/10.1016/0022-2496(77)90033-5}
  {\path{doi:10.1016/0022-2496(77)90033-5}}.

\bibitem{Saaty1990Analytic}
T.~L. Saaty.
\newblock {\em The Analytic Hierarchy Process}.
\newblock RWS Publications, Pittsburgh, PA, 2 edition, 1990.

\bibitem{Saaty2013Onthemeasurement}
T.~L. Saaty.
\newblock On the measurement of intangibles: A principal eigenvector approach
  to relative measurement derived from paired comparisons.
\newblock {\em Notices Amer. Math. Soc.}, 60(2):192--208, 2013.
\newblock \href {https://doi.org/10.1090/noti944} {\path{doi:10.1090/noti944}}.

\bibitem{Saaty1984Comparison}
T.~L. Saaty and L.~G. Vargas.
\newblock Comparison of eigenvalue, logarithmic least squares and least squares
  methods in estimating ratios.
\newblock {\em Math. Modelling}, 5(5):309--324, 1984.
\newblock \href {https://doi.org/10.1016/0270-0255(84)90008-3}
  {\path{doi:10.1016/0270-0255(84)90008-3}}.

\bibitem{Saaty1987Uncertainty}
T.~L. Saaty and L.~G. Vargas.
\newblock Uncertainty and rank order in the analytic hierarchy process.
\newblock {\em European J. Oper. Res.}, 32(1):107--117, 1987.
\newblock \href {https://doi.org/10.1016/0377-2217(87)90275-X}
  {\path{doi:10.1016/0377-2217(87)90275-X}}.

\bibitem{Salo1995Preference}
A.~A. Salo and R.~P. H\"{a}m\"{a}l\"{a}inen.
\newblock Preference programming through approximate ratio comparisons.
\newblock {\em European J. Oper. Res.}, 82(3):458--475, 1995.
\newblock \href {https://doi.org/10.1016/0377-2217(93)E0224-L}
  {\path{doi:10.1016/0377-2217(93)E0224-L}}.

\bibitem{Shitov2020Factoring}
Y.~Shitov.
\newblock Factoring a band matrix over a semiring.
\newblock {\em Fuzzy Sets and Systems}, 382:165--171, 2020.
\newblock \href {https://doi.org/10.1016/j.fss.2019.02.004}
  {\path{doi:10.1016/j.fss.2019.02.004}}.

\bibitem{Thurstone1927Law}
L.~L. Thurstone.
\newblock A law of comparative judgment.
\newblock {\em Psychol. Review}, 34(4):273--286, 1927.
\newblock \href {https://doi.org/10.1037/h0070288}
  {\path{doi:10.1037/h0070288}}.

\bibitem{Tran2013Pairwise}
N.~M. Tran.
\newblock Pairwise ranking: Choice of method can produce arbitrarily different
  rank order.
\newblock {\em Linear Algebra Appl.}, 438(3):1012--1024, 2013.
\newblock \href {https://doi.org/10.1016/j.laa.2012.08.028}
  {\path{doi:10.1016/j.laa.2012.08.028}}.

\bibitem{Valverdealbacete2015Spectra}
F.~J. Valverde-Albacete and C.~Pel\'{a}ez-Moreno.
\newblock The spectra of irreducible matrices over completed idempotent
  semifields.
\newblock {\em Fuzzy Sets and Systems}, 271:46--69, 2015.
\newblock \href {https://doi.org/10.1016/j.fss.2014.09.022}
  {\path{doi:10.1016/j.fss.2014.09.022}}.

\bibitem{Laarhoven1983Fuzzy}
P.~J.~M. van Laarhoven and W.~Pedrycz.
\newblock A fuzzy extension of {S}aaty's priority theory.
\newblock {\em Fuzzy Sets and Systems}, 11(1):229--241, 1983.
\newblock \href {https://doi.org/10.1016/S0165-0114(83)80082-7}
  {\path{doi:10.1016/S0165-0114(83)80082-7}}.

\end{thebibliography}

\end{document}